\newtheorem{thm}{Theorem}
\newtheorem{remark}{Remark}
\newtheorem{lemma}[thm]{Lemma}
\newtheorem{prop}[thm]{Proposition}
\newtheorem{cor}[thm]{Corollary}
\newcommand{\beq}{\begin{equation}}
\newcommand{\eeq}{\end{equation}}
\newcommand{\beqn}{\begin{eqnarray}}
\newcommand{\eeqn}{\end{eqnarray}}
\newcommand{\bal}{\begin{align}}
\newcommand{\eal}{\end{align}}
\newcommand{\cO}{c_0}
\newcommand{\cL}{{\mathcal L}}
\newcommand{\bM}{{\mathbb M}}
\newcommand{\M}{{\mathcal M}}
\newcommand{\R}{{\mathbb R}}
\newcommand{\Rp}{{\mathbb R}_+}
\renewcommand{\O}[1]{\mathrm{O}\lb #1\rb}
\def\d{{\mathrm d}}
\def\R{{\mathbb R}}
\newcommand{\RR}{\mathbb{R}}
\def\11{{\mathbbmss 1}}
\def\e{{\mathrm e}}
\newcommand{\eps}{\epsilon}
\def\eps{{\varepsilon}}
\def\l{\lambda}
\def\lam{\lambda}
\def\Ran{{\mathrm{Ran}\;}}
\newcommand{\n}{\nabla}
\newcommand{\p}{\partial}
\newcommand{\grad}{{\rm grad\,}}
\newcommand{\Tr}{\mathrm{Tr}\;}
\newcommand{\scalar}[2]{\langle{#1} \mspace{2mu}, {#2}\rangle}
\newcommand{\ip}[2]{\left\langle #1, #2 \right\rangle}
\def\({\left(}
\def\){\right)}
\newcommand{\lb}{\left(}
\newcommand{\rb}{\right)}
\newcommand{\lsb}{\left[}
\newcommand{\rsb}{\right]}
\newcommand{\norm}[1]{\lVert #1 \rVert}
\newcommand{\lra}[1]{\langle#1\rangle}
\newcommand{\DETAILS}[1]{}
\title{On Blowup in Nonlinear Heat
Equations\thanks{Supported by NSERC under Grant NA7901.}}
\date{November 22, 2011}
\begin{document}

\author{D. Egli\thanks{Department of
Mathematics, University of Toronto, Toronto, Canada.}, Z. Gang\thanks{ETH, Z$\ddot{\text{u}}$rich.}, W. Kong$^\dagger$, I.M.
Sigal$^\dagger$}

\maketitle

\begin{abstract}
  We establish the asymptotics of blowup for nonlinear heat equations with superlinear power nonlinearities in arbitrary dimensions and we estimate the remainders.
\end{abstract}

\section{Introduction} \label{SEC:Intro}
In this paper we study the blowup problem for the $n$-dimensional
nonlinear heat equation (or the reaction-diffusion equation) 
\begin{equation}\label{NLH}
\left\{\begin{array}{lll}
\p_t u&=&\Delta u+|u|^{p-1}u\\
u(x,0)&=&u_{0}(x)
\end{array}\right.
\end{equation} with $p>1$. Here $u:\R^n\times\R^{+}\rightarrow\R$.
Eq. \eqref{NLH} arises in the problem of heat flow, or, more generally, in the problems involving diffusion, 
and is a model for a large class of nonlinear parabolic equations, which are ubiquitous in mathematics and its applications. 

We will deal, without mentioning it, with weak solutions of Eq. (\ref{NLH}) in the sense detailed in the next section. The local existence of such solutions is well known (see, e.g.\ \cite{Ball} for the Sobolev spaces $H^\alpha$, $0\le\alpha<2$) and is presented for readers' convenience in the next section. These solutions can be shown to be classical for $t>0.$

For some data $u_{0}(x)$, the solutions $u(x,t)$ might blow up in finite
time $t^*>0$, 
i.e. they exist in $L^\infty$ for $[0,t^*)$ and
$\sup_{x}|u(x,t)|\rightarrow\infty$ as $t\rightarrow t^*$.
Thus, two key problems about \eqref{NLH} are
\begin{enumerate}
 \item Describe initial conditions for which solutions of Eq.
\eqref{NLH} blow up in finite time;
 \item Describe the blowup profile of such solutions.
\end{enumerate}

It is expected (see e.g.\ \cite{BrKu}) that the (stable) blowup profile is
universal---it is independent of lower power perturbations of the
nonlinearity and of initial conditions within certain spaces.

The following key properties of equation \eqref{NLH} 
 elucidate important features of the 
results we discuss below:
\begin{itemize}
\item \eqref{NLH} is invariant
with respect to the scaling transformation,
\begin{equation}\label{rescale}
u(x,t)\rightarrow \lambda^{\frac{2}{p-1}} u(\lambda x,\lambda^{2} t)
\end{equation} for any constant $\lambda>0,$ i.e. if
$u(x,t)$ is a solution, so is $\lambda^{\frac{2}{p-1}}u(\lambda
x,\lambda^{2}t).$
\item \eqref{NLH} has $x-$independent (homogeneous)
solutions:
\begin{equation}\label{uhom}
u_{\rm hom}=[u_{0}^{-p+1}-(p-1)t]^{-\frac{1}{p-1}}.
\end{equation}
\end{itemize}

Solutions \eqref{uhom} blow up in finite time $t^* = \lb (p-1)u_0^{p-1}\rb^{-1}$ for $p>1$.
The linearization of \eqref{NLH} around $u_{\rm hom}$ shows that the
solution $u_{\rm hom}$ is unstable.  Moreover, it is shown in \cite{GK1}
that if either $n\le 2$ or $p\le (n+2)/(n-2)$, then the equation
\eqref{NLH} has no other self-similar solutions of the form
$(T-t)^{-\frac{1}{p-1}}\phi\lb x/\sqrt{T-t}\rb$, $\phi\in
L^{\infty}$, besides $u_{\rm hom}$.

We consider \eqref{NLH} with initial conditions in a certain neighbourhood of the homogeneous solution, which have, modulo a small perturbation, a maximum at the
origin, are slowly varying near the origin and are sufficiently
small, but not necessarily vanishing, for large $|x|$.
We show that the solutions of \eqref{NLH} for
such initial conditions blowup at a finite time $t^*$ and at some moving point $\zeta(t)$ and we characterize the blowup asymptotics.
\DETAILS{
As it turns
out, the leading term is given by the expression
\begin{equation}
\lambda(t)^\frac{2}{p-1}\lsb\frac{
c(t)}{p-1+\lambda^2(t)(x-\zeta(t)) b(t)
(x-\zeta(t))}\rsb^\frac{1}{p-1}
\label{leading}
\end{equation}
where $b(t)$  a real, symmetric $n\times n$-matrix $b(t)=(b_{ij}(t)), $  $y b y:=\sum_{i,j=1}^n y_i b_{ij}y_j$ for a $n\times n$-matrix $b:=(b_{ij})$ and  the parameters $\lambda(t)$, $b(t)$, $c(t)$ and $\zeta(t)$ obey certain
dynamical equations whose solutions give
\begin{equation}\label{eq:blowdy}
\begin{array}{lll}
\lambda(t)&=(t^*-t)^{-\frac{1}{2}}(1+o(1))\\
b(t)&=\frac{(p-1)^{2}}{4p|\ln|t^*-t||}(I+O(\frac{1}{|\ln|t^*-t||^{1/2}}))\\
c(t)&=1-\frac{p-1}{2p|\ln|t^*-t||}(1+O(\frac{1}{\ln|t^*-t|}))\\
\zeta(t)&=O(1).
\end{array}
\end{equation}
with $\lambda(0)=\sqrt{\cO+\frac{2}{p-1}\Tr b(0)}$,
$c_0>0,\ b(0)>0$ depending on the initial datum. Here $o(1)$ is in
$t^*-t$. Moreover, we estimate the remainder, the difference between
$u(x,t)$ and \eqref{leading}. 


In what follows we use}
More precisely, with the standard notation  $y b y:=\sum_{i,j=1}^n y_i b_{ij}y_j$ for a $n\times n$-matrix $b:=(b_{ij})$, $\langle x\rangle:=(1+|x|^2)^{1/2}$ and $f\lesssim g$ for two positive functions $f$ and $g$, satisfying $f\leq Cg$ for some universal constant $C$, we have the following result.
\begin{thm}\label{maintheorem}
Let $b_0:=(b_{0ij})>0$ be a real, symmetric, positive $n\times n$-matrix with $\|b_{0}\| \ll 1$ and $1\leq \cO\leq 4$.
Suppose the initial data $u_{0}\in L^\infty(\R^n)$  for \eqref{NLH}
satisfy the conditions
\begin{equation}\label{INI2}
\left\|\langle x\rangle^{-m}\lb
u_{0}(x)-\lb\frac{\cO}{p-1+x b_{0}x}\rb^{\frac{1}{p-1}}\rb\right\|_{\infty}\le
\delta_{m},
\end{equation}
with $m= 0, 3$,  $0 \leq \delta_{0}\ll 1$ and $\delta_3 =
C\|b_0\|^2$.
Then
\begin{enumerate}
\item[(1)] There exists a time $t^*\in (0,\infty)$ such that the
solution $u(x,t)$ exists on the interval $[0, t^*)$ and blows up at $t^{*}.$
\item[(2)] For
$t< t^*$ there exist unique, $C^1$, positive, real valued functions $\lambda(t)$ and $c(t)$, a $C^1$ $n$-vector valued function $\zeta(t)$, and a $C^1$ $n\times n$-symmetric-matrix valued function $b(t)$, with $b(t)\lesssim b(0),
$ such that $u(x,t)$ can be written as
\begin{equation}\label{u-as}
u(x,t)=\lambda^{\frac{2}{p-1}}(t)\big[\big(\frac{c(t)}
{p-1+y b(t)y}\big)^{\frac{1}{p-1}}+\xi(x,t)\big],\end{equation}
where $y:=\lambda(t)(x-\zeta(t))$ 
and the fluctuation part, $\xi,$ admits the estimates 
$\|\langle y\rangle^{-m}\xi(x,t)\|_\infty\lesssim \delta_m(t),$
$m=0, 3$.  Here $\delta_0 (t)=\delta_0 \ll 1$ and $\delta_3 (t)= \|b(t)\|^{2}$.
\item[(3)] The parameters $\lambda(t)$, $b(t)$, $c(t)$ and $\zeta(t)$ obey certain
dynamical equations (with initial conditions $\lambda(0)=\sqrt{\cO+\frac{2}{p-1}\Tr b(0)}$,
$c_0>0,\ b(0)>0$, depending on the initial datum),  whose solutions give
\begin{equation}\label{eq:blowdy}
\begin{array}{lll}
\lambda(t)&=(t^*-t)^{-\frac{1}{2}}(1+o(1))\\
b(t)&=\frac{(p-1)^{2}}{4p|\ln|t^*-t||}(I+O(\frac{1}{|\ln|t^*-t||^{1/2}}))\\
c(t)&=1-\frac{p-1}{2p|\ln|t^*-t||}(1+O(\frac{1}{\ln|t^*-t|}))\\
\zeta(t)&=O(1).
\end{array}
\end{equation}
Here $o(1)$ is in $t^*-t$.
\end{enumerate}
\end{thm}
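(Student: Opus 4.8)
\emph{Sketch of the argument.}\quad The plan is to study \eqref{NLH} in self‑similar variables adapted to a moving, rescaled frame and to run a modulation plus bootstrap argument around the explicit approximate profile in \eqref{u-as}. First I would set $y=\lambda(t)(x-\zeta(t))$, introduce a new time $\tau$ by $d\tau/dt=\lambda^2(t)$, and write $u(x,t)=\lambda^{2/(p-1)}(t)\,v(y,\tau)$. Substituting into \eqref{NLH} produces a rescaled equation of the form
$\p_\tau v=\Delta_y v-\tfrac{a}{2}\,y\cdot\n_y v-\tfrac{a}{p-1}\,v+|v|^{p-1}v+(\text{drift}\propto\dot\zeta)$,
where $a:=2\lambda_\tau/\lambda$ equals $1$ on the exact self‑similar homogeneous solution. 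The function $v_{b,c}(y)=\big(c/(p-1+yby)\big)^{1/(p-1)}$ solves the stationary version of this equation up to an error which is $O(\|b\|^2)$ in the weighted norms used below, and $v_{b,c}\to\kappa:=(p-1)^{-1/(p-1)}$ as $b\to0$, $c\to1$, recovering the constant blowup solution; the parametrization in \eqref{INI2}–\eqref{u-as} is exactly a neighbourhood of this family.

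Next, write $v=v_{b(\tau),c(\tau)}+\xi$ and linearize. The linearized operator is $L:=L_0+1$ with $L_0:=\Delta_y-\tfrac12 y\cdot\n_y$, which on the Gaussian‑weighted $L^2$ has pure point spectrum $\{1-k/2:k=0,1,2,\dots\}$ with Hermite‑type eigenfunctions: one unstable eigenvalue $1$ (constants, $k=0$), $n$ unstable eigenvalues $1/2$ (modes $y_j$, $k=1$), $n(n+1)/2$ marginal eigenvalues $0$ (modes $y_iy_j$, $k=2$), and all the rest $\le-1/2$. The parameters are tuned to absorb exactly the non‑decaying part: $c$ together with $\lambda$ — linked through the normalization $\lambda^2=\cO+\tfrac{2}{p-1}\Tr b$ — carries the $k=0$ projection, $\zeta$ carries the $k=1$ projections, and $b$ carries the $k=2$ projections, since the $(c,\lambda)$‑, $\zeta$‑ and $b$‑derivatives of $v_{b,c}$ span these eigenspaces (modulo lower order). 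Imposing the corresponding orthogonality conditions on $\xi$ defines $C^1$ functions $\lambda,c,\zeta,b$ uniquely for data as in \eqref{INI2}; differentiating the orthogonality conditions in $\tau$ yields the modulation ODEs, in which the leading nonlinearity is quadratic in $b$ and the feedback of $\xi$ enters only through its projections onto the low modes.

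Then I would close an a priori estimate on the remainder. By the orthogonality conditions $\xi$ lies, modulo controllable corrections, in the stable subspace of $L$, where the semigroup $e^{\tau L}$ is contractive with a gain; a Duhamel representation together with a bootstrap over the two weighted sup‑norms $\|\lra{y}^{-m}\xi\|_\infty$, $m=0,3$, then propagates $\|\lra{y}^{0}\xi\|_\infty=O(\delta_0)$ and $\|\lra{y}^{-3}\xi\|_\infty\lesssim\|b(\tau)\|^2$, the latter being controlled by the profile error $O(\|b\|^2)$, the quadratic‑in‑$\xi$ nonlinearity, and the modulation terms; the smallness of $\delta_0$, $\|b_0\|$ and $\delta_3=C\|b_0\|^2$ provides the room to run the bootstrap. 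Feeding these bounds back, the modulation system reduces to leading order to a dissipative scalar–matrix equation $b_\tau\approx-\gamma b^2$ with $\gamma=\gamma(p)>0$, together with $a=1+\tfrac{2}{p-1}\Tr b+O(\cdots)$ and $c_\tau=O(\|b\|^2)$, whose solutions give $b(\tau)=\tfrac{(p-1)^2}{4p\tau}(I+o(1))$, $c(\tau)=1-\tfrac{p-1}{2p\tau}(1+o(1))$ and $\zeta(\tau)=O(1)$; integrating $d\tau/dt=\lambda^2$ with $a\to1$ yields a finite $t^*$, $\tau=|\ln(t^*-t)|(1+o(1))$ and $\lambda(t)=(t^*-t)^{-1/2}(1+o(1))$. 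Substituting back into \eqref{u-as} gives the blowup statement (1), the decomposition and remainder bounds (2), and the asymptotics \eqref{eq:blowdy} in (3).

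The main obstacle is the marginal $k=2$ mode. Its eigenvalue is $0$, so decay of $\xi$ there is not supplied by the linear semigroup but must be extracted from the nonlinear interaction encoded in the $b$‑ODE. One must therefore (i) compute the effective $b$‑dynamics to the precise order that exhibits dissipativity with the exact quadratic coefficient producing the $1/\tau$ rate, and (ii) verify that the back‑reaction of $\xi$ — whose relevant norm is only $O(\|b\|^2)\sim\tau^{-2}$, i.e. of the same size as $b^2$ — enters with a smaller constant, so that the coupled bootstrap on $(b,\xi)$ genuinely closes rather than being merely self‑consistent. Handling this near‑resonance, simultaneously with the slowly varying weights, the $\zeta$‑degeneracy ($\dot\zeta=O(\|b\|)$), and the coupling of $\lambda,c$ to $\Tr b$, is the technically demanding part; the remainder of the argument is parabolic smoothing and bookkeeping.
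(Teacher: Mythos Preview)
Your sketch is the paper's strategy: pass to self-similar variables, decompose $v=V_{ab}+\xi$ with $\xi$ orthogonal to the non-negative Hermite modes, derive modulation ODEs for $(a,b,c,\zeta)$, and bootstrap weighted $L^\infty$ norms of $\xi$ against propagator decay on the stable subspace.

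Two points are understated in a way that matters for actually closing the argument. First, the contractivity of the semigroup on the stable subspace in the norm $\|\langle y\rangle^{-3}\cdot\|_\infty$ is not ``parabolic smoothing and bookkeeping'': the paper needs a second change of variables to freeze the time-dependent coefficient $a(\tau)$, a Feynman--Kac representation of the full propagator, and a careful combinatorial decomposition of the projection $P^\alpha$ into tensor products of one-dimensional pieces to extract the rate $e^{-\alpha r}$; the spectral gap in Gaussian $L^2$ does not transfer to weighted $L^\infty$ for free. Second, the bootstrap at the neutral $k=2$ mode does \emph{not} close by the back-reaction ``entering with a smaller constant''. The feedback of $\xi$ into the $b$-equation is $O(\beta^3)$, not $O(\beta^2)$ with a small coefficient: when you project $\mathcal{L}_{ab}\xi$ onto $y_iy_j$, the orthogonality $\xi\perp y_iy_j$ kills the leading piece and what survives carries the potential correction $\frac{yby}{p-1+yby}=O(\beta)$, giving an extra power of $\beta$. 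That extra power, not a constant comparison, is what makes the $(b,\xi)$ system triangular to leading order and fixes the exact coefficients $\frac{(p-1)^2}{4p}$ and $\frac{p-1}{2p}$ in \eqref{eq:blowdy}; a ``smaller constant'' argument would shift those constants and not yield the stated asymptotics.
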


\noindent\textbf{Remarks.} 1) 
Neither smoothness of initial conditions nor decay at infinity are required. In
particular, the energy 
\begin{equation}\label{eqn:L2Energy}
{\cal E}(u):= \int \(\frac{1}{2}|\nabla u|^2-\frac{1}{p+1}|u|^{p+1}\)\d^nx,
\end{equation}
for such initial conditions might be infinite.

2) The weight in the $L^\infty$-norm for $\xi(x,t)$ 
 comes from the fact that we decompose a solution of (\ref{NLH}) into a leading profile and a fluctuation, with the fluctuation orthogonal to bad (positive or nearly zero) eigenvalues of the linearization around  the leading profile. The weight in question is determined by the eigenfuction of the first good (negative) eigenvalue. (Note that bad eigenvalues reflect the instabilities w.r.\ to the blowup time and center and the shape and size of the blowup profile.)

\DETAILS{\textbf{Note that the function, $v_{ab}:=\lb\frac{2a}{p-1 +  y b y}\rb^\frac{1}{p-1}$, on the r.h.s. of the expression for $u(x,t)$ is the general solution of the equation $a y\cdot\nabla_yv+\frac{2 a}{p-1} v=v^p$. Extend this to more general nonlinearities and to systems with  nonlinearities such as  $\sum_{\alpha:|\alpha|=p} c_\alpha \prod_{i=1}^m u_i^{\alpha_i},\ \alpha=(\alpha_1, \dots, \alpha_m)$.}}

There is rich literature regarding the blowup problem for equation
\eqref{NLH}. We review quickly the relevant results.  Starting with
\cite{Fu}, various criteria for blowup in finite time were derived,
see e.g.\ \cite{Fu,Ball,CHI,EVA,Le1,Le2,OHLM,Qu,Sou,FHV,
FM2}. For example, if $u_{0}\in H^{1}\cap L^{p+1}$
and $\mathcal{E}(u_{0})<0$, 
then it is proved in \cite{Le1} that $\|u(t)\|_2^{2}$ blows up in finite
time $t^*$. By the observation
$$\frac{1}{2}\frac{d}{dt}\|u(t)\|_2^{2}\leq
\|u(t)\|_\infty^{p-1}\|u(t)\|_2^{2}$$ we have that $\|u(t)\|_\infty$
blows up in finite time $t^{**}\leq t^*$ also.  (In this paper, we
denote the norms in the $L^p$ spaces by $\|\cdot\|_p$.)

Blowup at a single point was studied as early as \cite{Wei1} (see also
~\cite{FM2}). The first result on asymptotics of the blowup
for arbitrary dimension $n\geq 1$ was obtained in the pioneering
paper \cite{GK1} where the authors show that under the condition
\begin{equation}\label{eq:boundedness}
|u(x,t)|(t_*-t)^\frac{1}{p-1}\ \mbox{is bounded on}\ B_1\times
(0,t_*),
\end{equation}
where $B_1$ is the unit ball in $\R^n$ centred at the origin, and
either $p\le\frac{n+2}{n-2}$ or $n\le 2$ and assuming blowup takes
place at $x=0$, one has
\begin{equation*}
\displaystyle\lim_{\lambda\rightarrow 0} \lambda^\frac{2}{p-1}
u(\lambda x,t_{*}+\lambda^2(t-t_*) )=\pm
\lb\frac{1}{p-1}\rb^\frac{1}{p-1}(t_*-t)^{-\frac{1}{p-1}}\
\mbox{or}\ 0.
\end{equation*}
This result was further improved in several papers (see e.g.\
\cite{GK2, GK3, HV92, FK, Mer,Vel,FL,
FM1,FM2,BrKu, MZ3, MZ4, MZ5}). A blowup solution
satisfying the bound \eqref{eq:boundedness} is said to be of type
I. This bound was proven under various conditions in \cite{GK2, MZ3,
MZ4, Wei2, GMS}.  Furthermore, the limits of
$H^1$-blowup solutions $u(x,t)$ as $t\uparrow T$, outside
the blowup sets were established in \cite{HV92,FK, Mer,
Vel,FL,FM1,FM2,BrKu, MZ5, FMZ}.

For $p>1$, dimension $n=1$, Herrero and Vel\'{a}zquez \cite{HV2}
(see also ~\cite{FL}) proved that if the initial condition
$u_0$ is continuous, nonnegative, bounded, even, and has only one
local maximum at $0$, and if the corresponding solution blows up,
then
\begin{equation}\label{eq:asy}
\lim_{t\uparrow t^*}(t^*-t)^{\frac{1}{p-1}}u(y((t^*-t)\ln|
t^*-t|)^{\frac{1}{2}},t)=(p-1)^{-\frac{1}{p-1}}[1+\frac{p-1}{4p}y^{2}]^{-\frac{1}{p-1}},
\end{equation}
uniformly on sets $|y|\leq R$ with $R>0$.  Further
extensions of this result are achieved in \cite{HV92, Vel,
FK, FL}.

Later for dimension $n=1$ Bricmont and Kupiainen \cite{BrKu}
constructed a co-dimension 2 submanifold of initial conditions such
that \eqref{eq:asy} is satisfied on the whole domain. More
specifically, given a small function $g$ and a small constant $b>0$,
they find constants $d_{0}$ and $d_{1}$ depending on $g$ and $b$
such that the solution to \eqref{NLH} with the datum
\begin{equation}\label{dis}
u_{0}^{*}(x)
=(p-1+bx^{2})^{-\frac{1}{p-1}}(1+\frac{d_{0}+d_{1}x}{p-1+bx^{2}})^{\frac{1}{p-1}}+g(x)\end{equation}
has the convergence \eqref{eq:asy} uniformly in
$y\in(-\infty,+\infty)$. The result of \cite{BrKu} was generalized
in \cite{MZ1} (see also \cite{GaPo1986}), where it is shown that
there exists a neighborhood $\mathcal{U}$, in the space
$
L^{p+1}\cap H^{1}$, of $u_{0}^{*}$, given in
\eqref{dis}, such that if $u_{0}\in \mathcal{U}$, then the solution $u(x,t)$
blows up in a finite time $t^*$ and satisfies (\ref{eq:asy}) for
$x\in \mathbb{R}$. They conjectured that this asymptotic behavior is
generic for any blowup solution.

For  initial conditions in $L^\infty$ that lead to blowup at a prescribed location and time, $a, T$, respectively, with the blowup profile \eqref{eq:asy}, Merle et al.\ 
(\cite{FMZ,Mer,MZ1,MZ3,MZ4,MZ5}) 
established the stability of the blowup profile 
in any dimension. 

%
%
\DETAILS{The starting point in majority of the above works, which goes back
to Giga and Kohn \cite{GK1}, is passing to the similarity variables
$y:=x/\sqrt{t^*-t}$ and $s:=-\log (t^*-t)$, where $t^*$ is the
blowup time, and to the rescaled function
$w(y,s)=(t^*-t)^\frac{1}{p-1} u(x,t)$.  Then one studies the
resulting equation for $w$:
\begin{equation}\label{eq:freDir}
\p_s w=\p_y^2 w-\frac{1}{2} y\p_y w-\frac{1}{p-1} w+|w|^{p-1}w.
\end{equation}
Most of the works above used relations involving the energy
functional
\begin{equation}\label{eq:energy}
S(w):=\frac{1}{2}   \int \lb |\partial_y w|^2 +\frac{1}{p-1} |w|^2
-\frac{2}{p+1} |w|^{p+1}\rb e^{-\frac{1}{4} y^2}\, dy,
\end{equation}
introduced in \cite{GK1}, and related functionals. In particular,
one uses the relation
\begin{equation}
\p_s S(w)=-  \int |\p_s w|^2 e^{-\frac{1}{4} y^2}\, dy.
\label{eqn:9a}
\end{equation}

Note that Equation (\ref{eq:freDir}) is the gradient system $\p_s
w=-\grad S(u)$ in the metric space ${L^2(e^{-\frac{1}{4}
y^2}\,dy)}$. ($\grad S(u)$ is defined by the equation $ \p
S(u)\xi=\ip{\grad S(u)}{\xi}_{L^2(e^{-\frac{1}{4} y^2}\, dy)}$.)
Hence $S$ decreases under the flow of \eqref{NLH} and so
\eqref{eqn:9a} implies that $\p_s w\rightarrow 0$ as $s\rightarrow
\infty$.}

In \cite{DGSW} precise blowup asymptotics were derived for
\eqref{NLH} in dimension $1$ for even initial conditions. 
Our results extend the results of \cite{DGSW} in two aspects. First, we address the problem of blowup in arbitrary dimensions.
Second, we consider more general, non-symmetric initial conditions, which allow the blowup center to move.

F. Merle (\cite{Mer2}) has informed the last author that asymptotics \eqref{u-as} - \eqref{eq:blowdy}, but without estimates of the remainders, can be derived from \cite{FMZ,FMZ2,GK2, GMS,Ma, MuW, Vel2}.  

Unlike the most of the works above, we do not use the fact that \eqref{NLH} is an $L^2$-gradient system $$\p_t u = -\grad {\cal E}(u),$$
with the energy defined in \eqref{eqn:L2Energy}.
Instead we use 
method of majorants, which allow us to bootstrap our estimates, and strong linear estimates. Hence we expect our analysis can be extended to non-gradient systems.

Also, in contrast all previous works, with exception of \cite{DGSW}, which fix scaling as $\lambda (t)=(T-t)^{-\frac{1}{2}}$, where $T$ is the blowup time, we leave the scaling, $\l(t)$ (and blowup center, shape and size parameters, $b$ and $c$, and time) to be determined by the equation.
%
\DETAILS{Also, unlike all previous works with exception of \cite{DGSW}, which fix ``similarity variables''
\begin{align*}
y=\frac{x-a}{\sqrt{T-t}}\,,\quad s=-\log(T-t),
\end{align*}
our leaves the scaling, $\l(t)$, (and blowup center, shape and size parameters, $b$ and $c$, and time) to be determined by the equation.}
As a result we obtain a dynamical system for the scaling parameter $\l(t)$, as well as for other parameters determining the leading profile, solving which gives the desired scaling law. Hence our approach  is well adapted to detecting 
the scaling dynamics in situations where scaling law is not obvious 
(see e.g. \cite{BOS,OS, RS, RR, KST1, KST2}).  

We 
believe our techniques are sufficiently simple and robust and can more or less straightforwardly be extended to  $p<0$ (collapse, see \cite{Ga}), to several blowup centers, to blowups along spheres and to more general, say polynomial, nonlinearities.
\DETAILS{Let us also remark that for simplicity we consider single-point blowup, for $p>1$. Our results can more or less straightforwardly be extended to the cases
\begin{itemize}
\item $p<0$ (collapse, see \cite{gangcollapse})
\item several blowup centers
\item blowup along spheres.
\end{itemize}}

Our proof is close to the one of \cite{DGSW} but several points are substantially revised and the exposition is simplified. Since the problem is important and our treatment is still simpler than anything presented so far in the literature, we give, for the reader's convenience a complete proof, reproducing some of the results of \cite{DGSW}.

Our approach consists of the following sequence of steps:
\begin{itemize}
\item Passing to blowup variables (Section \ref{Section:BV}).  Given  differentiable functions $z(t)\in \R^{n}$ and $ \lambda(t)>0,$ we pass to new variables as
$$v(y,\tau):=\lambda^{-\frac{2}{p-1}}(t) u(x,t),\ \quad \mbox{where}\  \quad y:=\lambda(t)(x-z(t))-\alpha (t)\ \mbox{and}\ \tau:=\int_{0}^{t}\lambda^{2}(s)ds.$$
Here $\alpha (t)$ satisfies the equation
$\lambda^{-2}\dot \alpha -a\alpha=-\lambda^{-1} \dot z,$
 with $a(t)=\dot{\lambda}(t)/\lambda^3(t)$.
Now $\lambda(t), a(t), z(t)$ and $v(y,\tau)$ are unknowns we have to solve for.
\item Reparametrization of solutions (Section \ref{Section:Reparam}).  The  equation for $v(y,\tau)$, which follows from \eqref{NLH}, has the two-parameter family of approximate solutions
\begin{equation}
v_{ab}:=\lb\frac{2a}{p-1 +  y b y}\rb^\frac{1}{p-1},
\end{equation}
where $b:=(b_{ij}),\ b_{ij}\in\R$, is any real, symmetric $n\times n$-matrix and, recall, $y b y:=\sum_{i,j=1}^n y_i b_{ij}y_j$:
\begin{equation}\label{appr-eq}
\lb\Delta- a y\cdot \n
-\frac{2a}{p-1}\rb v_{ab}+|v_{ab}|^{p-1}v_{ab}\approx 0.
\end{equation}
In what follows we take
$b\ge 0,$ so that $v_{ab}$ is nonsingular.

It will turn out (see below in this outline) that $a$ approaches $1/2$, as $t$ approaches the blowup time, and it will be convenient to replace $v_{ab}$ by
$V_{ a b }(y):=(\frac{a+1/2}{p-1+ y b y})^{\frac{1}{p-1}}$.
We consider  
the manifold
$$\mathcal{M}_{\rm as}:=\{V_{ab}\, |\, a\in \Rp, b\in \R^{n\times n}\}$$ of almost solutions.
We 
 parameterize a solution by a point on the manifold
$\mathcal{M}_{\rm as}$ and a fluctuation (approximately) orthogonal
to this manifold:  
\begin{equation}\label{deco}
v=V_{a b}+\xi,\ \qquad \xi\bot\ T_{V_{a b}}\mathcal{M}_{\rm as}, 
\end{equation}
 in the sense of $L^2(\R^n,\e^{-a|y|^2/2}\d y)$ (large slow moving and small fast moving parts of the solution). 

\item Lyapunov-Schmidt decomposition (Section \ref{Section:Splitting}).
 Plugging the decomposition \eqref{deco} into the equation for $v(y, \tau)$ gives the equation
\begin{equation} \label{xi-eq}
\xi_\tau=-{\mathcal L}_{ab}\xi+{\mathcal N}(\xi,a,b)+{\mathcal F}(a,b)
\end{equation}
where ${\mathcal L}_{ab},\ {\mathcal N}(\xi,a,b)$ and ${\mathcal F}(a,b)$ are the linear operator, the nonlinearity and the source term respectively.

\DETAILS{For technical reasons, it is more convenient to
require the fluctuation to be almost orthogonal to the manifold
$\M_{\rm as}$. More precisely, we require $\xi$ to be orthogonal to the
vectors $\phi^{(ij)}_{a},\, 0\leq i,
j\leq n$, where
$$\phi^{(00)}_{ a}(y):=1,\ 
\phi^{(0i)}_{a}(y)=\phi^{(i0)}_{ a}(y):=\sqrt{a}
y_i ,\
\phi^{(ij)}_{a}(y):=ay_iy_j,\, 1\leq i,
j\leq n,$$
which are almost tangent vectors to the above manifold, provided $b$ is sufficiently small.}

Differentiating the equation \eqref{appr-eq} w.r.\ to  $a$, $ z$ (remember, $ y:=\lambda(t)(x-z(t))$) and $b$, 
 and using that $V_{a b}=(p-1)a \p_a V_{a b}$ and
$y\cdot \n V_{a b}=\frac{1}{p-1}\frac{2yby}{p-1+yby} V_{a b}=\frac{2ayby}{p-1+yby} \p_a V_{a b}$ and $\n V_{a b}=\lam^{-1}\n_z V_{a b}$, we obtain 
\begin{equation}\label{eqn:zeromodes}
\cL_{a b }(\p_a V_{a b})\approx a(1+\frac{yby}{p-1+yby}) \p_a V_{a b},\ \quad
\cL_{a b }(\n_z V_{a b})\approx a \n_z V_{a b},\ \quad
\cL_{a b }(\p_{b_{ij}}V_{a b})\approx  0 .
\end{equation}
Since for $|y|$ bounded and $\| b\|$ small, $V_{a b}\approx \lb\frac{a+1/2}{p-1}\rb^{\frac{\mu}{p-1}}$ and therefore
$$ \p_a V_{a b} \approx \frac{1}{a},\  \quad
\n_{z_j} V_{a b} \approx \lam\mu\sum_j b_{ij}y_j,\ \quad \textrm{and}\ \quad
 \p_{b_{ij}}V_{a b} \approx \mu y_iy_j,$$
where $\mu:=\frac{1}{p-1}\lb\frac{a+1/2}{p-1}\rb^{\frac{1}{p-1}}$. Hence we expect that the linearized operator $\cL_{a b }$ has approximate eigenvalues $2a,\ a$ and $0$ with the corresponding approximate eigenfunctions of $1,\ y_j$ and $y_iy_j$, which are approximate tangent vectors to $\mathcal{M}_{\rm as}$ at $V_{a b}$ spanning $T_{V_{a b}}\mathcal{M}_{\rm as}$.

The first two groups of approximate eigenfunctions are related to the scaling and translation symmetry of the original nonlinear heat equation \eqref{NLH}. The third one can be thought of as related to the symmetry w.r.\ to rotations. 
The approximate eigenfunctions above give the unstable modes in our problem and they will play an important role in our analysis.

Consider now the family $\tilde V_{ bc }(y):=(\frac{c}{p-1+ y b y})^{\frac{1}{p-1}}$, with $c$ an extra parameter, and proceed as above, using the decomposition $v=\tilde V_{ bc } +\xi$, instead of \eqref{deco}. Projecting  the resulting equation for $\xi$ onto approximate
$T_{\tilde V_{ bc }}\mathcal{M}_{\rm as}$, we find the following dynamical system for the parameters $a,b,c$:
\begin{align}
\partial_\tau c&=c(c-2a)-\frac{2}{p-1}\Tr b+\mathrm{Rem}_c(\xi,a,b,c)\\
\partial_\tau b&=(c-2a)b-\frac{2b}{p-1}\Tr b+\frac{4p}{(p-1)^2}b^2+\mathrm{Rem}_b(\xi,a,b,c)\,,
\end{align}
for some remainders $\mathrm{Rem}_c(\xi,a,b,c)$ and $\mathrm{Rem}_b(\xi,a,b,c)$ which are expected to provide higher order corrections. Note that we are free to choose the (time-dependent) additional parameter $c$ at our convenience. From the above equations we read off the equilibria (zeroes of the vector field governing the evolution of the parameters $a, b, c$) as
\begin{align*}
(a,b,c)=(a^*,0,2a^*)\,,
\end{align*}
for any choice of function $a^*$. The fixed point we want the parameters to flow to is
\begin{align*}
(a,b,c)=(\frac{1}{2},0,1)\,.
\end{align*}
One way to achieve this is to fix $c$ as a convex combination of $1$ and $2a$:
$c=\rho+2(1-\rho)a\,,$
for any $\rho\in (0,1)$. Note that the extremal point $\rho=0$ is not a good choice because the equation for $c_\tau$ would lose its leading part driving $a$ and $c$ to the desired fixed point, while $\rho=1$ robs us of an equation for $a_\tau$.
The simplest choice is $\rho=\frac{1}{2}$, so that
\begin{align*}
c=\frac{1}{2}+a \quad \textnormal{and } c-2a=\frac{1}{2}-a\,.
\end{align*}
This is exactly our reason for using $ V_{ ab }(y):=(\frac{a+1/2}{p-1+ y b y})^{\frac{1}{p-1}}$, instead of $v_{ab}:=\lb\frac{2a}{p-1 +  y b y}\rb^\frac{1}{p-1}$.

\item Linear propagator estimates (Section \ref{Section:PropEst}). Using combination of techniques we derive estimates of the propagators generated by the operator ${\mathcal L}_{ab}$ in the norms introduced above.

\item Majorants and bootstrap (Sections \ref{Section:APriori}, \ref{SEC:EstB}, \ref{SEC:EstM1}, \ref{SEC:EstM2}). To control the fluctuations $\xi(\tau),$ we introduce the  estimating functions (families of semi-norms)
\begin{equation*}
M_{k}(T):=\max_{\tau\leq T}
\beta^{-2(2-k)}(\tau)\|\lra{y}^{-3(2-k)}\xi(\tau)\|_{\infty},\ k=1, 2,
\end{equation*}
and similarly for the parameters $b(\tau)$ and $a(\tau)$. Using \eqref{xi-eq} and the linear propagator estimates, we prove inequalities for these estimating functions, which allow us to bootstrap our estimates, starting from very rough ones provided by the local well-posedness. This allows us to propagate our estimates in time.
\end{itemize}

We conclude the introduction by stating without proof the standard result on the local well-posedness of \eqref{NLH}. 
 $W^{s}:=\{u \in L^\infty, (-\Delta)^{s/2}u \in L^\infty\}$. 
 \begin{thm}\label{THM:Local}
 Let $u_0\in L^\infty$.Then there exists $t_*$ such that

 \begin{itemize}
 \item  
 \eqref{NLH} 
 has a unique mild solution in $
 C([0,t_*),L^\infty)$;

 \item $u$ depends continuously on the
 initial condition $u_0$;
 \item Either $t_*=\infty$ or $t_* < \infty$
 and $\|u(t)\|_{\infty}\rightarrow \infty$ as $t\rightarrow t_*$;

 \item If $u_0\in W^{s},\ s\geq 0$, then $\|\p_t u\|_{\infty} \lesssim t^{-\max(1-\frac{s}{2}, 0)}$ as $t\to 0$. In particular, $u\in C^1((0,t^*),L^\infty)$ ($C^1([0,t^*),L^\infty)$ if $s\geq 2$).
 \end{itemize}
 \end{thm}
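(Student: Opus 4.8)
\noindent\textbf{Proof strategy for Theorem~\ref{THM:Local}.}
I would prove this by the standard Duhamel/Picard contraction argument, obtaining uniqueness and continuous dependence from Gronwall, the blowup alternative from a continuation argument, and the last bullet from parabolic smoothing estimates. Rewrite \eqref{NLH} in mild form
$$u(t)=e^{t\Delta}u_0+\int_0^t e^{(t-s)\Delta}\big(|u(s)|^{p-1}u(s)\big)\,ds,$$
where $e^{t\Delta}$ is convolution with the Gaussian kernel, so that $\|e^{t\Delta}f\|_\infty\le\|f\|_\infty$ and $\|(-\Delta)^{\sigma/2}e^{t\Delta}f\|_\infty\lesssim t^{-\sigma/2}\|f\|_\infty$ for $\sigma\ge0$. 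With $R:=2\|u_0\|_\infty$, $f(u):=|u|^{p-1}u$ and $X_{t_*}:=C([0,t_*],L^\infty)$, run Banach's fixed point theorem for the map $\Phi$ defined by the right-hand side on the closed ball of radius $R$ in $X_{t_*}$: from $|f(u)|\le|u|^p$ and $|f(u)-f(v)|\lesssim(|u|^{p-1}+|v|^{p-1})|u-v|$ one gets $\|\Phi(u)(t)\|_\infty\le\|u_0\|_\infty+t_*R^p$ and $\|\Phi(u)(t)-\Phi(v)(t)\|_\infty\lesssim t_*R^{p-1}\|u-v\|_{X_{t_*}}$, so a small $t_*=t_*(\|u_0\|_\infty)$ makes $\Phi$ a self-map and a contraction; strong continuity of the heat semigroup together with dominated convergence puts the fixed point in $X_{t_*}$.

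For uniqueness in all of $X_{t_*}$ and for continuous dependence, take two mild solutions $u,v$ with data $u_0,v_0$ on an interval where both are bounded by some $R'$, estimate the nonlinearity difference as above, and apply Gronwall to get $\|u(t)-v(t)\|_\infty\le e^{C(R')t}\|u_0-v_0\|_\infty$ (setting $u_0=v_0$ also upgrades the in-ball uniqueness to full uniqueness by a standard ``largest interval of coincidence'' argument). Let $t_*$ be the maximal existence time: if $t_*<\infty$ and $\liminf_{t\uparrow t_*}\|u(t)\|_\infty<\infty$, pick $t_k\uparrow t_*$ with $\sup_k\|u(t_k)\|_\infty<\infty$; by the first step the solution starting from $u(t_k)$ exists for a time bounded below uniformly in $k$, so for large $k$ it extends past $t_*$, a contradiction. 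Hence $\|u(t)\|_\infty\to\infty$ as $t\to t_*$.

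For the regularity bullet, differentiate the Duhamel formula and use $\partial_t u=\Delta u+f(u)$, writing $\Delta u(t)=\Delta e^{t\Delta}u_0+\Delta\int_0^t e^{(t-s)\Delta}f(u(s))\,ds$. If $u_0\in W^s$, then $\|\Delta e^{t\Delta}u_0\|_\infty\lesssim t^{-\max(1-s/2,0)}$: for $s<2$ write $\Delta e^{t\Delta}u_0=-(-\Delta)^{(2-s)/2}e^{t\Delta}(-\Delta)^{s/2}u_0$ and use the smoothing bound, while for $s\ge2$ simply $\Delta e^{t\Delta}u_0=e^{t\Delta}\Delta u_0$ is bounded. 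The Duhamel term is the delicate point: $\|\Delta e^{\sigma\Delta}\|_{\infty\to\infty}\lesssim\sigma^{-1}$ is not integrable in $\sigma$ near $0$, so first bootstrap a Hölder bound $\|f(u(s))-f(u(t))\|_\infty\lesssim|t-s|^{\gamma}$ (some $\gamma\in(0,1)$, using the mild equation and the smoothing bounds), then split $f(u(s))=(f(u(s))-f(u(t)))+f(u(t))$: the $f(u(t))$ part contributes $(e^{t\Delta}-I)f(u(t))$ (integrate $\Delta e^{\sigma\Delta}$ in $\sigma$), which is bounded, and the difference part is controlled by $\int_0^t\sigma^{-1+\gamma}\,d\sigma<\infty$. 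Combining gives $\|\partial_t u(t)\|_\infty\lesssim t^{-\max(1-s/2,0)}$ near $t=0$, hence $u\in C^1((0,t^*),L^\infty)$, and $C^1([0,t^*),L^\infty)$ when $s\ge2$ since then the exponent vanishes. I expect the whole difficulty to sit in this last step: the contraction, the Gronwall uniqueness and stability, and the continuation are routine, and the only real subtlety is extracting integrability from the non-integrable kernel $\Delta e^{\sigma\Delta}$ via parabolic Hölder regularity while keeping the exponent $\max(1-s/2,0)$ sharp.
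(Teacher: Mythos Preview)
The paper does not prove this theorem: it is introduced with the words ``We conclude the introduction by stating \emph{without proof} the standard result on the local well-posedness of \eqref{NLH},'' so there is no argument in the paper to compare against. Your outline is exactly the standard proof one would expect (Picard contraction on the Duhamel formula, Gronwall for uniqueness and continuous dependence, continuation for the blow-up alternative, parabolic smoothing for the last item), and is in line with what the paper has in mind when it cites \cite{Ball}.

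One small technical point worth flagging: the heat semigroup $e^{t\Delta}$ is \emph{not} strongly continuous on $L^\infty(\R^n)$ at $t=0$ (only on the closed subspace of bounded uniformly continuous functions), so your sentence ``strong continuity of the heat semigroup together with dominated convergence puts the fixed point in $X_{t_*}$'' is not literally correct for arbitrary $u_0\in L^\infty$. The usual fixes are either to run the contraction in $L^\infty$ equipped with weak-$*$ continuity at $t=0$ and norm continuity for $t>0$, or to note that the initial data actually used in the paper (see \eqref{INI2}) are continuous, hence in BUC. This is a well-known nuisance and does not affect the substance of your argument; everything else you wrote, including the H\"older bootstrap to handle the non-integrable factor $\|\Delta e^{\sigma\Delta}\|_{\infty\to\infty}\lesssim\sigma^{-1}$ in the last bullet, is the right idea.
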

\section{Blowup Variables and Almost Solutions} \label{Section:BV}
Let  $z(t)\in \R^{n},\ \lambda(t)>0,$ be differentiable functions and let $\alpha (t)$ satisfy the equation
\begin{equation}\label{eqn:evolutionalpha}
\lambda^{-2}\dot \alpha -a\alpha=-\lambda^{-1} \dot z,
\end{equation}
with $a(t)=\dot{\lambda}(t)/\lambda^3(t)$. We introduce the blowup variables
$$ y:=\lambda(t)(x-z(t))-\alpha (t)\ \mbox{and}\ \tau:=\int_{0}^{t}\lambda^{2}(s)ds$$ and define the new function
\begin{equation}\label{eq:definev}
  v(y,\tau):=\lambda^{-\frac{2}{p-1}}(t) u(x,t).
\end{equation}
Plugging \eqref{eq:definev} into \eqref{NLH} we obtain
\DETAILS{\begin{equation}\label{eqn:BVNLH'}
\p_\tau \tilde v=\left(\Delta_s-(as-\lambda^{-1}\dot z)\cdot\nabla_s-\frac{2a}{p-1}\right)\tilde v+|\tilde v|^{p-1}\tilde v.
\end{equation}
with the parameter
$a(\tau)=\dot{\lambda}(t(\tau))/\lambda^3(t(\tau))$.

Now observe that since $\frac{\p \tau}{\p t}=\lambda^2$, we have $\lambda^{-1}\dot z=\lambda z_\tau$.  Let $\alpha$ satisfy the equation $\alpha_\tau -a\alpha=-\lambda z_\tau$. Changing the variables in \eqref{eqn:BVNLH'} as $y=s-\alpha$ and
$\tilde v(s, \tau)= v(y, \tau)$, we conclude that $v$ satisfies the equation}
\begin{equation}\label{eqn:BVNLH}
\p_\tau v=\left(\Delta_y-a y\cdot\nabla_y-\frac{2a}{p-1}\right)v+|v|^{p-1}v,
\end{equation}
where, as above, $a(t)=\dot{\lambda}(t)/\lambda^3(t)$.  The initial
condition for this equation is obtained from the initial condition for
\eqref{NLH} as $ v(y,0)=\lambda_0^{-\frac{2}{p-1}}u_0(z_0+\frac{y+\alpha_0}{\lambda_0})$,
for some $\lambda_0, z_0$ and $\alpha_0$. 

From the local well-posedness of \eqref{NLH} and using rescaling, we can conclude that there exists $T>0$ s.t.
\eqref{eqn:BVNLH} has a unique mild solution in $C([0, T), L^\infty)$ and the solution depends continuously
on the initial condition. Moreover, either $T=\infty$ or $T < \infty$ and $\|v(\tau)\|_{\infty} \rightarrow \infty$
as $\tau \rightarrow T$.

The equation \eqref{eqn:BVNLH} has the following family of homogeneous, static (i.e. $y$ and $\tau$-independent) solutions: $a$ is a constant and
\begin{equation}
v_a:=\lb \frac{2a}{p-1}\rb^\frac{1}{p-1}.
\end{equation}
This family of solutions corresponds to the homogeneous solution
\eqref{uhom} of the nonlinear heat equation with the parabolic
scaling $\lambda^{-2}=2 a(T-t)$, where the blowup time,
$T:=\left[u_0^{p-1}(p-1)\right]^{-1}$, is dependent on the initial
value, $u_0$ of the homogeneous solution $u_{\rm hom}(t)$.

If the parameter $a$ is $\tau$ dependent but $|a_\tau|$ is small,
then the above solutions are good approximations to the exact
solutions. A richer family of approximate solutions is obtained by
solving the equation $a y\cdot\nabla_yv+\frac{2 a}{p-1} v=v^p$,
obtained from \eqref{eqn:BVNLH} by neglecting the $\tau$ derivative
and second order partial derivative in $y$. This equation has the
general solution
\begin{equation}
v_{ab}(y):=\lb\frac{2a}{p-1 +  y b y}\rb^\frac{1}{p-1}
\end{equation}
for 
all $b:=(b_{ij}), b_{ij}\in\R$, real, symmetric $n\times n$-matrices. Here recall $y b y:=\sum_{i,j=1}^n y_i b_{ij}y_j$.
In what follows we take
$b\ge 0,$ so that $v_{ab}$ is nonsingular. Note
that $v_{2a, 0}=v_a$.

\DETAILS{Before proceeding we make the following observation which will help to explain what is going on in the following sections. Denote the map on the r.h.s. of \eqref{eqn:BVNLH} by $F_{a}$:
\begin{equation}\label{eqn:F}
F_{a}(v):=\lb\Delta- a y\cdot \n
-\frac{2a}{p-1}\rb v+|v|^{p-1}v.
\end{equation}
Then $v_{ab}$ 
 is an approximate solution of the `stationary' equation:
$F_{a}(v_{ab})\approx 0.$ 
Differentiating this equation w.r.\ to  $a$, $ z$ (remember, $ y:=\lambda(t)(x-z(t))$) and $b$, denoting the Fr\'echet derivative (linearization) of $F_{a}$ at $v_{ab}$ by  $L_{a b }$ and using that $v_{ab}=(p-1)a \p_a v_{ab}$ and
$y\cdot \n v_{ab}=\frac{1}{p-1}\frac{2yby}{p-1+yby} v_{ab}=\frac{2ayby}{p-1+yby} \p_a v_{ab}$ and $\n v_{ab}=\lam^{-1}\n_z v_{ab}$, we obtain 
\begin{equation}\label{eqn:zeromodes}
L_{a b }(\p_a v_{ab})\approx 2a(1+\frac{yby}{p-1+yby}) \p_a v_{ab},\ \quad
L_{a b }(\n_z v_{ab})\approx a \n_z v_{ab},\ \quad
L_{a b }(\p_{b_{ij}}v_{ab})\approx  0 .
\end{equation}
Since for $|y|$ bounded and $\| b\|$ small, $v_{ab}\approx \lb\frac{2a}{p-1}\rb^{\frac{1}{p-1}}$ and therefore
$$ \p_a v_{ab} \approx \frac{1}{a(p-1)}\lb\frac{2a}{p-1}\rb^{\frac{1}{p-1}},\  \quad
\n_{z_j} v_{ab} \approx \lam\frac{\sum_j b_{ij}y_j}{p-1}\lb\frac{2a}{p-1}\rb^{\frac{1}{p-1}}\ \quad \textrm{and}\ \quad \p_{b_{ij}}v_{ab} \approx \frac{y_iy_j}{p-1}\lb\frac{2a}{p-1}\rb^{\frac{1}{p-1}},$$
 we expect that the linearized operator $L_{a b }$ has approximate eigenvalues $2a,\ a$ and $0$ with the corresponding approximate eigenfunctions of $1,\ y_j$ and $y_iy_j$.

The first two groups of approximate eigenfunctions are related to the scaling and translation symmetry of the original nonlinear heat equation \eqref{NLH}. The third one can be thought of as related to the symmetry w.r.\ to rotations. 
The approximate eigenfunctions above give the unstable modes in our problem and they will play an important role in our analysis.}
%

\section{Reparametrization of Solutions} \label{Section:Reparam}

In this section we split solutions to \eqref{eqn:BVNLH} 
into the leading term---the almost solution
$V_{ a b }(y):=(\frac{a+1/2}{p-1+ y b y})^{\frac{1}{p-1}}$ 
---and a fluctuation $\xi$ around it. (The reason for passing from $v_{ab}(y):=\lb\frac{2a}{p-1 +  y b y}\rb^\frac{1}{p-1}$ to $V_{ a b }(y)$ was explained in the introduction.) More precisely, we would
like to parameterize a solution by a point on the manifold
$\mathcal{M}_{\rm as}:=\{V_{ab}\, |\, a\in \Rp, b\in \R^{n\times n}
\}$ of almost solutions and the fluctuation orthogonal
to this manifold (large slow moving and small fast moving parts of
the solution).  
For technical reasons, it is more convenient to
require the fluctuation to be almost orthogonal to the manifold
$\M_{\rm as}$. More precisely, recalling the discussion at the end of the previous section, we require $\xi$ to be orthogonal to the
vectors $\phi^{(ij)}_{a},\, 0\leq i,
j\leq n$, where
$$\phi^{(00)}_{ a}(y):=1,\ 
\phi^{(0i)}_{a}(y)=\phi^{(i0)}_{ a}(y):=\sqrt{a}
y_i ,\
\phi^{(ij)}_{a}(y):=ay_iy_j,\, 1\leq i,
j\leq n,$$
which are almost tangent vectors to the above manifold, provided $b$ is sufficiently small.

%
\DETAILS{\paragraph{\bf Remark.} In the course of our analysis of equation \eqref{NLH} we will find the following dynamical system for the parameters $a,b,c$:
\begin{align}
\partial_\tau c&=c(c-2a)-\frac{2}{p-1}\Tr b+\mathrm{Rem}_c(\xi,a,b,c)\\
\partial_\tau b&=(c-2a)b-\frac{2b}{p-1}\Tr b+\frac{4p}{(p-1)^2}b^2+\mathrm{Rem}_b(\xi,a,b,c)\,.
\end{align}
Note that we are free to choose the (time-dependent) additional parameter $c$ at our convenience. From the above equations we read off the fixed points (zeroes of the vector field governing the evolution of the parameters $a, b, c$) as
\begin{align*}
(a,b,c)=(a^*,0,2a^*)\,,
\end{align*}
for any choice of function $a^*$. The fixed point we want the parameters to flow to is
\begin{align*}
(a,b,c)=(\frac{1}{2},0,1)\,.
\end{align*}
One way to achieve this is to fix $c$ as a convex combination of $1$ and $2a$:
\begin{align*}
c=\rho+2(1-\rho)a\,,
\end{align*}
for any $\rho\in (0,1)$. Note that the extremal point $\rho=0$ is not a good choice because the equation for $c_\tau$ would lose its leading part driving $a$ and $c$ to the desired fixed point, while $\rho=1$ robs us of an equation for $a_\tau$.
The simplest choice is $\rho=\frac{1}{2}$, so that
\begin{align*}
c=\frac{1}{2}+a \quad \textnormal{and } c-2a=\frac{1}{2}-a\,.
\end{align*}
\textit{For the remainder of this article, we fix the relation between $a$ and $c$ as $c=\frac{1}{2}+a$.}}
%
Denote by $\bM_n$ the space of real, symmetric, $n\times n$ matrices
and by $\bM^+_n$, the positive cone in this space. Let $u_{\lambda, z}(y) :=
\lambda^{-\frac{2}{p-1}}u(x)$, with $x=z+\lambda^{-1}(y+\alpha)$. We define the neighborhoods
\begin{equation*}
U_{\epsilon}:=\{v\in L^\infty(\R^n)\ |\ \|e^{-\frac{1}{3}
|y|^2}(v-V_{ab})\|_\infty\leq C\norm{b}^2 \mbox{ for some}\ 1/4\le a\le 1,\
0< b\le \epsilon\ \}
\end{equation*}
and
\begin{equation*}
\tilde U_{\epsilon}:=\{u\in L^\infty(\R^n)\ |\ u_{\lambda, z}\in U_{\epsilon}\ \}.
\end{equation*}
The following statement will be used to reparametrize the initial conditions.
\begin{prop}\label{Prop:Splitting}
There exist an $\epsilon_{0}>0$ and a unique $C^1$ functional
$g:U_{\epsilon_0}\rightarrow \mathbb{R}^{+}\times
\bM^+_n\times \mathbb{R}^n$, such that any function $u_{\lambda, z_0}\in U_{\epsilon_0}$
can be uniquely written in the form
\begin{equation} \label{eqn:split}
u_{\lambda, z_0} =V_{ab} + \xi,
\end{equation}
with $\xi\perp  \phi_{a}^{(ij)},\ 0\leq i,
j\leq n,$ in $L^2(\R^n, \e^{-\frac{a|y|^{2}}{2}} dy)$, $(a, b, z)=g(u_{\lambda,z_0})$.
Moreover, if  $\frac{1}{4} \le a_0 \le 1, 0< b_0 \le \varepsilon_0$ 
and
$\|\langle y\rangle^{-m}(u_{\lambda,z_0}-V_{a_0
b_0})\|_\infty\le \delta_{m}$ with $m=0,3$,
$\delta_{3}=O(\|b_{0}\|^2)$ and $\delta_{0}$ small, we have
\begin{equation}\label{eq:vv0}
|g_1(u_{\lambda,z_0})-(a_0, b_0)|\lesssim \|b_{0}\|^{2},
\end{equation}
\begin{equation}\label{Ineq:z_0}
|g_2(u_{\lambda,z_0})-z_0|\lesssim \|b_0\|,
\end{equation}
\begin{equation}
\|\langle y\rangle^{-3}(u_{\lambda,z_0}-V_{g(u_{\lambda,z_0})}))\|_\infty\lesssim \|b_{0}\|^{2},
\label{Ineq:IC}
\end{equation}
\begin{equation}\label{eq:vwithoutweight}
\|u_{\lambda,z_0}-V_{g(u_{\lambda,z_0})}\|_\infty\lesssim \delta_{0}+\|b_{0}\|.
\end{equation}
for $g(u_{\lambda, z_0})=(g_1(u_{\lambda, z_0}), g_2(u_{\lambda, z_0}))$, where
$g_1(u_{\lambda, z_0})=(a, b)$ and $g_2(u_{\lambda, z_0})=z$.
\end{prop}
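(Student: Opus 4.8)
The plan is to prove Proposition~\ref{Prop:Splitting} by a standard Lyapunov--Schmidt/implicit function theorem argument, viewing the orthogonality conditions as the equations defining the parameters $(a,b,z)$ as functions of $u_{\lambda,z_0}$. First I would fix $u = u_{\lambda,z_0}\in U_{\epsilon_0}$ and consider the map
\[
G(a,b,z;u) := \Big( \ip{u_{\lambda,z}-V_{ab}}{\phi_a^{(ij)}}_{L^2(e^{-a|y|^2/2}dy)} \Big)_{0\le i,j\le n},
\]
a smooth map from a neighborhood of $(a_0,b_0,z_0)$ in $\Rp\times\bM_n^+\times\R^n$ into $\R^{N}$ with $N = 1 + n + n(n+1)/2$ matching the number of parameters. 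The goal is to solve $G(a,b,z;u)=0$ for $(a,b,z)$. For this I would verify that at the point where $u$ is exactly $V_{a_0b_0}$ (and the right choice of $\lambda,z$) one has $G=0$, and then show the Jacobian $\partial_{(a,b,z)}G$ is invertible, with inverse bounded uniformly for $1/4\le a\le 1$, $0<b\le\epsilon_0$ and $\epsilon_0$ small.

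The key computation is the Jacobian at $b=0$. Differentiating $V_{ab}$ with respect to $a$, $b_{ij}$ and (through $u_{\lambda,z}$, hence through the $y$-variable shift) with respect to $z$, and using the approximate eigenfunction relations recalled in the excerpt --- namely $\partial_a V_{ab}\approx \mathrm{const}$, $\partial_{b_{ij}}V_{ab}\approx \mu y_iy_j$, $\nabla_{z_j}V_{ab}\approx \mathrm{const}\cdot\sum_i b_{ij}y_i$ --- together with the fact that the $\phi_a^{(ij)}$ are built from $1$, $y_i$, $y_iy_j$, the leading part of the Jacobian becomes block-diagonal: the derivatives in $a$ pair nontrivially with $\phi^{(00)}$, the derivatives in $z$ with the $\phi^{(0i)}$, and the derivatives in $b$ with the $\phi^{(ij)}$, $1\le i,j\le n$. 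The relevant Gaussian moment integrals $\int y^\alpha e^{-a|y|^2/2}dy$ are nonzero and explicitly computable, so the leading Jacobian is invertible with a uniform bound; the corrections are $O(\|b\|)$ and hence negligible for $\epsilon_0$ small. One subtlety is the $z$-block: since $\nabla_z V_{ab}$ is itself $O(\|b\|)$, I would instead use that the $z$-dependence of $u_{\lambda,z}$ (not of $V_{ab}$) supplies the nondegenerate piece, i.e. differentiate the translation acting on $u$ rather than on the profile; near $u\approx V_{a_0b_0}$ this again gives a nonzero Gaussian moment. This is the step I expect to be the main obstacle --- getting the $z$-block of the linearization to be uniformly nondegenerate, because naively it degenerates as $b\to 0$, so one must be careful about what is held fixed and extract the nondegeneracy from the correct term.

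With invertibility of the Jacobian in hand, the implicit function theorem (quantitative version, using the uniform bounds) gives a unique $C^1$ solution $(a,b,z)=g(u)$ on a possibly smaller neighborhood $U_{\epsilon_0}$, which also yields the claimed $C^1$ regularity of the functional $g$, and positivity of $b$ (hence $b\in\bM_n^+$) follows since the solution stays close to $b_0>0$ for $\epsilon_0$ small. Finally I would prove the quantitative estimates \eqref{eq:vv0}--\eqref{eq:vwithoutweight}. For \eqref{eq:vv0} and \eqref{Ineq:z_0}: apply $G(a,b,z;u)=0$, subtract $G(a_0,b_0,z_0;V_{a_0b_0})$-type reference terms, and use $G(a_0,b_0,z_0;u) = \ip{u_{\lambda,z_0}-V_{a_0b_0}}{\phi_{a_0}^{(ij)}}$, which is bounded using the hypothesis $\|\langle y\rangle^{-m}(u_{\lambda,z_0}-V_{a_0b_0})\|_\infty\le\delta_m$ (the weight $\langle y\rangle^{-3}$ against the Gaussian-decaying $\phi^{(ij)}$ is integrable) by $O(\delta_3) = O(\|b_0\|^2)$ for the $a,b$-components and $O(\delta_0+\delta_3)$ for the $z$-components; inverting the Jacobian converts these into $|g_1(u)-(a_0,b_0)|\lesssim\|b_0\|^2$ and $|g_2(u)-z_0|\lesssim\|b_0\|$ (the weaker power for $z$ being forced by the degeneracy discussed above). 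For \eqref{Ineq:IC} and \eqref{eq:vwithoutweight}, write $u_{\lambda,z_0}-V_{g(u)} = (u_{\lambda,z_0}-V_{a_0b_0}) + (V_{a_0b_0}-V_{g(u)})$ and estimate the second difference by the mean value theorem using the parameter bounds just obtained together with $\|\langle y\rangle^{-3}(V_{a_0b_0}-V_{ab})\|_\infty\lesssim|a-a_0|+\|b-b_0\|$ and $\|V_{a_0b_0}-V_{ab}\|_\infty\lesssim|a-a_0|+\|b-b_0\|+|z-z_0|$, which gives the stated $\|b_0\|^2$ and $\delta_0+\|b_0\|$ bounds respectively.
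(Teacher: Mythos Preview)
Your overall strategy---implicit function theorem applied to the orthogonality map $G$---is exactly the paper's, and your handling of the $(a,b)$-blocks and of the final estimates \eqref{Ineq:IC}, \eqref{eq:vwithoutweight} is essentially correct. There is, however, a genuine gap in your treatment of the $z$-block.

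You propose to cure the degeneracy of the $z$-derivative by differentiating the translation acting on $u_{\lambda,z}$ rather than on $V_{ab}$. This does not help: near the profile, $\partial_{z_k} u_{\lambda,z}(y)$ is essentially $\lambda\,\partial_{y_k} V_{a_0b_0}(y)$, and $\partial_{y_k} V_{ab} = -\frac{2}{p-1}\frac{(by)_k}{p-1+yby}V_{ab}$ is itself $O(\|b\|)$. So whichever side you differentiate, the $z$-block of $\partial_\mu G$ is of order $\|b\|$, not $O(1)$. You cannot make it uniformly nondegenerate; you have to live with it.

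The paper's resolution is to \emph{accept} that the $z$-block $K_{33}$ of the Jacobian satisfies $K_{33}\sim b$ (hence $K_{33}^{-1}=O(\|b\|^{-1})$), check that the off-diagonal blocks are $o(\|b\|)$ so the block inversion is clean, and then measure parameter increments in the weighted norm
\[
|\mu|_{b_0}:=|a|+\|b\|+\|b_0\|\,|z|,
\]
for which one has $|(\partial_\mu G)^{-1}\nu|_{b_0}\lesssim |\nu|$. Running the quantitative fixed-point argument in this norm gives $|g(u)-\mu_0|_{b_0}\lesssim \|b_0\|^2$, which unpacks to $|a-a_0|+\|b-b_0\|\lesssim\|b_0\|^2$ and $|z-z_0|\lesssim\|b_0\|$; this is precisely why the $z$-estimate loses one power. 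Your final answers match this, but your stated mechanism for obtaining them does not.

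One minor correction for \eqref{eq:vwithoutweight}: in the unweighted norm one has $\|V_{ab}-V_{a_0b_0}\|_\infty\lesssim |a-a_0|+\|b-b_0\|/\|b_0\|$ (the sup is attained where $yby\sim 1$, i.e.\ $|y|\sim\|b\|^{-1/2}$), not $|a-a_0|+\|b-b_0\|$; combined with $\|b-b_0\|=O(\|b_0\|^2)$ this still yields $O(\|b_0\|)$, so your conclusion is unaffected. The $|z-z_0|$ term you include there is spurious since $V_{ab}$, written in the $y$-variable, does not depend on $z$.
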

\begin{proof} Let
$V_{\lambda a b z}(x):=\lambda^{\frac{2}{p-1}}V_{ a b }(y)$, $V_{\mu} \equiv V_{\lambda a b z}$
with $\mu=(a,b, z)$,
and $\varphi^{(ij)}_{az}(x):=\phi^{(ij)}_{a}(y)$ with $ y:=\lambda(x-z)-\alpha$.
The orthogonality conditions on the fluctuation can be written as
$G(\mu,u)=0$, where $G:\mathbb{R}^{+}\times
\bM^+_n\times \mathbb{R}^n\times
 L^\infty\lb \R^n\rb\rightarrow \bM_{n+1}$ is defined
as
\begin{equation*}
\DETAILS{G(\mu, v):=\lb \begin{array}{c} \ip{V_{\mu}-v}{e^{-\frac{a|y|^{2}}{4}}\phi_{0a}}\\
\ip{V_{\mu}-v}{e^{-\frac{a|y|^{2}}{4}}\phi^{(1)}_{2a}}\\
\cdots\\
\cdots\\
\cdots\\
\ip{V_{\mu}-v}{e^{-\frac{a|y|^{2}}{4}}\phi^{(n)}_{2a}}\\
\end{array} \rb.}
G(\mu, u):= \lb \ip{V_{\mu}-u}{\varphi^{(ij)}_{a z}}\rb .
\end{equation*}
Here and in what follows, all inner products are $L^2(\R^n, \e^{-a|y|^2/2}\d y)$ inner
products.  Whenever it is convenient we identify  $\mu$ with an $(n+1)\times(n+1)-$matrix:
$\mu_{00}:=a,\ \mu_{0i}=\mu_{i0}=z_{i},\ \mu_{ij}:=b_{ij},\, 1\leq i, j\leq n$ and let $\bM^{++}_{n+1}:=\{\mu\in \bM_{n+1}\ | a\ge 0,\  b \ge 0,\ z \in \R^n\}$ and $\bM_{n+1,\epsilon}:=\{\mu\in \bM_{n+1}\ | a\in [\frac{1}{4},1],\ 0 < b \le \epsilon,\ z \in \R^n\}$.

Let $X:= e^{\frac{1}{3} |y|^2} L^\infty(\R^n)$ with the
corresponding norm. 
Using the implicit function theorem we will
prove that for any $\mu_0:=(a_0, b_0, z_0)\in 
\bM^{++}_{n+1,\epsilon_{0}}$ there exists a unique $C^1$ function
$\tilde g:X\rightarrow \mathbb \bM_{n+1} 
$, defined in
a neighborhood $\tilde U_{\mu_0}\subset X$ of $V_{\mu_0}$, such that
$G(\tilde g(u),u)=0$ for all $u\in \tilde U_{\mu_0}$. Let
$B_{\varepsilon}(V_{\mu_0})$ and $B_\delta(\mu_0)$ be the balls in
$X$ and $\mathbb{R}^{n+1}$ around $V_{\mu_0}$ and $\mu_0$ and of the
radii $\varepsilon$ and $\delta$, respectively.

Note first that the mapping $G$ is $C^1$ and $G(\mu_0, V_{\mu_0})=0$
for all $\mu_0$.  We claim that the linear map $\p_\mu G(\mu_0,
V_{\mu_0})$ is invertible.

\begin{lemma}\label{partialG}
$\exists\eps_0>0$ such that $\p_\mu G(\mu,u)$, for $u\in \tilde U_{\epsilon_0}$, is invertible.
\end{lemma}

\begin{proof}
Let the indices $\alpha$ and $\beta$ run over the pairs $(i, j),\ 0 \le i \le j \le n$. We compute
\begin{equation}\label{linearization}
\p_\mu G(\mu,u)= A_{1}+A_{2}
\end{equation}
where the $(\alpha,\beta)-$th entries of $A_1$ and  $A_2$ are
\begin{equation}\label{eqn:A_1}
A_1(\alpha,\beta)=\ip{\p_{\mu_\alpha} V_{\mu}}{ \varphi_{az}^{(\beta)}}
\end{equation}
and
$$A_2(\alpha,\beta)=\ip{ V_{ \mu}-u}{\p_{\mu_\alpha} \varphi_{az}^{(\beta)}},$$ respectively.
We write $A_1$ in the block form
$$A_1=\left(
  \begin{array}{ccc}
    K_{11} & K_{12} & K_{13} \\
    K_{21} & K_{22} & K_{23} \\
    K_{31} & K_{32} & K_{33} \\
  \end{array}
\right),$$
where $K_{11}=\ip{\p_{(a, b^{\text{diag}})}V_{\mu}}{\varphi_{az}^{ii}}$, with $0\leq i\leq n$, $K_{22}=\ip{\p_{b^{\text{off-diag}}}V_{\mu}}{\varphi_{az}^{ij}}$, with $1\leq i < j \leq n$,
$K_{33}=\ip{\p_z V_{\mu}}{\varphi_{az}^{0i}}$, with $1\leq i \leq n$ and similarly for the other entries.
For $b>0$ and small, we 
compute using change of variable $y=\lambda(x-z)-\alpha$, that (see Appendix 2 for more details)
\begin{equation}\label{K_11}
K_{11}=\frac{\lambda^{-n+\frac{2}{p-1}}}{p-1}(\frac{a+\frac{1}{2}}{p-1})^{\frac{1}{p-1}}(\frac{2\pi}{a})^{\frac{n}{2}}\left(
\begin{array}{lllll}
\frac{1}{a+\frac{1}{2}} & \frac{1}{a+\frac{1}{2}} & \frac{1}{a+\frac{1}{2}}& \cdots & \frac{1}{a+\frac{1}{2}}\\
-\frac{1}{(p-1)a} & -\frac{3}{(p-1)a} & -\frac{1}{(p-1)a}&\cdots &-\frac{1}{(p-1)a}\\
-\frac{1}{(p-1)a} & -\frac{1}{(p-1)a}& -\frac{3}{(p-1)a} &\cdots&-\frac{1}{(p-1)a}\\
\cdots&\cdots&\cdots&\ddots&\vdots\\
-\frac{1}{(p-1)a}&-\frac{1}{(p-1)a}&\cdots&-\frac{1}{(p-1)a}& -\frac{3}{(p-1)a}
\end{array}
\right)+O(\|b\|)
\end{equation}
is an $(n+1)\times(n+1)$ matrix,
\begin{equation}\label{K_22}
K_{22}=-\lambda^{-n+\frac{2}{p-1}}(\frac{a+1/2}{p-1})^{\frac{1}{p-1}}
\frac{2}{(p-1)^2a}(\frac{2\pi}{a})^{n/2}I_{\frac{n(n-1)}{2}\times\frac{n(n-1)}{2}}+O(\|b\|)
\end{equation}
and
\begin{equation}\label{K_33}
K_{33}=-\lambda^{-n+\frac{2}{p-1}}(\frac{a+1/2}{p-1})^{\frac{1}{p-1}}
(\frac{2\pi}{a})^{n/2}b+o(\|b\|)
\end{equation}
 is an $n\times n$ matrix.
Moreover,
\begin{equation}\label{K_ij}
K_{ij}=o(\|b\|)\ \text{for}\ 1\leq i\neq j \leq 3.
\end{equation}
Since $K_{11}$, $K_{22}$ and $K_{33}$ are invertible, the matrix $A_{1}$ is also invertible. Furthermore, by the Schwarz
inequality
\begin{equation}\label{eqn:A_2}
\|A_{2}\|\lesssim \| u-V_{a_{0}b_{0}}\|_X=O(\|b\|^2).
\end{equation}
Therefore there exist $\varepsilon_0$ and $\varepsilon_1$ such that the
matrix $\p_\mu G(\mu,u)$ has an inverse for
$\mu\in \bM_{n+1,\eps_0} 
$ and  $u\in
B_{\varepsilon_1}(V_{\mu})$.
\end{proof}

\DETAILS{
Moreover, from \eqref{linearization}-\eqref{eqn:A_2} we know that $\p_\mu G$ can be written as
$$\p_\mu G=\left(
             \begin{array}{cc}
               A_{11} & 0 \\
               0 & A_{22} \\
             \end{array}
           \right)
           +R,
$$
where $A_{11}=O(1)$ and has an $O(1)$ inverse, $A_{22}=O(\|b\|)$ and has an $O(\|b\|^{-1})$ inverse, and $R=o(\|b\|)$.
Using the fact that $(A+B)^{-1}=[A(I+A^{-1}B)]^{-1}=(I+A^{-1}B)^{-1}A^{-1}=\sum_{k=0}^{\infty}(-A^{-1}B)^kA^{-1}$
provided that $A$ is invertible and $\|A^{-1}B\|<1$, we conclude
\begin{equation}\label{est:partialG}
(\p_\mu G)^{-1}=\left(
             \begin{array}{cc}
               B_{11} & B_{12} \\
               B_{21} & B_{22} \\
             \end{array}
           \right),
\end{equation}
where $B_{11}=O(1)$, $B_{22}=O(\|b\|^{-1})$ and $B_{12}=B_{21}=o(1)$.
}

Moreover, from \eqref{linearization}-\eqref{eqn:A_2} we know that $\p_\mu G$ can be written as
$$\p_\mu G=\left(
             \begin{array}{cc}
               A_{11} & A_{12} \\
               A_{21} & A_{22} \\
             \end{array}
           \right)
           +R,
$$
where $A_{11}=O(1)$ and has an $O(1)$ inverse, $A_{22}=O(\|b\|)$ and has an $O(\|b\|^{-1})$ inverse,
$A_{12}=o(\|b\|)$ and $A_{21}=o(\|b\|)$. Then we have
\begin{equation}\label{est:partialG}
(\p_\mu G)^{-1}=\left(
             \begin{array}{cc}
               B_{11} & B_{12} \\
               B_{21} & B_{22} \\
             \end{array}
           \right),
\end{equation}
where $B_{11}=(A_{11}-A_{12}A_{22}^{-1}A_{21})^{-1}=O(1)$, $B_{22}=(A_{22}-A_{21}A_{11}^{-1}A_{12})^{-1}=O(\|b\|^{-1})$,
$B_{12}=-A_{11}^{-1}A_{12}(A_{22}-A_{21}A_{11}^{-1}A_{12})^{-1}=o(1)$ and $B_{21}=-A_{22}^{-1}A_{21}(A_{11}-A_{12}A_{22}^{-1}A_{21})^{-1}=o(1)$.

Hence by the implicit function theorem,
the equation $G(\mu,u)=0$ has a unique solution $\mu=\tilde g(u)$ on a
neighborhood of every $V_\mu$,
$\mu\in \bM_{n+1,\epsilon} 
$, which is $C^1$ in $u$.  Our next goal is to determine these neighborhoods.

To determine a domain of the function $\mu=\tilde g(u)$, we examine closely
a proof of the implicit function theorem. Proceeding in a standard
way, we expand the function $G(\mu,u)$ in $\mu$ around $\mu_0$:
\begin{equation*}
G(\mu,u)=G(\mu_0,u)+\p_\mu G(\mu_0,u)(\mu-\mu_0)+R(\mu,u),
\end{equation*}
where $R(\mu,u)=\O{|\mu-\mu_0|^2}$ uniformly in $u\in X$. Here
$|\mu|^2=|a|^2+\|b\|^2+|z|^2$ for $\mu=(a,b, z)$.  Inserting this into the
equation $G(\mu,u)=0$ and inverting the matrix $\p_\mu G(\mu_0,u)$,
we arrive at the fixed point problem $\alpha=\Phi_u(\alpha)$, where
$\alpha:=\mu-\mu_0$ and $\Phi_u(\alpha):=- \p_\mu G(\mu_0,u)^{-1}
[G(\mu_0,u)+R(\mu,u)]$.
By the above estimates there exists an
$\varepsilon_1$ such that the matrix $\p_\mu G(\mu_0,u)^{-1}$ is
bounded in $u\in B_{\varepsilon_1}(V_{\mu_0})$.
Define
$$|\mu|_{b_0}=|a|+\|b\|+\|b_0\||z|$$
for $\mu=(a,b,z)$, then from \eqref{est:partialG}
we have $|(\p_{\mu}G)^{-1}\mu|_{b_0} \lesssim |\mu|$. It follows that
\begin{equation}\label{eqn:SplittingSharp}
\begin{array}{ll}
&|\Phi_{u}(\alpha)|_{b_0}\lesssim|G(\mu_0,u)|+|\alpha|^2.
\end{array}
\end{equation}
Furthermore, using that $\p_\alpha \Phi_u(\alpha)= - \p_\mu
G(\mu_0,u)^{-1} [G(\mu,u)- G(\mu_0,u)+R(\mu,u)]$, we obtain that
there exist $\varepsilon \leq \varepsilon_1$ and $\delta$ such that
$\|\p_\alpha\Phi_u(\alpha)\|\le\frac{1}{2}$ for all $u\in
B_{\varepsilon}(V_{\mu_0})$ and $\alpha\in B_\delta(0)$.  Pick
$\varepsilon$ and $\delta$ so that $\varepsilon
\ll\delta\ll \|b_0\|\ll 1$.  Then, for all $u\in
B_{\varepsilon}(V_{\mu_0})$, $\Phi_u$ is a contraction on the ball
$B_\delta(0)$ and consequently has a unique fixed point in this
ball.  This gives a $C^1$ function $\mu=\tilde g(u)$ on
$B_{\varepsilon}(V_{\mu_0})$ satisfying $|\mu-\mu_0|\le\delta$. An
important point here is that since $\varepsilon\ll \|b_0\|$ we have
that $b>0$ for all $V_{ab}\in B_{\varepsilon}(V_{\mu_0})$.  Now,
clearly, the balls $B_{\varepsilon}(V_{\mu_0})$ with
$\mu_0\in \bM_{n+1,\varepsilon_0} 
$ cover the
neighbourhood $\tilde U_{\varepsilon_0}$.  Hence, the map $\tilde g$ is defined on
$\tilde U_{\varepsilon_0}$ and is unique, and the same is true for the map $g$,
defined as $g(u_{\lambda, z_0})=\tilde g(u)$, which implies the first part of
the proposition.

Now we prove the second part of the proposition.  The definition of
the function $G(\mu,u)$ implies $G(\mu_0,u)=\lambda^{-n+\frac{2}{p-1}}\lb\ip{V_{a_0b_0}-u_{\lambda,z_0}}{\phi_a^{ij}(y)} \rb$, therefore
\begin{equation}
|G(\mu_0,u)|\lesssim \|e^{-\frac{1}{3} y^2}(u_{\lambda,z_0}-V_{a_0b_0})\|_\infty.
\label{eqn:28aA}
\end{equation}
This inequality together with the estimate
\eqref{eqn:SplittingSharp} and the fixed point equation
$\alpha=\Phi_u(\alpha)$, where $\alpha=\mu-\mu_0$ and $\mu=g(u_{\lambda, z_0})$,
implies
\begin{equation}\label{eqn:28a}
|g(u_{\lambda,z_0})-\mu_0|_{b_0}\lesssim \|e^{-\frac{1}{3} y^2}(u_{\lambda,z_0}-V_{a_0b_0})\|_\infty.
\end{equation}
From one of the conditions of the
proposition, r.h.s. of \eqref{eqn:28a} $=O(\|b_0\|^2)$ if $a_0\in[\frac{1}{4},1]$. The
last estimate implies \eqref{eq:vv0} and \eqref{Ineq:z_0}.  Using Equation
\eqref{eqn:28a} we obtain
$$
\begin{array}{lll}
\|\langle y\rangle^{-3}(u_{\lambda,z_0}-V_{g(u_{\lambda,z_0})})\|_\infty&\leq &\|\langle
y\rangle^{-3}(u_{\lambda,z_0}-V_{\mu_{0}})\|_\infty+\|\langle
y\rangle^{-3}(V_{g(u_{\lambda,z_0})}-V_{\mu_{0}})\|_\infty\\
&\lesssim& \|\langle
y\rangle^{-3}(u_{\lambda,z_0}-V_{\mu_{0}})\|_\infty+|g(u_{\lambda,z_0})-\mu_{0}|\\
&\lesssim & \|\langle y\rangle^{-3}(u_{\lambda,z_0}-V_{\mu_{0}})\|_\infty,
\end{array}
$$ which leads to \eqref{Ineq:IC}.
Finally, to prove Equation \eqref{eq:vwithoutweight},
we write
$$\|u_{\lambda,z_0}-V_{g(u_{\lambda,z_0})}\|_\infty\leq \|u_{\lambda,z_0}-V_{a_{0},b_{0}}\|_\infty+\|V_{g(u_{\lambda,z_0})}-V_{a_{0},b_{0}}\|_\infty.$$
A straightforward computation gives $\|V_{a b}-V_{a_{0}
b_{0}}\|_\infty\lesssim |a-a_{0}|+\frac{\|b-b_{0}\|}{\|b_{0}\|}$.  Since
by \eqref{eq:vv0}, $|a-a_{0}|+\|b-b_{0}\|=O(\|b_{0}\|^2)$, we have
$\|V_{a b}-V_{a_0 b_0}\|_\infty\lesssim \|b_{0}\|$. This together with
the fact $\|u_{\lambda,z_0}-V_{a_{0},b_{0}}\|_\infty\leq \delta_{0}$ completes the
proof of \eqref{eq:vwithoutweight}.
\end{proof}

Now we establish a reparametrization of the solution $u(x,t)$ on small
time intervals. In Section \ref{SecMain} we convert this result to a
global reparametrization. In the rest of the section it is
convenient to work with the original time $t$,  instead of rescaled
time $\tau$. We let $I_{t_0, \delta}:= [t_0, t_0 +\delta]$ and
define for any time $t_{0}$ and constant $\delta>0$ three sets:
$$\mathcal{A}_{t_0, \delta}:=
C^{1}(I_{t_0, \delta},[1/4,1]),\ \mathcal{B}_{t_0,
\delta, \epsilon_0}:=C^{1}(I_{t_0, \delta},\bM^+_{n,\epsilon_0}) \ \mbox{and}\
\mathcal{C}_{t_0,
\delta}:=C^{1}(I_{t_0, \delta},[-1,1]^n),$$
where we recall the constant $\epsilon_{0}$ from Proposition
\ref{Prop:Splitting}.

Recall $u_{\lambda,z}(y, t) :=
\lambda(t)^{-\frac{2}{p-1}}u(x,t)$, with $x=z(t)+\lambda^{-1}(t)(y+\alpha(t))$. Suppose
$u(\cdot,t)$ is a function such that for some $\lambda_{0}>0$
\begin{equation}\label{eq:init2}
\sup_{t \in I_{t_0, \delta}}\|b^{-1}(t)\|\|\langle y\rangle^{-3}
(u_{\lambda,z}(\cdot,t)-V_{a(t),b(t)})\|_{\infty}\ll 1
\end{equation}
for some  $a\in\mathcal{A}_{t_0, \delta}$, $b\in\mathcal{B}_{t_0,
\delta, \epsilon_0}$, $z\in\mathcal{C}_{t_0,\delta}$, $\lambda(t)$ satisfying
$\lambda(t_{0})=\lambda_{0}\ \mbox{and}\
\lambda^{-3}(t){\partial_t}\lambda(t)=a(t)$ and $\alpha(t)$ satisfying $\alpha(t_0)=\alpha_0$ and $\p_t\alpha(t)-\lambda^2(t)a(t)\alpha(t)+\lambda(t)\p_t z(t)=0$. We define the set
$$\mathcal{U}_{t_0,\delta, \eps_0, \lambda_0, \alpha_0}:=\{u \in C^1(\overset{\circ}{I}_{t_0, \delta},
\langle y\rangle^3L^\infty(\R^n))\ |\ \eqref{eq:init2}\ \text{holds
for some}\ a\in\mathcal{A}_{t_0,\delta},\ b\in\mathcal{B}_{t_0,\delta,\eps_0} \ \text{and} \ z\in\mathcal{C}_{t_0,\delta}\}.$$

\begin{prop}\label{Prop:Splitting2V}
Suppose $u\in \mathcal{U}_{t_0,\delta, \epsilon_0, \lambda_0, \alpha_0}$ and
$\lambda_{0}^{2}\delta \ll 1$. Then there exists a unique $C^1$ map
$g_\#:\mathcal{U}_{t_0,\delta, \epsilon_0, \lambda_0, \alpha_0}\rightarrow
\mathcal{A}_{t_0, \delta}\times \mathcal{B}_{t_0, \delta,
\epsilon_0}\times \mathcal{C}_{t_0,\delta}$, such that for $t\in I_{t_0, \delta},$ $u(\cdot,t)$ can
be uniquely represented in the form
\begin{equation} \label{eqn:splitting2}
u_{\lambda}(y, t) =V_{g_\#(u)(t)}(y) + \xi(y,t),
\end{equation}
with $(a(t), b(t), z(t))=g_\#(u)(t)$ and
\begin{equation} \label{eqn:splitting2conditions}
\begin{array}{ll}
&\xi(\cdot,t)\perp \phi_{a(t)}^{(ij)}\ \mbox{in}\
L^2(\R^n,e^{-\frac{a(t)}{2} |y|^2}dy),\\
& \lambda^{-3}(t){\partial_t}\lambda(t)=a(t)\
\mbox{and}\
\lambda(t_{0})=\lambda_{0},\\
&\p_t\alpha(t)-\lambda^2(t)a(t)\alpha(t)+\lambda(t)\p_t z(t)=0\ \text{and}\ \alpha(t_0)=\alpha_0.
\end{array}
\end{equation}
\end{prop}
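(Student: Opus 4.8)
The plan is to prove Proposition~\ref{Prop:Splitting2V} as a time-dependent (parametrized) version of Proposition~\ref{Prop:Splitting}: propagate the instantaneous splitting along $I_{t_0,\delta}$ by a fixed-point argument, with Proposition~\ref{Prop:Splitting} and Lemma~\ref{partialG} supplying the pointwise-in-$t$ input and the smallness $\lambda_0^2\delta\ll1$ closing everything on the whole interval. In the notation of the proof of Proposition~\ref{Prop:Splitting}, the conclusion amounts to finding $(a,b,z)\in\mathcal A_{t_0,\delta}\times\mathcal B_{t_0,\delta,\epsilon_0}\times\mathcal C_{t_0,\delta}$ with $G(\mu(t),u(\cdot,t))=0$ for all $t\in I_{t_0,\delta}$, where $\mu(t)=(a(t),b(t),z(t))$ and the test functions $\varphi^{(ij)}_{a(t)z(t)}$, the Gaussian weight $e^{-a(t)|y|^2/2}$ and the almost-solution $V_{\mu(t)}$ entering $G$ are built from $\mu(t)$ and from $\lambda(t),\alpha(t)$, these last being determined from $(a,z)$ through the ODEs of \eqref{eqn:splitting2conditions}: $\lambda=\lambda[a]$ explicitly, $\alpha=\alpha[a,z]$ from a linear ODE, both staying within $O(\lambda_0^2\delta)$ of $\lambda_0,\alpha_0$.

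\textbf{The iteration.} Since $u\in\mathcal U_{t_0,\delta,\epsilon_0,\lambda_0,\alpha_0}$, the bound \eqref{eq:init2} holds for some reference triple $(a_*,b_*,z_*)$, and I would iterate on a small sup-ball $\mathcal N$ around it inside $C^0(I_{t_0,\delta},[\tfrac14,1])\times C^0(I_{t_0,\delta},\bM^+_{n,\epsilon_0})\times C^0(I_{t_0,\delta},[-1,1]^n)$. Given $(a,b,z)\in\mathcal N$, solve for $\lambda[a]$ and $\alpha[a,z]$, form the rescaled function $v(\cdot,t):=u_{\lambda,z}(\cdot,t)$, and apply the functional $g$ of Proposition~\ref{Prop:Splitting} pointwise in $t$, setting $\mathcal S(a,b,z)(t):=g(v(\cdot,t))$. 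This is legitimate because \eqref{eq:init2}, together with $|\mu(t)-\mu_*(t)|$, $|\lambda(t)-\lambda_0|$, $|\alpha(t)-\alpha_0|$ all small, keeps $v(\cdot,t)$ in the domain of $g$ for every $t$. A fixed point of $\mathcal S$ is precisely a solution of $G(\mu(t),u(\cdot,t))=0$ subject to the ODEs of \eqref{eqn:splitting2conditions}, i.e. exactly the representation \eqref{eqn:splitting2}, and its value at $t_0$ is forced to be $g(u(\cdot,t_0))$, the static splitting of Proposition~\ref{Prop:Splitting} at $t_0$.

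\textbf{Contraction, ranges, regularity.} Next I would show $\mathcal S$ is a contraction on $\mathcal N$. The functional $g$ is $C^1$, hence Lipschitz, by Proposition~\ref{Prop:Splitting}, with Lipschitz constants governed by $(\partial_\mu G)^{-1}$, which is bounded by Lemma~\ref{partialG}. The dependence of $\mathcal S(a,b,z)$ on the input enters only through $\lambda[a],\alpha[a,z]$ and through the variable $y=\lambda(x-z)-\alpha$ used to build $v$; a Gronwall estimate gives the maps $a\mapsto\lambda[a]$ and $(a,z)\mapsto\alpha[a,z]$ Lipschitz constants $O(\lambda_0^2\delta)$, so a change of the input moves $v(\cdot,t)$ only by $O(\lambda_0^2\delta)$ times the input variation in the norm in which $g$ is Lipschitz---and the $\alpha$-equation \eqref{eqn:evolutionalpha} is exactly what keeps the residual $z$-translation from destabilizing this (differentiating the constraint in $t$ one sees that the naive $\dot z$-term cancels by virtue of \eqref{eqn:evolutionalpha}, since $\tfrac{d}{dt}y=\lambda^2 a\,y$ at fixed $x$). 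Taking $\lambda_0^2\delta$ small makes $\mathcal S$ a contraction, giving a unique fixed point in $\mathcal N$; combined with the uniqueness in Proposition~\ref{Prop:Splitting} at $t_0$ this yields the uniqueness of $g_\#$. The fixed point stays in $\mathcal A_{t_0,\delta}\times\mathcal B_{t_0,\delta,\epsilon_0}\times\mathcal C_{t_0,\delta}$ (in particular $b(t)>0$, $\|b(t)\|\le\epsilon_0$) because $|\mu(t)-\mu(t_0)|=O(\lambda_0^2\delta)$ and \eqref{eq:init2} propagates along the interval. Finally, $C^1$-in-$t$ regularity follows by bootstrapping: $t\mapsto u(\cdot,t)$ is $C^1$ into $\langle y\rangle^3L^\infty$ by the definition of $\mathcal U$, $t\mapsto(\lambda(t),\alpha(t))$ are $C^1$, and $g$ is $C^1$, so $t\mapsto g(v(\cdot,t))$ is $C^1$; and $C^1$-dependence of $g_\#$ on $u$ follows from $C^1$-dependence of the fixed point of $\mathcal S$ on the parameter $u$ entering it.

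\textbf{Main obstacle.} The delicate point is the contraction estimate for $\mathcal S$: one must track carefully how a perturbation of the trajectory $(a,b,z)$ propagates through the ODE flows $\lambda[a],\alpha[a,z]$ into the moving rescaled function $v(\cdot,t)$ and hence into $g(v(\cdot,t))$, and show the composite is contractive, which rests on the short-time smallness $\lambda_0^2\delta\ll1$ and on the structure of \eqref{eqn:evolutionalpha}. Everything else---verifying $v(\cdot,t)$ stays in the domain of $g$, staying in the admissible ranges, and the $C^1$-regularity and uniqueness---is routine bookkeeping on top of Proposition~\ref{Prop:Splitting} and Lemma~\ref{partialG}. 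An equivalent route is to differentiate $G(\mu(t),u(\cdot,t))=0$ in $t$ and, inverting $\partial_\mu G$ via Lemma~\ref{partialG} and adjoining the $\lambda,\alpha$-ODEs (with one integration by parts so that only $u$ and $\partial_t u\in\langle y\rangle^3L^\infty$ enter, not $\nabla u$), obtain a closed ODE for the augmented state $(a,b,z,\lambda,\alpha)$, solved by Picard--Lindel\"of; this delivers the $C^1$ trajectory directly.
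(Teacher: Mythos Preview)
Your approach is essentially the paper's own: both set up the orthogonality conditions $G_\#(\mu,u)(t):=G(\mu(t),u(\cdot,t))=0$ on the path space, with the slaving $\lambda=\lambda[a]$, $\alpha=\alpha[a,z]$ built in, and both close the argument by showing that the ``extra'' dependence of $G_\#$ on the path $\mu$ (the part coming through $\lambda[a],\alpha[a,z]$ rather than through $\mu(t)$ pointwise) is $O(\lambda_0^2\delta)$-small. The paper phrases this as the implicit function theorem applied to $G_\#$, splitting $\partial_\mu G_\#=A+B$ with $A(t)=\partial_\mu G(\mu(t),v)$ the frozen-in-time derivative (invertible by Lemma~\ref{partialG}) and $B$ the contribution of $\partial_\mu u_{\lambda(a),z}$; you phrase it as a Banach fixed point for $\mathcal S(\mu)=g(u_{\lambda[a],z})$, which is exactly $-A^{-1}$ applied to $G_\#$ with $B$ absorbed into the contraction constant. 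The two formulations are equivalent.

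One small correction worth flagging: your sentence ``$(a,z)\mapsto\alpha[a,z]$ has Lipschitz constant $O(\lambda_0^2\delta)$'' is not literally true---$\alpha$ depends on $\dot z$ and, after integrating by parts, has an $O(\lambda_0)$ dependence on $z$. What \emph{is} true (and what you correctly identify two lines later) is that only the combination $\lambda z+\alpha$ enters the variable $y$, and this combination has the desired $O(\lambda_0^2\delta)$-Lipschitz dependence on the path $z$ away from $z(t_0)$; the paper handles the analogous point by an integration by parts in its estimate of $B(t)$ (to avoid $\nabla_y v$), which is the same mechanism. So the logic is right, only the intermediate claim about $\alpha$ alone should be dropped.
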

\begin{proof}
For any function $a\in
\mathcal{A}_{t_0, \delta},$ we define a function
$$\lambda(a,t):=(\lambda_{0}^{-2}-2\int_{t_0}^{t}a(s)ds)^{-\frac{1}{2}}.$$ Let $\lambda(a)(t):=\lambda(a,t)$.
Next we define a function
$$\alpha(a,z)(t):=e^{\int_{t_0}^t\lambda^2(s)a(s)ds}\alpha_0-\int_{t_0}^t e^{\int_s^t \lambda^2(\gamma)a(\gamma) d\gamma}
\lambda(s)\p_t z(s) ds.$$
Define the $C^1$ map $G_\#$:
\begin{align*}
C^{1}(I_{t_0,
\delta},\R^+)\ \times
C^{1}(I_{t_0, \delta},\bM_n^+)\ \times C^{1}(I_{t_0,
\delta},\R^n)\ \times C^1(I_{t_0, \delta},\lra{y}^3 L^\infty(\R^n))\rightarrow C^{1}(I_{t_0, \delta},\mathbb{R}^{\frac{(n+2)(n+1)}{2}})
\end{align*}
as $$G_\#(\mu,u)(
t):=G(\mu(t),u_{\lambda (a),z}(\cdot,t)),$$ where $t\in I_{t_0,
\delta}$, $\mu=(a, b, z)$ and $G(\mu,u)$ is the same as in the proof of
Proposition \ref{Prop:Splitting}. The orthogonality conditions on
the fluctuation can be written as $G_\#(\mu,u)=0$. Using the
implicit function theorem we will first prove that for any
$\mu_0:=(a_0, b_0, z_0)\in \mathcal{A}_{t_0, \delta}\times
\mathcal{B}_{t_0, \delta, \epsilon_0}\times \mathcal{C}_{t_0, \delta}$ there exists a neighborhood
$\mathcal{U}_{\mu_0}$ of $V_{\mu_0}$ and a unique $C^1$ map
$g_\#:\mathcal{U}_{\mu_0}\rightarrow \mathcal{A}_{t_0, \delta}\times
\mathcal{B}_{t_0, \delta, \epsilon_0}\times \mathcal{C}_{t_0, \delta}$ such that $G_\#(g_\#(v),v)=0$
for all $v\in \mathcal{U}_{\mu_0}$.

We claim that $\p_\mu G_\#(\mu,u)$ is invertible, provided
$u_{\lambda (a),z}$ is close to $V_\mu$. We compute
\begin{equation}\label{linearization1}
\p_\mu G_\#(\mu,u)( t)=\p_\mu G(\mu(t), u_{\lambda (a),z}(\cdot,t))=A(
t) + B( t),
\end{equation}
where
\begin{equation} \label{linearization2}
A( t):= \p_{\mu}G(\mu, v)|_{v= u_{\lambda(a), z}},\ B(t):=
\p_vG(\mu, v)|_{v= u_{\lambda(a), z}}\p_\mu u_{\lambda (a), z}.
\end{equation}
Note that in \eqref{linearization2} $\p_v G(\mu, v)|_{v= u_{\lambda
(a), z}}$ is acting on $\p_\mu u_{\lambda (a), z}$ as an integral with respect to
$y$ and let $B(t)(y)$ be the integral kernel of this operator. We have shown in Lemma \ref{partialG}
that the first term on the r.h.s. is invertible, provided
$u_{\lambda (a), z}$ is close to $V_\mu$.

Now we show that for $\delta>0$ sufficiently small the second term
on the r.h.s. is small. Let $v:= u_{\lambda (a), z}$. Assuming for the
moment that $v$ is differentiable, we compute
$\p_av=-\p_a(\lambda^{-1})[\frac{2}{p-1}\lambda v-(y+\alpha)\nabla_yv]+
\lambda^{-1}\p_a\alpha \nabla_y v.$
Combining the last two equations together with Equation \eqref{linearization2} we obtain
\begin{equation*}
[B(t)\rho](t)=\int B(t)(y) [(-\frac{2}{p-1}\lambda v+(y+\alpha)\nabla_y v)(\p_a \lambda^{-1}) \rho+
\lambda^{-1}\nabla_y v(\p_a\alpha)\rho] dy.
\end{equation*}
Integrating by parts in the second term in the parenthesis gives
\begin{equation}\label{eqn:B(t)[y]}
[B( t) \rho](t)=-\int [(\frac{2}{p-1}\lambda v+v\nabla_y \cdot (y+\alpha))(\p_a \lambda^{-1})\rho
+\lambda^{-1}v\nabla_y \cdot (\p_a \alpha)\rho]B(t)[y] dy.
\end{equation}
Furthermore,
$\p_a(\lambda^{-1})\rho=\lambda(t)\int_{t_0}^{t}\rho(s)ds$
and
\begin{align*}
(\p_a\alpha) \rho& =e^{\int_{t_0}^t\lambda^2(s)a(s)ds}\alpha_0\int_{t_0}^t [a(s)\p_a \lambda^2(s)+\lambda^2(s)]\rho(s) ds\\
& -\int_{t_0}^t e^{\int_s^t \lambda^2(\gamma)a(\gamma) d\gamma}\p_t z(s)
[\lambda(s)\int_s^t (a(\gamma)\p_a \lambda^2(\gamma)+\lambda^2(\gamma))\rho(\gamma)d\gamma + \p_a\lambda(s)\rho(s)]ds.
\end{align*}
Now, using a density argument, we remove the
assumption  of the differentiability on $v$ and conclude that \eqref{eqn:B(t)[y]} holds without this assumption. Using this expression and
the inequality $\lambda(t) \le \sqrt{2}\lambda_0$, provided $\delta
\le (4 \sup a)^{-1}\lambda_0^{-2} \le 1/4 \lambda_0^{-2}$, we
estimate
\begin{equation}
\|B( t) \rho\|_{L^\infty([t_0,t_0+\delta])}\lesssim \delta
\lambda_0^{2}\|v
\|_{L^\infty}\|\rho\|_{L^\infty([t_0,t_0+\delta])}.
\end{equation}
So $B( t)$ is small, if $\delta \lesssim (\lambda_0^{2}\|v
\|_{L^\infty})^{-1}$, as claimed. This shows that $\p_\mu
G_\#(\mu,u)$ is invertible, provided $u_{\lambda (a),z}$ is close to
$V_\mu$. Proceeding as in the proof of Proposition
\ref{Prop:Splitting} we conclude
the proof of Proposition \ref{Prop:Splitting2V}.
\end{proof}

\section{A priori Estimates}\label{Section:APriori}
Let $u(x,t),$ $0\leq t\leq T$ be a solution to \eqref{NLH} with initial
condition $u_0\in U_{\epsilon_0}$ and
$v(y,\tau)=\lambda^{-\frac{2}{p-1}}(t)u(x,t)$, where $y=\lambda (x-z)-\alpha$
and $\tau(t):= \int_{0}^{t}\lambda^{2}(s)ds$. We assume that
there exist $C^{1}$ functions
$a(\tau)$, $b(\tau)$, and $c(\tau)$ such that
$v(y,\tau)$ can be represented as
\begin{align}\label{eqn:split2}
v(y,\tau)=V_{a(\tau)b(\tau)}+\xi(y,\tau),
\end{align}
where, recall,
\begin{align*}
V_{ab}:=\lb\frac{a+\frac{1}{2}}{p-1+y by}\rb^{\frac{1}{p-1}}
\end{align*}

where $\xi(\cdot,\tau)\perp \phi_{a(\tau)}^{(ij)}$
(see (\ref{eqn:split})),
$\lambda^{-3}(t)\partial_{t}\lambda(t)=a(\tau(t))$. Since $u_0\in U_{\epsilon_0}$, by condition \eqref{INI2}
\begin{equation}\label{eqn:splitB}
\|\lra{y}^{-3}\xi(y,0)\|_\infty\lesssim
\|b(0)\|^2.
\end{equation} In this
section we formulate a priori bounds on the fluctuation $\xi$ which
are proved in later sections.

Let the function $\tilde{\beta}(\tau)$ and the constant
$\kappa$ be defined as
\begin{equation}\label{FunBTau}
\tilde{\beta}(\tau):=(b(0)^{-1}+\frac{4p\tau}{(p-1)^2}I)^{-1}\
\mbox{and}\ \kappa:=\min\{\frac{1}{2},\frac{p-1}{2}\},
\end{equation}
and let $\beta(\tau)$ be the largest eigenvalue of $\tilde{\beta}(\tau)$.
For the functions $\xi(\tau),$ $b(\tau)$ and $a(\tau)$ we
introduce the following estimating functions (families of
semi-norms)
\begin{equation}
\label{majorants}
\begin{array}{lll}
M_{1}(T)&:=\max_{\tau\leq T}
\beta^{-2}(\tau)\|\lra{y}^{-3}\xi(\tau)\|_{\infty},\\
M_{2}(T)&:=\max_{\tau\leq
T}\|\xi(\tau)\|_{\infty},\\
A(T)&:=\max_{\tau\leq
T}\beta^{-2}(\tau)\left|a(\tau)-\frac{1}{2}+\frac{2\Tr b(\tau)}{p-1}\right|,\\
B(T)&:=\max_{\tau\leq
T}\beta^{-(1+\kappa)}(\tau)\|b(\tau)-\tilde{\beta}(\tau)\|.
\end{array}
\end{equation}

\begin{prop}\label{Prop:aprior}
Let $\xi$ be defined in \eqref{eqn:split2} and assume
$M_{1}(0),A(0), B(0) \lesssim 1$, $M_{2}(0)\ll 1$.
Assume there exists an interval $[0,T]$ such that for $\tau\in
[0,T]$
$$
M(\tau),\ A(\tau),\ B(\tau)\leq \beta^{-\kappa/2}(\tau).$$ Then in
the same time interval the parameters $a$, $b$ and the function
$\xi$ satisfy the following estimates
\begin{equation}\label{EstB}
|\frac{\p}{\p\tau}b(\tau)+\frac{4p}{(p-1)^{2}}b^{2}(\tau)|
\lesssim
\beta^3(\tau)+\beta^{3}(\tau)M_{1}(\tau)(1+A(\tau))+\beta^{4}(\tau)M_{1}^{2}(\tau)
+\beta^{2p}M_{1}^{2p}(\tau),
\end{equation} and
\begin{equation}\label{MajorE}
B(\tau)\lesssim
1+M_{1}(\tau)(1+A(\tau))+M_{1}^{2}(\tau)+M_{1}^{p}(\tau),
\end{equation}
\begin{equation}\label{EstA}
A(\tau)\lesssim
A(0)+1+\beta(0)M_{1}(\tau)(1+A(\tau))+\beta(0)M_{1}^{2}(\tau)+\beta^{2p-2}(0)M_{1}^{p}(\tau),
\end{equation}
\begin{equation}\label{M1}
\begin{array}{lll}
M_{1}(\tau)
&\lesssim &  M_{1}(0)+\beta^{\frac{\kappa}{2}}(0)[1+M_{1}(\tau)A(\tau)+M_{1}^{2}(\tau)+M_{1}^{p}(\tau)]\\
& &+[M_{2}(\tau)M_{1}(\tau)+M_{1}(\tau)M_{2}^{p-1}(\tau)],
\end{array}
\end{equation}
\begin{equation}\label{M2}
\begin{array}{lll}
M_{2}(\tau)&\lesssim& M_{2}(0)+\beta^{1/2}(0)M_{1}(0)
+\beta^{\frac{1}{3}}(0)M_1^{\frac{2}{3}}(T)M_2^{\frac{1}{3}}(T)+M_{2}^{2}(\tau)+M_{2}^{p}(\tau)\\
& &+\beta^{\frac{\kappa}{2}}(0)[1+M_{2}(\tau)+M_{1}(\tau)A(\tau)+M_{1}^{2}(\tau)+M_{1}^{p}(\tau)].
\end{array}
\end{equation}
\end{prop}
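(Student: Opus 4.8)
The plan is to convert the two structures assembled above---the evolution equation \eqref{xi-eq}, $\xi_\tau=-\cL_{ab}\xi+\mathcal N(\xi,a,b)+\mathcal F(a,b)$, and the dynamical system for the parameters $a,b$ (with $c=\tfrac12+a$ fixed as in the Introduction)---into a closed family of differential and integral inequalities for the estimating functions in \eqref{majorants}. Under the running hypotheses $M,A,B\le\beta^{-\kappa/2}$, each of these inequalities will gain a genuine power of the small quantity $\beta$ (or of $\beta(0)$) beyond the threshold $\beta^{-\kappa/2}$; this is exactly what makes the later bootstrap close, and here the proposition only records the inequalities. The first step is to obtain the parameter equations: differentiating the orthogonality relations $\xi(\cdot,\tau)\perp\phi^{(ij)}_{a(\tau)}$ in $\tau$, substituting \eqref{xi-eq}, and using the approximate eigenrelations \eqref{eqn:zeromodes}, one extracts an ODE for $b(\tau)$ of the form $\p_\tau b=(c-2a)b-\tfrac{2b}{p-1}\Tr b+\tfrac{4p}{(p-1)^2}b^2+\mathrm{Rem}_b$ together with an ODE for the combination $a-\tfrac12+\tfrac{2\Tr b}{p-1}$, whose remainders $\mathrm{Rem}_b,\mathrm{Rem}_a$ are pairings of $\mathcal N(\xi,a,b)+\mathcal F(a,b)$ against the (normalized) modes $\phi^{(ij)}_a$ in $L^2(\R^n,e^{-a|y|^2/2}\,\d y)$.

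For \eqref{EstB} the point is the algebraic identity $(c-2a)b-\tfrac{2b}{p-1}\Tr b=-b\,\bigl(a-\tfrac12+\tfrac{2\Tr b}{p-1}\bigr)$ forced by $c-2a=\tfrac12-a$; the definition of $A(\tau)$ then renders this drift of size $\lesssim\|b\|\beta^2(1+A)\lesssim\beta^3(1+A)$, while $\mathrm{Rem}_b$ is estimated by Cauchy--Schwarz in the Gaussian-weighted $L^2$, bounding $\xi$ there by its weighted sup-norm (hence by $\beta^2M_1$) and the source $\mathcal F$ by the approximation error of $V_{ab}$, which supplies the remaining terms $\beta^3M_1(1+A)$, $\beta^4M_1^2$ and $\beta^{2p}M_1^{2p}$. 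For \eqref{MajorE} one uses that the matrix $\tilde\beta(\tau)$ of \eqref{FunBTau} solves $\p_\tau\tilde\beta=-\tfrac{4p}{(p-1)^2}\tilde\beta^2$ \emph{exactly}; subtracting, the leading part of the equation for $w:=b-\tilde\beta$ is the linear map $w\mapsto-\tfrac{4p}{(p-1)^2}(bw+w\tilde\beta)$, which is dissipative enough---after dividing by the appropriate power of $\beta$---to run an integrating-factor argument whose accumulated source, renormalized by $\beta^{1+\kappa}$, yields \eqref{MajorE}. A parallel computation for $a-\tfrac12+\tfrac{2\Tr b}{p-1}$, whose leading dynamics is likewise contractive, produces \eqref{EstA}, the powers of $\beta(0)$ reflecting the normalization of the relevant mode and the initial data.

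For the fluctuation I would use Duhamel's principle for \eqref{xi-eq},
\[
\xi(\tau)=\mathcal U_{ab}(\tau,0)\,\xi(0)+\int_0^\tau\mathcal U_{ab}(\tau,s)\bigl[\mathcal N(\xi,a,b)(s)+\mathcal F(a,b)(s)\bigr]\,\d s,
\]
where $\mathcal U_{ab}(\tau,s)$ is the propagator generated by $-\cL_{ab}$ restricted to the orthogonal complement of $\operatorname{span}\{\phi^{(ij)}_a\}$, and then insert the linear propagator estimates of Section~\ref{Section:PropEst} in the two norms of interest. In the weighted norm the spectral gap $\kappa$ of $\cL_{ab}$ on the good subspace supplies the smallness factor $\beta^{\kappa/2}(0)$ multiplying the source, the nonlinearity supplies the powers $M_1^2,M_1^p$ and the couplings $M_2M_1,M_1M_2^{p-1}$, and together these give \eqref{M1}. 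In the unweighted norm one additionally invokes an interpolation between the weighted and unweighted sup-bounds---the origin of the term $\beta^{1/3}(0)M_1^{2/3}(T)M_2^{1/3}(T)$---along with the same nonlinear bookkeeping to obtain \eqref{M2}; the self-interaction terms $M_2^2,M_2^p$ there are harmless since $M_2(0)\ll1$ and $\beta$ is small.

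The main obstacle is that the four inequalities are mutually coupled---the $\xi$-bounds need parameter control and the parameter bounds need $\xi$-control---and the scheme closes only because each right-hand side carries a true power of $\beta$ beyond $\beta^{-\kappa/2}$; the crux is therefore the \emph{sharp} linear estimates of Section~\ref{Section:PropEst}, which must reproduce the exact gap $\kappa$ of $\cL_{ab}$ on the good subspace in \emph{both} the slowly decaying weighted norm and the unweighted norm, and must remain stable under the $\tau$-dependence of the orthogonality conditions (the modes $\phi^{(ij)}_{a(\tau)}$ drift with $a(\tau)$) and of the coefficients $a(\tau),b(\tau)$. A secondary difficulty is that $\int_0^\tau\beta(s)\,\d s$ grows logarithmically, so the comparison with $\tilde\beta$ cannot rest on a crude Gronwall inequality but must exploit the dissipativity of the quadratic term; controlling the interpolation loss in \eqref{M2} without degrading the $\beta$-gains is a further delicate point.
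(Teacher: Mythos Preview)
Your treatment of the parameter estimates \eqref{EstB}--\eqref{EstA} is essentially the paper's: the algebraic identity $(c-2a)b-\tfrac{2b}{p-1}\Tr b=-b\bigl(a-\tfrac12+\tfrac{2\Tr b}{p-1}\bigr)$ followed by the bound $\|b\|\beta^2A\lesssim\beta^3A$ is exactly Section~\ref{SEC:EstB}, and the integrating-factor argument for $\Gamma:=\tfrac12-a-\tfrac{2}{p-1}\Tr b$ is the paper's route to \eqref{EstA}. One simplification you miss for \eqref{MajorE}: rather than linearising $b-\tilde\beta$ and appealing to dissipativity of $w\mapsto-(bw+w\tilde\beta)$, the paper passes to $b^{-1}-\tilde\beta^{-1}$, whose evolution is governed by $-\p_\tau b^{-1}+\tfrac{4p}{(p-1)^2}I$ and for which $\tilde\beta^{-1}$ is an \emph{exact} solution; a direct integration then suffices, avoiding any Gronwall-type argument.

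For \eqref{M1}--\eqref{M2} there is a genuine missing step. You write Duhamel for $\xi$ with the propagator $\mathcal U_{ab}$ generated by $-\cL_{ab}$ and then invoke Section~\ref{Section:PropEst}. But the estimates of Section~\ref{Section:PropEst} are \emph{not} for $\mathcal U_{ab}$: they are for $V_\alpha(\tau,\sigma)$ generated by $-P^\alpha L_\alpha P^\alpha$ with a \emph{constant} parameter $\alpha=a(T)$. The bridge is the rescaling of Section~\ref{Section:Rescaling}: one introduces an auxiliary scale $\lambda_1(t)$ tangent to $\lambda$ at $t(T)$, new variables $z=(\lambda_1/\lambda)y$, $\sigma=\int_0^t\lambda_1^2$, and a new unknown $\eta(z,\sigma)=(\lambda/\lambda_1)^{2/(p-1)}\xi(y,\tau)$, so that the leading operator becomes $L_\alpha$ with \emph{frozen} coefficient $\alpha$; the time-dependence of $a(\tau),b(\tau)$ is then pushed into a small perturbation $W(a,b,\alpha)$ controlled by Proposition~\ref{NewTrajectory}. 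Only after this change of variables do the propagator bounds of Theorem~\ref{ProP} apply, and the estimate is read back to $\xi$ using $\eta(S)=\xi(T)$ at the endpoint. You correctly flag the drift of $a(\tau),b(\tau)$ as a difficulty, but the resolution---this rescaling---is the crux of Sections~\ref{SEC:EstM1}--\ref{SEC:EstM2} and is absent from your sketch. Note also that for \eqref{M2} the paper does not use the projected propagator at all: it treats the full potential $\tfrac{2p\alpha}{p-1+z\tilde\beta z}$ as a perturbation of $L_0+\tfrac{2p\alpha}{p-1}$ and runs Duhamel with the unprojected semigroup $e^{-(L_0+\frac{2p\alpha}{p-1})s}$, the interpolation $\|\lra{y}^{-2}\xi\|_\infty\le\|\lra{y}^{-3}\xi\|_\infty^{2/3}\|\xi\|_\infty^{1/3}$ entering precisely when this potential term is estimated.
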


Equations \eqref{EstB}-\eqref{EstA}, \eqref{M1} and \eqref{M2} will be proved
in Sections \ref{SEC:EstB}, \ref{SEC:EstM1} and \ref{SEC:EstM2} respectively.

\begin{cor}\label{cor:aprior}
Let $\xi$ be defined in \eqref{eqn:split2} and assume
$M_{1}(0),A(0), B(0)\lesssim 1$, $M_{2}(0)\ll 1$. Assume there
exists an interval $[0,T]$ such that for $\tau\in [0,T]$,
$$
M_{1}(\tau), A(\tau),\ B(\tau)\leq \beta^{-\kappa/2}(0).$$ Then in
the same time interval the parameters $a$, $b$ and the function
$\xi$ satisfy the following estimates
\begin{equation}\label{EstABM}
M_{1}(\tau),\ A(\tau),\ B(\tau)\lesssim 1, \ M_{2}(\tau)\ll 1.
\end{equation}
(In fact, $M_{i}(\tau) \lesssim M_{i}(0)  +
\beta^{\frac{\kappa}{2}}(0),\ i=1,2$.)
\end{cor}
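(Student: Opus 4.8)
The plan is to feed the differential/integral inequalities of Proposition \ref{Prop:aprior} into a bootstrap argument. I would argue by contradiction, or equivalently by a continuity/connectedness argument on the maximal interval on which the hypothesis $M_1(\tau),A(\tau),B(\tau)\le\beta^{-\kappa/2}(0)$ holds. The key observation is that $\beta(0)\lesssim\|b_0\|\ll1$, so every factor $\beta^{\kappa/2}(0)$, $\beta^{1/2}(0)$, $\beta^{1/3}(0)$ appearing on the right-hand sides of \eqref{MajorE}--\eqref{M2} is a small parameter that can be used to absorb the ``bad'' (superlinear or merely $O(1)$-coefficient) terms back into the left-hand sides.

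The steps, in order. First, from \eqref{M2}, with the a priori bound $M_1,M_2\le\beta^{-\kappa/2}(0)$ inserted into the superlinear terms, the terms $M_2^2(\tau)+M_2^p(\tau)$ and $\beta^{\frac13}(0)M_1^{2/3}M_2^{1/3}$ and the bracket multiplied by $\beta^{\kappa/2}(0)$ all carry a small prefactor relative to $M_2(\tau)$; absorbing them gives $M_2(\tau)\lesssim M_2(0)+\beta^{1/2}(0)M_1(0)+\beta^{\kappa/2}(0)$, hence in particular $M_2(\tau)\ll1$ since $M_2(0)\ll1$ and $\beta(0)M_1(0)$ is small. Second, plug this smallness of $M_2$ into \eqref{M1}: the factor $M_2(\tau)M_1(\tau)+M_1(\tau)M_2^{p-1}(\tau)$ becomes $(\text{small})\cdot M_1(\tau)$, absorbable on the left; the bracket $\beta^{\kappa/2}(0)[1+M_1 A+M_1^2+M_1^p]$ is, using the a priori ceiling, bounded by $\beta^{\kappa/2}(0)$ times something $\lesssim\beta^{-\kappa}(0)$, i.e. $\lesssim\beta^{-\kappa/2}(0)$ — so I need to be slightly careful here and rather iterate: first get $M_1(\tau)\lesssim M_1(0)+\beta^{\kappa/2}(0)\cdot(\text{ceiling})$, which only yields $M_1\lesssim\beta^{-\kappa/2}(0)$, and then re-feed. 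The cleaner route is to run \eqref{EstA} and \eqref{MajorE} first: \eqref{MajorE} gives $B(\tau)\lesssim1+M_1(\tau)(1+A(\tau))+\dots$, and \eqref{EstA} gives $A(\tau)\lesssim A(0)+1+\beta(0)[\dots]$ where the bracket, bounded via the ceiling, is multiplied by the genuinely small $\beta(0)$ (not $\beta^{\kappa/2}$), so $A(\tau)\lesssim A(0)+1\lesssim1$. With $A(\tau)\lesssim1$ in hand, \eqref{M1} becomes $M_1(\tau)\lesssim M_1(0)+\beta^{\kappa/2}(0)[1+M_1(\tau)+M_1^2(\tau)+M_1^p(\tau)]+(\text{small})M_1(\tau)$; since $M_1\le\beta^{-\kappa/2}(0)$ the superlinear terms are again $\beta^{\kappa/2}(0)\cdot M_1^p\le\beta^{\kappa/2}(0)\beta^{-\kappa(p-1)/2}M_1\le\dots$ — manageable because $\kappa<1$ keeps the powers of $\beta(0)$ nonnegative — so after absorption $M_1(\tau)\lesssim M_1(0)+\beta^{\kappa/2}(0)$. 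Feeding this back into \eqref{MajorE} gives $B(\tau)\lesssim1$. This establishes the parenthetical refined bounds and, a fortiori, \eqref{EstABM}.

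Finally I would package this as a continuity argument: let $T_0$ be the supremum of times $T$ for which $M_1,A,B\le\beta^{-\kappa/2}(0)$ on $[0,T]$ (nonempty by the hypotheses $M_1(0),A(0),B(0)\lesssim1$ and continuity of the estimating functions in $T$, which are maxima of continuous functions). On $[0,T_0]$ the above chain gives the strictly better bounds $M_1,A,B\lesssim1$ with implied constant independent of $T_0$, and $\beta^{-\kappa/2}(0)\gg1$; hence the ceiling is not saturated at $T_0$, so by continuity it is not saturated slightly beyond $T_0$ either, contradicting maximality unless $T_0=T$. Therefore \eqref{EstABM} holds on all of $[0,T]$.

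The main obstacle is bookkeeping the powers of $\beta(0)$: one must check that every time the a priori ceiling $\beta^{-\kappa/2}(0)$ is substituted into a superlinear term $M_1^k$ or $M_2^k$, the resulting net power of $\beta(0)$ (namely $\beta^{\kappa/2}(0)\cdot\beta^{-k\kappa/2}(0)$ from \eqref{M1}, or $\beta^{1/3}(0)\cdot\beta^{-\kappa}(0)$ from the cross term in \eqref{M2}) stays positive, so that the term is genuinely absorbable and the constants do not blow up as $b_0\to0$; this is exactly where $\kappa=\min\{1/2,(p-1)/2\}<1$ and the specific exponents $\beta^{1/2},\beta^{1/3}$ are used, and where the distinction between the ``cheap'' smallness $\beta^{\kappa/2}(0)$ and the ``expensive'' smallness $\beta(0)$ in \eqref{EstA} matters. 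A secondary point is to make sure the order of deduction (first \eqref{MajorE}/\eqref{EstA}, then \eqref{M2}, then \eqref{M1}, then \eqref{MajorE} again) is genuinely non-circular; since each step only uses the a priori ceiling plus outputs of strictly earlier steps, it is.
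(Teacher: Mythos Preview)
Your overall chain---first \eqref{EstA} to get $A\lesssim 1$ (the prefactors $\beta(0)$ and $\beta^{2p-2}(0)$ there are strong enough to beat the ceiling $\beta^{-\kappa/2}(0)$, so this step is clean), then close $M_2$ and $M_1$, then read off $B$ from \eqref{MajorE}---is exactly the paper's approach; the paper does $M_2$ before $M_1$, which is immaterial.

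There is, however, a real gap in your absorption step for \eqref{M1}. You assert that $\beta^{\kappa/2}(0)M_1^{k}$, after substituting the ceiling $M_1\le\beta^{-\kappa/2}(0)$, carries a net positive power of $\beta(0)$; but
\[
\beta^{\kappa/2}(0)\,M_1^{k}\ \le\ \beta^{\kappa/2}(0)\,\beta^{-(k-1)\kappa/2}(0)\,M_1\ =\ \beta^{(2-k)\kappa/2}(0)\,M_1,
\]
whose exponent is zero for $k=2$ and strictly negative for $k>2$. So neither $\beta^{\kappa/2}(0)M_1^{2}$ nor (when $p>2$) $\beta^{\kappa/2}(0)M_1^{p}$ can be absorbed this way, and the same problem hits the $M_1$-block on the right of \eqref{M2}. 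The paper's (terse) remedy is a forbidden-region argument, not absorption: since $M_1,M_2$ are continuous nondecreasing in $\tau$ with $M_1(0)\lesssim 1$ and $M_2(0)\ll 1$, the polynomial inequalities confine $(M_1,M_2)$ to the small connected component near the origin, which they cannot leave. Your closing continuity paragraph has the right instinct but is aimed at the wrong threshold---you run it on the weak ceiling $\beta^{-\kappa/2}(0)$, which is already assumed on all of $[0,T]$; redirect it to the strong thresholds $M_1\le K_1\sim 1$, $M_2\le K_2\ll 1$ (inside $[0,T]$) and the argument closes.
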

\begin{proof} Since $\beta(\tau) \le \beta(0)\ll 1$,
we have
\begin{equation}\label{ApriorEST}
M_{1}(\tau), \ B(\tau),\ A(\tau)\leq
\beta^{-\frac{\kappa}{2}}(0)\leq \beta^{-\frac{\kappa}{2}}(\tau),
\end{equation}
where, recall, the definitions of $\beta(\tau)$ and $\kappa$ are
given in (\ref{FunBTau}). Thus the conditions of the proposition above are satisfied. Since
$M_{1}(\tau)\leq \beta^{-\frac{\kappa}{2}}(0)$, we can solve
\eqref{EstA} for $A(\tau)$. We substitute the result into Equations
\eqref{M1} - \eqref{M2} to obtain inequalities involving only the
estimating functions $M_{1}(\tau)$ and $M_{2}(\tau)$. Consider the
resulting inequality for $M_{2}(\tau)$. The only terms on the
r.h.s., which do not contain $\beta(0)$ to a power at least
$\kappa/2$ as a factor, are $M_{2}^{2}(\tau)$ and $M_{2}^{p}(\tau)$.
Hence for $M_{2}(0)\ll 1$ this inequality implies that $M_{2}(\tau)
\lesssim M_{2}(0)  + \beta^{\frac{\kappa}{2}}(0)$. Substituting this
result into the inequality for $M_{1}(\tau)$ we obtain that
$M_{1}(\tau) \lesssim M_{1}(0)  + \beta^{\frac{\kappa}{2}}(0)$ as
well. The last two inequalities together with \eqref{MajorE} and
\eqref{EstA} imply the desired estimates on $A(\tau)$ and $B(\tau)$.
\end{proof}

\section{Proof of Main Theorem \ref{maintheorem}}\label{SecMain}
We start with an auxiliary statement which eases the induction step.
Recall the notation $I_{t_0, \delta}:= [t_0, t_0 +\delta]$. We say
that $\lambda(t)$ is \textit{admissible} on $I_{t_0, \delta}$ if
$\lambda\in C^{1}(I_{t_0, \delta},\mathbb{R}^{+})\ \mbox{and}\
\lambda^{-3}\partial_{t}\lambda \in [1/4, 1]$. Recall that $t_*$
is the maximal existence time defined in Section \ref{SEC:Intro}.

\begin{lemma} \label{induction}
Assume $u \in C^{1}((0,t_*), \langle x\rangle^3 L^\infty),\ t_0 \in
[0, t_*)$ and $u_{\lambda_0} (\cdot, t_0) \in U_{\epsilon_0/2}$ for
some $\lambda_0$ and for $\epsilon_0$ given in Proposition
\ref{Prop:Splitting}. Then there are $\delta = \delta (\lambda_0,
u)>0$ and $\lambda (t)$, admissible on $I_{t_0, \delta}$, s.t.
\eqref{eqn:splitting2} and \eqref{eqn:splitting2conditions} hold on
$I_{t_0, \delta}$.
\end{lemma}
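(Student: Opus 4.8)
The plan is to deduce this local-in-time reparametrization from Proposition \ref{Prop:Splitting2V} by verifying its hypotheses at the chosen starting time $t_0$ and then using continuity to propagate them over a short interval. First I would observe that the hypothesis $u_{\lambda_0}(\cdot, t_0)\in U_{\epsilon_0/2}$ means, by the definition of $U_{\epsilon_0/2}$, that there exist $a_0\in[1/4,1]$, $b_0\in\bM_n^+$ with $0<b_0\le\epsilon_0/2$, and $z_0\in\R^n$ such that $\|e^{-\frac13|y|^2}(u_{\lambda_0,z_0}(\cdot,t_0)-V_{a_0b_0})\|_\infty\le C\|b_0\|^2$. Applying Proposition \ref{Prop:Splitting} at $t_0$, we get the splitting $u_{\lambda_0,z_0}(\cdot,t_0)=V_{\tilde a_0\tilde b_0}+\xi_0$ with $\xi_0\perp\phi^{(ij)}_{\tilde a_0}$ and, by \eqref{eq:vv0}--\eqref{Ineq:IC}, $|\tilde a_0-a_0|+\|\tilde b_0-b_0\|\lesssim\|b_0\|^2$, $|\tilde z_0-z_0|\lesssim\|b_0\|$, and $\|\langle y\rangle^{-3}\xi_0\|_\infty\lesssim\|b_0\|^2$. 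In particular $\tilde a_0\in[1/4,1]$ (after shrinking $\epsilon_0$), $\tilde b_0>0$ with $\tilde b_0\le\epsilon_0$, and $\|\tilde b_0^{-1}\|\,\|\langle y\rangle^{-3}\xi_0\|_\infty\lesssim\|b_0\|^2/\|b_0\|\lesssim\|b_0\|\ll1$. This furnishes constant functions $a\equiv\tilde a_0$, $b\equiv\tilde b_0$, $z\equiv\tilde z_0$ on any interval $I_{t_0,\delta}$ as a first candidate.

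Next I would set up the parameters $\lambda(t)$ and $\alpha(t)$ attached to these candidate functions: define $\lambda(t)$ by $\lambda(t_0)=\lambda_0$ and $\lambda^{-3}\partial_t\lambda=a(t)$, which is solvable on a small interval and gives an \emph{admissible} $\lambda$ since $a(t)\in[1/4,1]$; and define $\alpha(t)$ by $\alpha(t_0)=\alpha_0$ together with $\partial_t\alpha-\lambda^2a\alpha+\lambda\partial_t z=0$ (with $\partial_t z\equiv 0$ for the constant candidate). These are exactly the constraints in \eqref{eqn:splitting2conditions}. The point of invoking Proposition \ref{Prop:Splitting2V} rather than just Proposition \ref{Prop:Splitting} is that we need the splitting to hold simultaneously for all $t\in I_{t_0,\delta}$ with $C^1$ dependence on $t$; its proof shows that for $\lambda_0^2\delta\ll1$ the operator $\p_\mu G_\#$ is invertible and the implicit function theorem applies on the function spaces $\mathcal A_{t_0,\delta}\times\mathcal B_{t_0,\delta,\epsilon_0}\times\mathcal C_{t_0,\delta}$.

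The verification that $u\in\mathcal U_{t_0,\delta,\epsilon_0,\lambda_0,\alpha_0}$ requires the a priori bound \eqref{eq:init2} to hold throughout $I_{t_0,\delta}$, not just at $t_0$, so the main work is a continuity argument. Since $u\in C^1((0,t_*),\langle x\rangle^3 L^\infty)$, the map $t\mapsto u_{\lambda,z}(\cdot,t)$ is continuous into $\langle y\rangle^3 L^\infty$ (the rescaling by $\lambda(t),z(t),\alpha(t)$ being smooth in $t$), so the quantity $\|b^{-1}(t)\|\,\|\langle y\rangle^{-3}(u_{\lambda,z}(\cdot,t)-V_{a(t)b(t)})\|_\infty$ is continuous in $t$ and is $\ll1$ at $t=t_0$; hence it stays $\ll1$ on some $I_{t_0,\delta'}$. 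Choosing $\delta=\delta(\lambda_0,u)\le\delta'$ small enough that also $\lambda_0^2\delta\ll1$ and $\delta\le(4\sup a)^{-1}\lambda_0^{-2}$, both hypotheses of Proposition \ref{Prop:Splitting2V} are met, and the proposition delivers the unique $C^1$ map $g_\#$, hence functions $(a(t),b(t),z(t))=g_\#(u)(t)$ and a fluctuation $\xi(y,t)$ satisfying \eqref{eqn:splitting2} and \eqref{eqn:splitting2conditions} on $I_{t_0,\delta}$, which is exactly the claim. The step I expect to be the main obstacle is the uniform-in-$t$ control: one must be careful that shrinking $\delta$ to get \eqref{eq:init2} does not conflict with the smallness of $\delta$ needed for invertibility of $\p_\mu G_\#$, and that the $C^1$-in-$t$ regularity of $u_{\lambda,z}$ (which depends on $\lambda,\alpha$ that are themselves being solved for) is genuinely available — this is where the a priori differentiability $u\in C^1((0,t_*),\langle x\rangle^3L^\infty)$ from Theorem \ref{THM:Local} is essential and must be threaded through the rescaling.
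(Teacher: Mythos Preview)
Your proposal is correct and follows essentially the same route as the paper: you verify $u\in\mathcal{U}_{t_0,\delta,\epsilon_0,\lambda_0,\alpha_0}$ by producing candidate parameters at $t_0$ (via Proposition~\ref{Prop:Splitting}) and extending the smallness condition \eqref{eq:init2} by continuity in $t$, then invoke Proposition~\ref{Prop:Splitting2V}. The paper's own proof is only two sentences asserting exactly this implication, so you have in fact supplied the details the paper leaves implicit.
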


\begin{proof}
The conditions $u \in C^{1}((0,t_*), \langle x\rangle^3L^\infty)$
and $u_{\lambda_0} (t_0) \in U_{\epsilon_0/2}$ imply that there is a
$\delta = \delta (\lambda_0,u)$ s.t. $u \in \mathcal{U}_{t_0, \delta,
\eps_0, \lambda_0, \alpha_0}$.
By Proposition \ref{Prop:Splitting2V}, the latter inclusion implies
that there is $\lambda (t)$, admissible on $I_{t_0, \delta},\
\lambda (t_0)= \lambda_0, $ s.t. \eqref{eqn:splitting2} and
\eqref{eqn:splitting2conditions} hold on $I_{t_0, \delta}$.
\end{proof}


Choose $b_0$ so that $C\|b_{0}\|^2\leq \frac{1}{2}\epsilon_{0}$ with $C$
the same as in \eqref{INI2} and with $\epsilon_{0}$ given in
Proposition \ref{Prop:Splitting}. Let
$v_{0}(y):=\lambda_{0}^{-\frac{2}{p-1}}u_{0}(z_0+\lambda_{0}^{-1}y).$
Then $v_{0}\in U_{\frac{1}{2}\epsilon_{0}}$, by condition
\eqref{INI2} with $m=3$, on the initial conditions. Hence Proposition
\ref{Prop:Splitting}  holds for $v_{0}$ and we have the splitting
\eqref{eqn:split}. Denote $g(v_0) =: (a(0), b(0),z(0))$.


Furthermore, by Lemma \ref{induction} there are $\delta_1 > 0$ and
$\lambda_1 (t)$, admissible on $[0, \delta_1]$, s.t. $\lambda_1 (0)=
\lambda_0$ and Equations \eqref{eqn:splitting2} and
\eqref{eqn:splitting2conditions} hold on the interval $[0, \delta_1]$.
Hence, in particular, the estimating functions $M_1(\tau),\
M_2(\tau),\ A(\tau)$ and $B(\tau)$ of Section 5 are defined on the
interval $[0, \delta_1]$. We will write these functions in the
original time $t$, i.e. we will write $M_i(t)$ for $M_i(\tau (t))$
where $\tau(t)=\int_{0}^{t}\lambda^{2}(s)ds$.

Recall the definitions of $\beta(\tau)$ and $\kappa$ given in
\eqref{FunBTau}. Since $\beta(0)$ is the largest eigenvalue of $b(0)$, by Equation
\eqref{INI2} and Proposition \ref{Prop:Splitting},   $A(0)$,
$M_{1}(0)\lesssim 1$ and $M_{2}(0) \ll 1$, while $B(0)\ll 1$, by definition. We have, by continuity,
that
\begin{equation}\label{ApriorEST}
M_{1}(t),\ A(t), \ B(t)\leq \beta^{-\frac{\kappa}{2}}(0),
\end{equation} for a sufficiently small time interval, which we can take to be
$[0, \delta_1]$. Then by Corollary \ref{cor:aprior} we have that for
the same time interval
\begin{equation}\label{UtimateEST}
M_{1}(t),\ A(t),\ B(t)\lesssim 1, M_{2}(t)\ll 1.
\end{equation}

Equation \eqref{UtimateEST} implies that $u_{\lambda_1} (\cdot,
\delta_1) \in U_{\epsilon_0/2}$ (indeed, by the definitions of
$M_{1}(t)$ and $M_{2}(t)$ we have $\|\langle
y\rangle^{-3}(u_{\lambda_{1}} (\cdot,t)-V_{a(t),b(t)})\|\leq
M_{1}(t) |b(t)|^2\ \text{and}\ \|u(t)\|_{\infty} \lesssim
\lambda_{1}^{\frac{2}{p-1}}(t) [1 + M_1(t) +M_2(t)]$). Now we can
apply Lemma \ref{induction} again and find $\delta_2 > 0$ and
$\lambda_2 (t)$, admissible on $[0, \delta_1 +\delta_2]$, s.t.
$\lambda_2 (t)= \lambda_1 (t)$ for $t \in [0, \delta_1]$ and
Equations \eqref{eqn:splitting2} and
\eqref{eqn:splitting2conditions} hold on the interval $[0, \delta_1
+\delta_2]$.

We iterate the procedure above to show that there is a maximal time
$t^* \le t_*$  ($t_*$ is the maximal existence time), and a function
$\lambda(t)$, admissible on $[0, t^*)$, s.t. \eqref{eqn:splitting2}
and \eqref{eqn:splitting2conditions} and \eqref{UtimateEST} hold on
$[0, t^*)$. We claim that $t^* = t_*$ and $t^* < \infty$ and
$\lambda(t^*) = \infty$. Indeed, if $t^* < t_*$ and $\lambda(t^*) <
\infty$, then by the a priori estimate \eqref{UtimateEST}
$u_{\lambda} (t) \in U_{\epsilon_0/2}$ for any $t \le t^*.$ By Lemma
\ref{induction}, this implies that there is $\delta
>0$ and $\lambda_{\#}(t)$, admissible on $[0, t^*+\delta]$, s.t.
\eqref{eqn:splitting2} and \eqref{eqn:splitting2conditions} hold on
$[0, t^*+\delta]$ and $\lambda_{\#}(t) = \lambda(t)$ on $[0, t^*)$,
which would contradict the assumption that the time $t^*$ is
maximal. Hence
\begin{equation}\label{eq:threePossibilities}
\text{either}\ t^* = t_*\ \text{or}\ t^* < t_*\ \text{and}\
\lambda(t^*) = \infty.
\end{equation} The second case in \eqref{eq:threePossibilities} is ruled out as
follows. Using the relation between the functions $u(x,t)$ and
$v(y,\tau)$ we obtain the following a priori estimate on the
(non-rescaled) solution $u(x,t)$ of equation \eqref{NLH}:
\begin{equation}\label{junk}
\|u(t)\|_{\infty} \lesssim \lambda(t)^{\frac{2}{p-1}} [1 + M_1(t)
+M_2(t)],\end{equation} where we used the fact
$\|\xi(\cdot,\tau (t))\|_{\infty}\lesssim
M_{1}(t)+M_{2}(t)$. By the estimate \eqref{UtimateEST} above the
majorants $M_{j}(t)$ are uniformly bounded and therefore
\begin{equation}\label{upperbound}
\|u(t)\|_{\infty} \lesssim  \lambda(t)^{\frac{2}{p-1}}\  \mbox{for}\
t < t^{*}.
\end{equation}
Moreover \eqref{eqn:split2} and the fact $\|\langle
y\rangle^{-3}\xi\|_{\infty}\lesssim \|b(t)\|^2,$
implied by $M_{1}\lesssim 1$, give
\begin{equation}
|u(0,t)|\ge \lambda(t)^\frac{2}{p-1}\lsb \lb\frac{
c(t)}{p-1}\rb^\frac{1}{p-1}-C \|b(t)\|^2 \rsb\rightarrow\infty,
 \label{eqn:sqeefdsfew}
\end{equation}
as $t\uparrow t^*$, which implies that $t^* \ge t_*$ and therefore
$t_* = t^*$.

Now we consider the first case in \eqref{eq:threePossibilities}. In
this case we must have either $t^* = t_* = \infty$ or $t^* = t_* <
\infty$ and $\lambda(t^*) = \infty$, since otherwise we would have
existence of the solution on an interval greater than $[0, t_*)$.
Finally, the case $t^* = t_* = \infty$ is ruled out in the next
paragraph.
This proves the claim which can reformulated as: there is a function
$\lambda(t)$, admissible on $[0, t_*)$, s.t. \eqref{eqn:splitting2}
and \eqref{eqn:splitting2conditions} and \eqref{UtimateEST} hold on
$[0, t_*)$ and $\lambda(t) \rightarrow \infty$ as $t \rightarrow
t_{*}$. This gives the statements (1) and (2) of Theorem
\ref{maintheorem}.


By the definitions of $A(t)$ and $B(t)$ in \eqref{majorants} and
the facts that $A(t), B(t)\lesssim 1 $ proved above, we have that
\begin{equation}\label{EstABtau}
a(t)-\frac{1}{2}=-\frac{2}{p-1}\Tr b(\tau)+\O{\beta^{2}(\tau)},\
b(t)=\tilde{\beta}(\tau)+\O{\beta^{1+\kappa/2}(\tau)},
\end{equation}
where, recall, $\tau = \tau(t)=\int_{0}^{t}\lambda^{2}(s)ds$. Hence
$a(t)-\frac{1}{2}=O(\beta(\tau))$.  Recall that
$a=\lambda^{-3}\partial_{t}\lambda$, which can be rewritten as
$\lambda^{-2}(t)=\lambda_{0}^{-2}-2 \int_{0}^{t}a(s)ds$ or
$\lambda(t)=[\lambda_{0}^{-2}-2 \int_{0}^{t}a(s)ds]^{-\frac{1}{2}}.$
Assume $t^*=\infty.$ Since $|a(t)-\frac{1}{2}|=O(\beta(\tau))$, there
exists a time $ t^{**}< \infty$ such that $\lambda_{0}^{-2}=2
\int_{0}^{t^{**}}a(s)ds$, i.e. $\lambda(t)\rightarrow \infty$ as
$t\rightarrow t^{**}$. This contradicts the assumption that
$\lambda(t)$ is defined on $[0,t^*=\infty).$ Hence $t^*<\infty$.
This completes the proof of Statements (1) and (2) of Theorem
\ref{maintheorem}.

Now we prove statement (3) of Theorem \ref{maintheorem}.
Equation \eqref{EstABtau} implies $b(t)\rightarrow 0$ and
$a(t)\rightarrow \frac{1}{2}$ as $t\rightarrow t^*$.  By the
analysis above and the definitions of $a$, $\tau$ and $\tilde{\beta}$ (see
\eqref{FunBTau}) we have
$$\lambda(t)=(t^*-t)^{-\frac{1}{2}}(1+o(1)),\ \tau(t)=-\ln|t^*-t|(1+o(1)),$$
and $$\tilde{\beta}(\tau(t))=-\frac{(p-1)^{2}}{4p\ln|t^*-t|}(I+o(1)).$$
This gives the first equation in \eqref{eq:blowdy}. By
\eqref{EstABtau} and the relation $c=a + \frac{1}{2}$ we
have the second and third equations in \eqref{eq:blowdy}.
Finally, let $\zeta(t)=z(t)+\alpha(t)/\lambda(t)$. By \eqref{eqn:evolutionalpha}
and \eqref{Ineq:z_0} we obtain the last equation in
\eqref{eq:blowdy}. This completes the
proof of Theorem \ref{maintheorem}.
\begin{flushright}
$\square$
\end{flushright}


\section{Lyapunov-Schmidt Splitting (Effective Equations)} \label{Section:Splitting}
According to Lemma \ref{induction}, the
solution $v(y,\tau)$ of \eqref{eqn:BVNLH} can be decomposed as
\eqref{eqn:split2}, with the parameters $a$ and $b$, and the
fluctuation $\xi$ depending on time $\tau$:
\begin{equation}
v=V_{ab}+\xi,\ \xi\bot\ \phi_{a}^{(ij)}, \, \, 0 \leq i, j \leq n, \label{eqn:split3}
\end{equation}
 in the sense of $L^2(\R^n,\e^{-a|y|^2/2}\d y)$, where $V_{ab}:=\lb \frac{c}{p-1+y b y}
\rb^{\frac{1}{p-1}}$, $c=a+\frac{1}{2}$ and
$\phi_{a}^{(ij)}$ are defined in the beginning of
Section \ref{Section:Reparam}. Plugging the decomposition
\eqref{eqn:split3} into \eqref{eqn:BVNLH} gives the equation (see Appendix 2 for details)
\begin{equation}
\xi_\tau=-{\mathcal L}_{ab}\xi+{\mathcal N}(\xi,a,b)+{\mathcal
F}(a,b) \label{eqn:FluctuationGauged}
\end{equation}
where
\begin{align}
{\mathcal L}_{ab}&=-\Delta_y+ay\cdot\nabla_y+\frac{2 a}{p-1}-\frac{pc}{p-1+y by},\label{eqn:LinOp} \\
{\mathcal N}(\xi,a,b)&=|\xi+V_{ab}|^{p-1}(\xi+V_{ab})-V_{ab}^p-p V_{ab}^{p-1}\xi  \\
{\mathcal F}(a,b)&=\frac{1}{p-1}\left[\Gamma_{0}
+\sum_{j,k}\Gamma_{jk}\frac{(p-1)ay_j y_k}{p-1+y by}
+G_1\right] V_{ab},\label{eqn:Forcing}
\end{align}
with the functions $\Gamma_{jk}\, \, (1\leq j,k \leq n)$ given as
\begin{align}
\Gamma_0&:=-\frac{c_\tau}{c}+(c-2a)-\frac{2}{p-1}\Tr b,\label{eqn:Gamma0}\\
\Gamma_{jk}&:=\frac{1}{a(p-1)}\lb\p_\tau {b_{jk}}-(c-2a)b_{jk}+\frac{2b_{jk}}{p-1}\Tr b
+\frac{4p}{(p-1)^2}\sum_{i=1}^n b_{ij}b_{ik} \rb,\label{eqn:Gamma1} \\
G_1&:=-\frac{4p(y by)(\sum_{i=1}^n (\sum_{j=1}^n b_{ij}y_j)^2)}{(p-1)^2(p-1+y by)^2}.\label{eqn:G_1}
\end{align}

\begin{prop}
\label{Prop:ForcingBounds} If $A(\tau),\
B(\tau)\leq{\beta}^{-\frac{\kappa}{2}}(\tau)$
and $1/4\leq c(0)\leq 1$, then
\begin{align}
\|\lra{y}^{-3}{\cal
F}\|_\infty&=\O{|\Gamma_0|+\sum_{j,k}|\Gamma_{jk}|+{\beta}^\frac{5}{2}}\label{eqn:Festimate0}
\end{align}
and \begin{align}\|{\cal
F}\|_\infty&=\O{|\Gamma_0|+\frac{1}{\beta}\sum_{j,k}|\Gamma_{jk}|+\beta}.
\label{eqn:Festimate}
\end{align}
where, recall, $\lra{y}:=(1+y_1^2+\cdots+y_n^2)^{\frac{1}{2}}$. Furthermore
we have for ${\cal N}={\cal N}(\xi,a,b)$
\begin{equation}
|{\cal N}|\lesssim
|\xi|^{2}+|\xi|^{p}.
\label{eqn:69a}
\end{equation}
\end{prop}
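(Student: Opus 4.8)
The plan is to estimate each of the three terms making up $\mathcal F(a,b)$ in \eqref{eqn:Forcing} separately, using the explicit formulas \eqref{eqn:Gamma0}--\eqref{eqn:G_1} together with the hypothesis $\|b\|\lesssim\beta$ and the bounds $A(\tau),B(\tau)\le\beta^{-\kappa/2}(\tau)$. First I would record the pointwise bounds on the building blocks: since $c=a+\tfrac12\in[\tfrac14+\tfrac12,\,1+\tfrac12]$ roughly and $b\ge 0$, we have $0< V_{ab}\lesssim 1$ uniformly, $\tfrac{yby}{p-1+yby}\le 1$, and $\tfrac{ay_jy_k}{p-1+yby}\lesssim 1$ (the denominator dominates the off-diagonal quadratic term since $b$ is symmetric positive). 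For the $G_1$ term \eqref{eqn:G_1}, one bounds $\sum_i(\sum_j b_{ij}y_j)^2\le \|b\|\, (yby)$ and then $\tfrac{(yby)^2}{(p-1+yby)^2}\le 1$, giving $|G_1|\lesssim\|b\|\lesssim\beta$; similarly with the weight $\lra{y}^{-3}$ one can afford to keep a factor $\|b\|$ and still get $\lesssim\beta$, but in fact one needs the sharper $\beta^{5/2}$ in \eqref{eqn:Festimate0}, which I address below.

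The main point is that $\mathcal F$ is multiplied by $V_{ab}$, which decays, and that the coefficients $\Gamma_0$, $\Gamma_{jk}$ are exactly the quantities whose smallness is controlled by $A,B$. So I would write
\[
\|\lra{y}^{-3}\mathcal F\|_\infty\lesssim |\Gamma_0|\,\|\lra{y}^{-3}V_{ab}\|_\infty
+\sum_{j,k}|\Gamma_{jk}|\,\Big\|\lra{y}^{-3}\tfrac{(p-1)ay_jy_k}{p-1+yby}V_{ab}\Big\|_\infty
+\|\lra{y}^{-3}G_1 V_{ab}\|_\infty,
\]
and the first two norms are $O(1)$. For the last term, the gain to $\beta^{5/2}$ (rather than $\beta$) comes from splitting into the region $\{yby\lesssim 1\}$, where $V_{ab}=O(1)$ and $G_1=O(\|b\|\cdot (yby))=O(\|b\|^2\langle y\rangle^2)$ which the weight $\lra{y}^{-3}$ tames, against a factor $\|b\|^2\lesssim\beta^2$ — and one squeezes an extra $\beta^{1/2}$ from the relation between $B(\tau)$ and the off-diagonal size, or more simply from $\|b\|\lesssim\beta$ together with the weight absorbing one more power of $\langle y\rangle$. (I would check whether $\beta^{5/2}$ is really needed or whether $\beta^2$ would do and $\beta^{5/2}$ is just a convenient slack.) For the unweighted bound \eqref{eqn:Festimate}, the term $\sum\Gamma_{jk}\tfrac{(p-1)ay_jy_k}{p-1+yby}V_{ab}$ is only $O(1)$ pointwise but to make it small one uses $|\Gamma_{jk}|$; here the factor $\tfrac1\beta$ in front of $\sum|\Gamma_{jk}|$ appears because without the weight one cannot gain from decay, so one only extracts $\tfrac{y_jy_k}{p-1+yby}\lesssim\tfrac{1}{\|b\|}\lesssim\tfrac1\beta$ in the worst direction — this is the bookkeeping that produces the asymmetry between \eqref{eqn:Festimate0} and \eqref{eqn:Festimate}.

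Finally, the nonlinearity estimate \eqref{eqn:69a}: $\mathcal N(\xi,a,b)=|V_{ab}+\xi|^{p-1}(V_{ab}+\xi)-V_{ab}^p-pV_{ab}^{p-1}\xi$ is the second-order Taylor remainder of $s\mapsto|s|^{p-1}s$ at $s=V_{ab}$. By the standard convexity/mean-value estimate for $|s|^{p-1}s$ one gets $|\mathcal N|\lesssim V_{ab}^{p-2}\xi^2$ when $|\xi|\le V_{ab}$ (here $0<V_{ab}\lesssim 1$, so $V_{ab}^{p-2}\lesssim 1$ for $p\ge 2$, and for $1<p<2$ one uses the Hölder-type bound $|\,|a|^{p-1}a-|b|^{p-1}b\,|\lesssim(|a|+|b|)^{p-1}|a-b|$ directly) and $|\mathcal N|\lesssim|\xi|^p$ when $|\xi|\ge V_{ab}$, since then all three terms are $\lesssim|\xi|^p$. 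Combining the two regimes gives $|\mathcal N|\lesssim|\xi|^2+|\xi|^p$.

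The step I expect to be the genuine obstacle is pinning down the exact power $\beta^{5/2}$ in \eqref{eqn:Festimate0}: it requires being careful about how much decay the weight $\lra{y}^{-3}$ buys against the growth $y_jy_k$ inside $G_1$ and inside the $\Gamma_{jk}$-term, and whether the extra half power beyond $\beta^2$ uses the specific definition of $B(\tau)$ (the exponent $1+\kappa$) or just $\|b\|\lesssim\beta$. Everything else is routine pointwise estimation of the explicit formulas.
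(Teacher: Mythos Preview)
Your approach is exactly the paper's, and the $\mathcal N$ bound and the overall structure for $\mathcal F$ are correct. The only genuine gap is the one you flagged: the power $\beta^{5/2}$ for the weighted $G_1$ contribution. Your attempt only yields $\beta^2$ because you discard a factor of $(yby)$: from $yb^2y\le\|b\|\,(yby)$ the sharp bound is $|G_1|\lesssim\|b\|\big(\tfrac{yby}{p-1+yby}\big)^2$, not $|G_1|\lesssim\|b\|\cdot(yby)$. Now split on $|y|$ rather than on $yby$. For $|y|\le\|b\|^{-1/2}$ one has $yby\le\|b\|\,|y|^2\le 1$, hence $|G_1|\lesssim\|b\|(yby)^2\le\|b\|^3|y|^4$, and $\lra{y}^{-3}|y|^4\lesssim|y|\le\|b\|^{-1/2}$, giving $\lra{y}^{-3}|G_1|\lesssim\|b\|^{5/2}$. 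For $|y|\ge\|b\|^{-1/2}$ one simply uses $|G_1|\lesssim\|b\|$ and $\lra{y}^{-3}\lesssim\|b\|^{3/2}$, again $\|b\|^{5/2}$. No appeal to the definition of $B(\tau)$ is needed here; the estimate is purely pointwise in $\|b\|$, and only afterwards does one invoke $B\le\beta^{-\kappa/2}$ to replace $\|b\|$ by $\beta$.

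As for your question whether $\beta^2$ would suffice: it would not. The $G_1$ contribution is the only piece of $\mathcal F$ not multiplied by some $|\Gamma_0|$ or $|\Gamma_{jk}|$, so after inserting the bounds \eqref{GammaEst} on the $\Gamma$'s it becomes the ``$1$'' inside the bracket of \eqref{eq:Festimate}. In the $M_1$ estimate of Section~\ref{SEC:EstM1} one divides through by $\beta^2$; unless the $G_1$ term carries a power strictly greater than $2$, the resulting inequality for $M_1$ acquires an $O(1)$ forcing term with no small prefactor, and the bootstrap does not close. One minor slip: your preliminary claim $\tfrac{ay_jy_k}{p-1+yby}\lesssim 1$ is false in general (for off-diagonal $j,k$ the denominator controls $yby$, not $y_jy_k$); you correctly account for this with the factor $1/\beta$ in the unweighted estimate, and in the weighted estimate the factor $\lra{y}^{-3}$ absorbs $y_jy_k$ directly.
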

\begin{proof}
\DETAILS{Rearranging the leading term of expression for $\mathcal{F}$ so that
${y_i}^2$ appears in the combination $a y_i^2-1$ gives the more
convenient expression
\begin{equation}
{\cal
F}=\frac{1}{p-1}\lsb\Gamma_0+\sum_{i=1}^n\lb\Gamma_i+\Gamma_i(ay_i^2-1)
-\Gamma_{i}\frac{ay_i^2\sum_{j=1}^nb_jy_j^2}{p-1+\sum_{j=1}^nb_jy_j^2}\rb+G_{1}
\rsb v_{ab} \label{eqn:ForcingDecomposed}
\end{equation}}
We estimate $\|\lra{y}^{-3}{\cal F}\|_\infty$
using the expression of ${\cal F}$ and the estimates
\begin{align*}
\| V_{ab}\|_\infty,\ \|\lra{y}^{-3}y_jy_k
\|_\infty \ \lesssim 1.
\end{align*}
The result is
\begin{equation}
\|\lra{y}^{-3}{\cal F}\|_\infty\lesssim
|\Gamma_0| + \sum_{j,k}|\Gamma_{jk}|
+\|b\|^\frac{5}{2}.\label{eqn:singleast}
\end{equation}

Now we estimate $\|{\cal F}\|$. Recall the
expression of $\mathcal{F}$ in Equation \eqref{eqn:Forcing}. We use
the estimates
\begin{align*}
\left\|V_{ab}\right\|_\infty,\
\left\|\frac{b_{jk} y_j y_k}{(p-1 + y by)^2}V_{ab}\right\|_\infty&\lesssim 1
\end{align*}
to obtain that
\begin{equation}
\|{\cal
F}\|_\infty\lesssim|\Gamma_0|+\sum_{j,k}\frac{1}{\|b\|}|\Gamma_{jk}|+\|b\|.
\label{eqn:doubleast}
\end{equation}

To complete the proof we estimate $b$ in terms of $\beta$ and
$B$ of the first bound. The assumption that $B\leq
\beta^{-\frac{\kappa}{2}}$ implies that $\|b\|=
\beta+\O{\beta^{1+\frac{\kappa}{2}}}$, which together with estimates \eqref{eqn:singleast} and
\eqref{eqn:doubleast}, implies estimates \eqref{eqn:Festimate0} and
\eqref{eqn:Festimate}.

For \eqref{eqn:69a} we observe that if $V_{ab}\le 2|\xi|$ then
$|{\cal N}|\le (3^p+2^p+p2^{p-1})|\xi|^p$.
If $V_{ab}\ge 2|\xi|$, then we use the formula ${\cal
N}=p \int_0^1 \lsb(V_{ab}+s\xi)^{p-1}-V_{b c}^{p-1}\rsb\xi\, ds$ and consider the cases
$1<p\le 2$ and $p>2$ separately to obtain \eqref{eqn:69a}.
\end{proof}

Recall that
$\phi_{a}^{(ij)}=(\sqrt{a}y_i)^{1-\delta_{i0}}(\sqrt{a}y_j)^{1-\delta_{j0}},\, \,
i,j=0,\cdots,n$.
\begin{prop}\label{Prop:LyapunovSchmidtReduction}
Suppose that $A(\tau), {M_1}(\tau)\leq \beta^{-\frac{\kappa}{2}},$
$B(\tau)\leq \beta^{-\frac{\kappa}{2}}$ and
$1/2\leq c(0)\leq 2$ for $0\le\tau\le T$. Let $v=V_{ab}+\xi$ be a
solution to \eqref{eqn:BVNLH} with $\xi\bot \phi_{a}^{(ij)}$
in $L^2(\R^n,\e^{-a|y|^2/2}\d y)$. Over times $0\le\tau\le T$, the parameters $a$ and
$b$ satisfy the equations
\begin{align}
\p_\tau{b}&=-\frac{4p}{(p-1)^2}{b}^2-\frac{2b}{p-1}\Tr b
+(c-2a)b+{\cal R}_{b}(\xi,a,b),\label{eqn:bParameter}\\
\frac{\p_\tau a}{a+\frac{1}{2}}&=(\frac{1}{2}-a)-\frac{2}{p-1}\Tr b+{\cal
R}_a(\xi,a,b),\label{eqn:cParameter}
\end{align}
where the remainders ${\cal R}_{a}$ and ${\cal R}_{b}$ are of the
order $\O{\beta^3+\beta^3 {M_1}(1+
A)+\beta^{4}{M_1}^{2}+\beta^{2p}{M_1}^{p}}$ and satisfy ${\cal
R}_{b}(0,a,b), {\cal R}_{c}(0,a,b)=O(\beta^3).$
\end{prop}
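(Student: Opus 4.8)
The plan is to project the fluctuation equation \eqref{eqn:FluctuationGauged} onto the approximate tangent modes $\phi_{a}^{(ij)}$ and to read off the equations for $a_\tau$ and $b_\tau$ from the requirement that the "forcing coefficients" $\Gamma_0$ and $\Gamma_{jk}$, defined in \eqref{eqn:Gamma0}--\eqref{eqn:Gamma1}, remain of higher order. First I would differentiate the orthogonality relations $\langle \xi(\tau),\phi^{(ij)}_{a(\tau)}\rangle=0$ in $\tau$ (the inner product and the weight $e^{-a|y|^2/2}$ both depending on $a$). This gives, for each pair $(i,j)$,
\[
\langle \xi_\tau,\phi^{(ij)}_{a}\rangle \;=\; -\,a_\tau\,\big\langle \xi,\ \partial_a\!\big(\phi^{(ij)}_{a}\,e^{-a|y|^2/2}\big)\,e^{a|y|^2/2}\big\rangle ,
\]
where the right-hand side is linear in $a_\tau$ with a Gaussian-weighted polynomial coefficient, hence (after using $M_1$) contributes at order $\beta^2 M_1\cdot a_\tau$, which will later be absorbed. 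Substituting \eqref{eqn:FluctuationGauged} for $\xi_\tau$ yields
\[
\langle \mathcal{F}(a,b),\phi^{(ij)}_{a}\rangle \;=\; \langle \mathcal{L}_{ab}\xi,\phi^{(ij)}_{a}\rangle \;-\;\langle \mathcal{N}(\xi,a,b),\phi^{(ij)}_{a}\rangle \;+\;(\text{the }a_\tau\text{ term}).
\]

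Next I would insert the explicit form of $\mathcal{F}$ from \eqref{eqn:Forcing}: its leading part is the linear combination of the functions $V_{ab}$ and $\frac{a y_jy_k}{p-1+y b y}V_{ab}$ with coefficients $\frac{1}{p-1}\Gamma_0$ and $\Gamma_{jk}$, plus the genuinely lower-order piece $\frac{1}{p-1}G_1 V_{ab}$. Pairing against the $\phi^{(ij)}_{a}$ turns the displayed identity into a linear system $\mathbf{K}\,\big(\Gamma_0,(\Gamma_{jk})\big)^{T}=(\text{RHS})$, where $\mathbf{K}$ is, up to the harmless power of $\lambda$ and the explicit constants, precisely the matrix $A_1$ whose invertibility — with an $O(1)$ block for the $\Gamma_0$ row and an $O(\|b\|^{-1})$ block for the $\Gamma_{jk}$ rows — was established in Lemma \ref{partialG} via \eqref{K_11}, \eqref{K_22}, \eqref{K_33}, \eqref{K_ij}. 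Inverting, each $\Gamma$ is a bounded combination of $\langle G_1 V_{ab},\phi\rangle$, $\langle \mathcal{L}_{ab}\xi,\phi\rangle$, $\langle \mathcal{N},\phi\rangle$ and the $a_\tau$-remainder. Then, recalling $c=a+\tfrac12$ so that $c_\tau=a_\tau$, setting $\Gamma_0=(\text{remainder})$ gives \eqref{eqn:cParameter} and setting $\Gamma_{jk}=(\text{remainder})$ gives \eqref{eqn:bParameter}, once the $\Tr b$-terms and the quadratic-in-$b$ terms are moved to the left-hand side.

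It remains to bound the remainders $\mathcal{R}_a,\mathcal{R}_b$ at the asserted order $\O{\beta^3+\beta^3 M_1(1+A)+\beta^4 M_1^2+\beta^{2p}M_1^p}$. The term $\langle G_1 V_{ab},\phi\rangle$ is $\O{\|b\|^3}=\O{\beta^3}$ by \eqref{eqn:G_1} and $\|b\|=\beta+\O{\beta^{1+\kappa/2}}$ (from $B\le\beta^{-\kappa/2}$). The nonlinear term is controlled by \eqref{eqn:69a}: pairing $|\xi|^2+|\xi|^p$ against the Gaussian-weighted polynomials $\phi$ and converting to the $M_1$-norm (which carries the $\lra{y}^{-3}$ weight and the factor $\beta^2$) gives $\O{\beta^4 M_1^2+\beta^{2p}M_1^p}$. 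The linear term $\langle \mathcal{L}_{ab}\xi,\phi^{(ij)}_{a}\rangle$ is handled by moving $\mathcal{L}_{ab}$ onto $\phi$ through its adjoint in $L^2(e^{-a|y|^2/2}dy)$ and using the near-eigenfunction identities of \eqref{eqn:zeromodes}: $\mathcal{L}_{ab}^{*}\phi^{(ij)}_{a}=\mu_{ij}\,\phi^{(ij)}_{a}+\O{\|b\|}(\text{polynomial})$, so that $\langle \mathcal{L}_{ab}\xi,\phi^{(ij)}_{a}\rangle=\mu_{ij}\langle\xi,\phi^{(ij)}_{a}\rangle+\O{\|b\|\,\beta^2 M_1}=\O{\beta^3 M_1}$, the first term vanishing by the orthogonality $\xi\perp\phi^{(ij)}_{a}$. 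The $a_\tau$-remainder is absorbed by solving the resulting closed equation for $a_\tau$ (legitimate since its coefficient is $O(\beta^2 M_1)\ll1$), which also produces the $A(\tau)$-dependence in the $\beta^3 M_1(1+A)$ term. Finally, substituting $\xi\equiv 0$ annihilates every $M_1$-term and leaves only $\O{\beta^3}$, giving $\mathcal{R}_b(0,a,b),\mathcal{R}_a(0,a,b)=\O{\beta^3}$.

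The hard part will be the linear projection $\langle\mathcal{L}_{ab}\xi,\phi^{(ij)}_{a}\rangle$: since $\mathcal{L}_{ab}$ is not self-adjoint in $L^2(e^{-a|y|^2/2}dy)$ and the $\phi^{(ij)}_{a}$ are only \emph{approximate} eigenfunctions, the $b\ne0$ correction to $\mathcal{L}_{ab}^{*}\phi^{(ij)}_{a}$ must be tracked precisely enough — and combined with the exact vanishing of $\langle\xi,\phi^{(ij)}_{a}\rangle$ — that the product lands at order $\beta^3 M_1$ rather than the naive $\beta^2 M_1$. Keeping this power of $\beta$ honest, together with the $\|b\|^{-1}\sim\beta^{-1}$ factor in the $\Gamma_{jk}$-row of $\mathbf{K}^{-1}$ (which is exactly why the $b$-equation can tolerate an $\O{\beta^3}$ remainder), is the delicate bookkeeping, and it is where the choice $c=a+\tfrac12$ and the precise weights in the definition of $M_1$ are essential.
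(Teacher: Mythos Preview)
Your approach is essentially the same as the paper's: project \eqref{eqn:FluctuationGauged} onto the $\phi^{(ij)}_a$, differentiate the orthogonality conditions, and bound each piece. Two points are worth correcting.

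First, the matrix $\mathbf{K}$ is \emph{not} the matrix $A_1$ of Lemma~\ref{partialG}: that matrix has entries $\langle \partial_{\mu_\alpha}V_\mu,\varphi^{(\beta)}\rangle$, whereas here you need $\langle V_{ab},\phi^{(ij)}\rangle$ and $\langle \tfrac{ay_ky_l}{p-1+yby}V_{ab},\phi^{(ij)}\rangle$. In particular, there is no $\|b\|^{-1}$ block in $\mathbf{K}^{-1}$ (the $O(\|b\|)$ block $K_{33}$ in $A_1$ comes from the $z$-derivatives, and the odd modes $\phi^{(0i)}$ play no role in the $a,b$ equations). The paper simply computes the four Gaussian integrals directly and finds, up to $O(\|b\|)$, that the projections onto $\phi^{(00)}$, $\phi^{(ii)}$, $\phi^{(ij)}$ are proportional to $\Gamma_0+\sum_k\Gamma_{kk}$, $\Gamma_0+\sum_k\Gamma_{kk}+2\Gamma_{ii}$, $2\Gamma_{ij}$ respectively, so the system is solved by subtraction and the inverse is $O(1)$. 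Your worry in the last paragraph is therefore unfounded.

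Second, your ``hard part'' is easier than you fear. Rather than taking the adjoint of $\mathcal{L}_{ab}$ and treating the $\phi^{(ij)}$ as approximate eigenfunctions, the paper writes $\mathcal{L}_{ab}=\mathcal{L}_*-\tfrac{pc}{p-1+yby}$ with $\mathcal{L}_*:=-\Delta+ay\cdot\nabla+\tfrac{2a}{p-1}$; this operator \emph{is} self-adjoint in $L^2(e^{-a|y|^2/2}dy)$ and the $\phi^{(ij)}$ are \emph{exact} eigenfunctions (up to a multiple of $\phi^{(00)}$), so $\langle\mathcal{L}_*\xi,\phi^{(ij)}\rangle=\langle\xi,\mathcal{L}_*\phi^{(ij)}\rangle=0$ exactly. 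The potential term then splits as $\tfrac{pc}{p-1+yby}=\tfrac{pc}{p-1}-\tfrac{pc}{p-1}\cdot\tfrac{yby}{p-1+yby}$; the first piece again vanishes by orthogonality, and the second is $O(\|b\|)$ against $\lra{y}^{-3}\xi$, giving $O(\beta^3 M_1)$ with no delicate bookkeeping.
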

\begin{proof}We take the inner product of equation
(\ref{eqn:FluctuationGauged}) with $\phi_{a}^{(ij)}$ to get
\begin{align*}
\langle \xi_{\tau}, \phi_{a}^{(ij)}\rangle&=\langle -{\mathcal
L}_{ab}\xi+{\mathcal N}(\xi,a,b) +{\mathcal
F}(a,b),\phi_{a}^{(ij)}\rangle.
\end{align*}
We start with analyzing the $\mathcal{F}$
term. The inner product of $\mathcal{F}$ with
$\phi_{a}^{(ij)}$ gives the expressions
\begin{equation}\label{eqn:ProjectedEquation}
\begin{array}{ll}
(p-1)\ip{{\cal F}}{\phi_{a}^{(ij)}}&=\Gamma_0\ip{V_{ab}}{\phi_{a}^{(ij)}}+\langle G_{1}V_{ab},\phi_{a}^{(ij)} \rangle\\
&+\sum_{k,l}\left[\Gamma_{kl}\ip{V_{ab}}{a
y_k y_l\phi_{a}^{(ij)}}-\Gamma_{kl}\langle
\frac{ay_k y_l yby}{p-1+yby}V_{ab},\phi_{a}^{(ij)}\rangle\right].
\end{array}
\end{equation}
By rescaling the variable of integration so that the
exponential term does not contain the parameter $a$ and expanding $V_{ab}$ in $b$ we obtain the estimates
\begin{align*}
&\ip{V_{ab}}{\phi_{a}^{(ij)}}=\lb\frac{a+\frac{1}{2}}{p-1}\rb^{\frac{1}{p-1}}\lb\frac{2\pi}{a}\rb^{\frac{n}{2}}\delta_{ij}+\O{\|b\|},\\
&\ip{V_{ab}}{ay_ky_l\phi_{a}^{(ij)}}=\lb\delta_{kl}\delta_{ij}(1+2\delta_{ik})+
(1-\delta_{kl})(\delta_{ik}\delta_{jl}+\delta_{jk}\delta_{il})\rb
\lb\frac{2\pi}{a}\rb^{\frac{n}{2}}+\O{\|b\|},\\
&\ip{\frac{ay_ky_lyby}{p-1+yby}V_{ab}}{\phi_{a}^{(ij)}}=\O{\|b\|},\\
&\ip{G_1V_{ab}}{\phi_{a}^{(ij)}}=\O{\|b\|^3},
\end{align*}
Where we recall $G_1:=-\frac{4p(y by)(\sum_{i=1}^n (\sum_{j=1}^n b_{ij}y_j)^2)}{(p-1)^2(p-1+y by)^2}$. Substituting
these estimates into Equation \eqref{eqn:ProjectedEquation} gives
\begin{align}
(p-1)\ip{{\cal F}}{\phi_{a}^{(00)}}&=
\lb\frac{a+\frac{1}{2}}{p-1}\rb^{\frac{1}{p-1}}\lb\frac{2\pi}{a}\rb^{\frac{n}{2}}
(\Gamma_0+\sum_{k}\Gamma_{kk})+R_1,\label{est:ForcingPsi0}\\
(p-1)\ip{{\cal F}}{\phi_{a}^{(ii)}}&=
\lb\frac{a+\frac{1}{2}}{p-1}\rb^{\frac{1}{p-1}}\lb\frac{2\pi}{a}\rb^{\frac{n}{2}}
(\Gamma_0+\sum_{k}\Gamma_{kk}+2\Gamma_{ii})
+R_2 \ \text{for} \ 1\leq i\leq n,\label{est:ForcingPsi2}\\
(p-1)\ip{{\cal F}}{\phi_{a}^{(ij)}}&=
2\lb\frac{a+\frac{1}{2}}{p-1}\rb^{\frac{1}{p-1}}\lb\frac{2\pi}{a}\rb^{\frac{n}{2}}\Gamma_{ij}+R_3
\ \text{for}\ 1\leq i < j \leq n, \label{est:ForcingPsi3}
\end{align}
where the remainders $R_1$, $R_2$ and $R_3$ are bounded by
$\O{\|b\|(|\Gamma_0|+\sum_{i,j}|\Gamma_{ij}|)+\|b\|^3}$.

To estimate the remaining terms we differentiate the orthogonality condition $\scalar{\xi}{\phi_a^{ij}}=0$ to obtain
  \begin{align*}
    0=\scalar{\xi_\tau}{\phi_a^{ij}}+\scalar{\xi}{\partial_\tau \phi_a^{ij}}-\frac{a_\tau}{2}\scalar{\xi}{\phi_a^{ij}|y|^2}\,,
  \end{align*}
where the last term is due to the weight $\e^{-\frac{a}{2}|y|^2}$. Now compute
\begin{equation*}
\scalar{\xi}{\partial_\tau\phi_a^{ij}}=0\ \mbox{and}\
\left|\frac{a_\tau}{2}\scalar{\xi}{\phi_{a}^{(ij)}|y|^2}\right|\leq
\left|\frac{1}{2}a^{-1}a_\tau\ip{\lra{y}^{-3}\xi}{a^2\lra{y}^{3}y_iy_j|y|^2}\right|.
\end{equation*}
Estimating the right hand side of the second inequality by
H\"{o}lder's inequality and using the definition of $M_{1}(\tau)$
gives that over times $0\le \tau\le T$
\begin{equation*}
\left|\frac{a_\tau}{2}\scalar{\xi}{\phi_{a}^{(ij)}|y|^2}\right|=\O{|a_\tau| \beta^2 M_{1}}.
\end{equation*}
Next we estimate $a_\tau$. Since $c=\frac{1}{2}+a$, we have
$a_\tau=c_{\tau}$, and so we find from \eqref {eqn:Gamma0}
$$
c_\tau=\O{\Gamma_0+\beta^2A}
$$
for times $0\le \tau\le T$, and hence
\begin{equation}
\left|\frac{a_\tau}{2}\scalar{\xi}{\phi_{a}^{(ij)}|y|^2}\right|=\O{\beta^2 M_{1}(|\Gamma_0|+\beta^2A)}\,.\label{est:TimeDerxi}
\end{equation}
We now estimate the terms involving the linear operator ${\cal L}_{a
b c}$.  Write the operator ${\cal L}_{a b c}$ as
\begin{equation*}
{\cal L}_{ab}={\cal L}_*-\frac{pc}{p-1+yby},
\end{equation*}
where ${\cal L}_*:=-\Delta_y+ay\cdot\nabla_y+\frac{2a}{p-1}$ is self-adjoint on $L^2(\RR^n,\e^{-\frac{a}{2}|y^2|})$ and satisfies ${\cal L}_*\phi_{a}^{(00)}=\frac{2
a}{p-1}\phi_{a}^{(00)}$ and ${\cal L}_*\phi_{a}^{(ij)}=\frac{2a
p}{p-1}\phi_{a}^{(ij)}+2\delta_{ij}$ for $1\leq i, j\leq n$. Projecting ${\cal L}_{a b c}\xi$ onto the
eigenvectors $\phi_{a}^{(00)}$ and $\phi_{a}^{(ij)}$ of ${\cal L}_*$ gives
the equations
\begin{align*}
\ip{{\cal L}_{ab}\xi}{\phi_{a}^{(00)}}&=-\ip{\xi}{\frac{pc}{p-1+yby}
}=\frac{pc}{p-1}\ip{\xi}{\frac{yby}{p-1+yby}
},\\
\ip{{\cal L}_{ab}\xi}{\phi_{a}^{(ij)}}&=-\ip{\xi}{\frac{pca
y_iy_j}{p-1+yby}}=\frac{pc}{p-1}\ip{\xi}{\frac{a
y_iy_jyby}{p-1+yby}}.
\end{align*}
Estimating with H\"{o}lder's inequality gives the inequalities
\begin{align*}
|\ip{{\cal L}_{ab}\xi}{\phi_{a}^{(00)}}|&\lesssim \|b\|\|\lra{y}^{-3}\xi\|_\infty\\
|\ip{{\cal L}_{ab}\xi}{\phi_{a}^{(ij)}}|&\lesssim
\|b\|\|\lra{y}^{-3}\xi\|_\infty.
\end{align*}
In terms of the estimating functions $\beta$ and $M_{1}$, these
estimates, after using the above estimate of $a_\tau$ and
simplifying in $a$ and $c$, become
\begin{align}
\ip{{\cal L}_{ab}\xi}{\phi_{a}^{(00)}}&\lesssim \beta^{3}M_{1}\label{Est:LinearOpPsi0}\\
\ip{{\cal L}_{ab}\xi}{\phi_{a}^{(ij)}}&\lesssim \beta^3M_{1}.\label{Est:LinearOpPsi2}
\end{align}

Lastly, we estimate the inner products involving the nonlinearity.
Because of \eqref{eqn:69a}, both $\ip{{\cal N}}{\phi_{a}^{(00)}}$ and
$\ip{{\cal N}}{\phi_{a}^{(ij)}}$ are estimated by $\O{\|\lra{y}^{-3}
\xi\|_\infty^2+\|\lra{y}^{-3}
\xi\|_\infty^p}.$  Writing this in terms of
$\beta$ and $M_{1}$ and simplifying gives the estimate
\begin{equation}
|\ip{{\cal N}}{\phi_{a}^{(00)}}|,\, \, |\ip{{\cal
N}}{\phi_{a}^{(ij)}}|\lesssim \beta^4 M_{1}^2+\beta^{2p}
M_{1}^p.\label{est:NonPsi0Psi2}
\end{equation}

Estimates \eqref{est:ForcingPsi0}-\eqref{est:NonPsi0Psi2} imply that $\Gamma_0$ and $\Gamma_{ij}$ are of the order
\begin{equation*}
\O{\beta(|\Gamma_0|+\sum_{i,j}|\Gamma_{ij}|)+\beta^3+\beta^2M_{1}\lb
\beta+|\Gamma_0|+\beta^2A \rb+\beta^4M_{1}^2+\beta^{2p} M_{1}^{p}}.
\end{equation*}
By the facts that $\beta(\tau)\le \beta_{0}\ll 1$ and $A, M_{1}\leq
\beta^{-\frac{\kappa}{2}} $, we obtain the estimates
\begin{equation}
|\Gamma_0|+\sum_{i,j}|\Gamma_{ij}|\lesssim \beta^3+\beta^3M_{1}(1+A)+\beta^4
M_{1}^2+\beta^{2p}M_{1}^{p} \label{GammaEst}
\end{equation}
for the times $0\le \tau\le T$.
\end{proof}

Equations \eqref{eqn:Festimate0}, \eqref{eqn:Festimate} and \eqref{GammaEst} yield the
following corollary.
\begin{cor}
Let $k_{0}:=\min\{1,2p-1\}$ and $k_{3}:=\min\{5/2,2p\}.$
Then for $m=0$ and $3$
\begin{equation}\label{eq:Festimate}
\begin{array}{lll}
\|\lra{y}^{-m}{\cal F}\|_\infty&\lesssim
&\beta^{k_{m}}(\tau)[1+ M_1(1+ A)+ M_1^2+ M_1^{p}].
\end{array}
\end{equation}
\end{cor}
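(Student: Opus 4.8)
The plan is to obtain \eqref{eq:Festimate} by feeding the estimate \eqref{GammaEst} on $|\Gamma_0|+\sum_{i,j}|\Gamma_{ij}|$, established in the proof of Proposition~\ref{Prop:LyapunovSchmidtReduction}, into the two forcing bounds \eqref{eqn:Festimate0} and \eqref{eqn:Festimate} of Proposition~\ref{Prop:ForcingBounds}, and then to collect powers of $\beta$, using throughout the standing smallness $\beta(\tau)\le\beta(0)\ll1$ (and the hypotheses $A,M_1,B\le\beta^{-\kappa/2}$, $1/2\le c(0)\le2$ inherited from those two propositions).

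For $m=3$ I would substitute \eqref{GammaEst} directly into \eqref{eqn:Festimate0} to get
\[
\|\lra{y}^{-3}{\cal F}\|_\infty\lesssim \beta^{5/2}+\beta^3+\beta^3M_1(1+A)+\beta^4M_1^2+\beta^{2p}M_1^p .
\]
Since $\beta\ll1$ and $k_3=\min\{5/2,2p\}\le 5/2\le 3\le 4$, each of the prefactors $\beta^{5/2},\beta^3,\beta^4$ is $\le\beta^{k_3}$, and $\beta^{2p}\le\beta^{\min\{5/2,2p\}}=\beta^{k_3}$ as well; factoring out $\beta^{k_3}$ gives the bound claimed in \eqref{eq:Festimate}.

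For $m=0$ I would first note that $\beta<1$ gives $|\Gamma_0|+\beta^{-1}\sum_{j,k}|\Gamma_{jk}|\le\beta^{-1}\big(|\Gamma_0|+\sum_{j,k}|\Gamma_{jk}|\big)$, so \eqref{eqn:Festimate} together with \eqref{GammaEst} yields
\[
\|{\cal F}\|_\infty\lesssim \beta+\beta^2+\beta^2M_1(1+A)+\beta^3M_1^2+\beta^{2p-1}M_1^p .
\]
Again using $\beta\ll1$, and $k_0=\min\{1,2p-1\}\le1\le2\le3$ together with $\beta^{2p-1}\le\beta^{\min\{1,2p-1\}}=\beta^{k_0}$, I factor out $\beta^{k_0}$ to conclude.

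There is no real obstacle here beyond careful bookkeeping of exponents: the content is entirely in Propositions~\ref{Prop:ForcingBounds} and~\ref{Prop:LyapunovSchmidtReduction}, and the corollary only repackages \eqref{eqn:Festimate0}, \eqref{eqn:Festimate} and \eqref{GammaEst}. The single point worth double-checking is that every $\beta$-power that appears is at least $k_m$, so that (since $\beta<1$) it can be absorbed into $\beta^{k_m}$; the nontrivial cases are the nonlinear terms $\beta^{2p}M_1^p$ for $m=3$ and $\beta^{2p-1}M_1^p$ for $m=0$, handled by $2p\ge k_3$ and $2p-1\ge k_0$ straight from the definitions of $k_3$ and $k_0$.
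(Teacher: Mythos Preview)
Your proposal is correct and follows exactly the approach the paper takes: the corollary is stated immediately after Proposition~\ref{Prop:LyapunovSchmidtReduction} with the one-line justification that it follows from \eqref{eqn:Festimate0}, \eqref{eqn:Festimate} and \eqref{GammaEst}. Your write-up simply makes the exponent bookkeeping explicit, which is fine.
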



\section{Proof of Estimates \eqref{EstB}-\eqref{EstA}}\label{SEC:EstB}
Recall that $a=c-\frac{1}{2}$. Assume $B(\tau)\le
\beta^{-\frac{\kappa}{2}}(\tau)$ for
$\tau\in[0,T]$ which implies that $\tilde{\beta} \lesssim b\lesssim \tilde{\beta}.$
We rewrite equation
\eqref{eqn:bParameter} as $\p_\tau b=-\frac{4p}{(p-1)^{2}}b^2+b\lb
\frac{1}{2}-a-\frac{2\Tr b}{p-1} \rb+{\cal R}_{b}$. By
the definition of $A$, the second term on the right hand side is
bounded by $\|b\|\beta^{2}A\lesssim \beta^{3}A$. Thus, using the
bound for ${\cal R}_{b}$ given in Proposition
\ref{Prop:LyapunovSchmidtReduction}, we obtain \eqref{EstB}.

To prove \eqref{MajorE} we use the inequality $\beta I \lesssim b$
to obtain the estimate
\begin{equation}
\left\| -\p_\tau b^{-1}+\frac{4p}{(p-1)^{2}}I \right\|\lesssim
\beta+\beta M_{1}(1+A)+\beta^{2}M_{1}^{2}+\beta^{2p-2}M_{1}^{p}.
\label{est:InversebDE}
\end{equation}
Since $\tilde{\beta}$ is a solution to $-\p_\tau
\tilde{\beta}^{-1}+4p(p-1)^{-2}I=0$, Equation \eqref{est:InversebDE}
implies that
\begin{equation*}
\left\|\p_\tau\lb b^{-1}-\tilde{\beta}^{-1}\rb\right\|\lesssim
\beta+\beta M_{1}(1+A)+\beta^{2}M_{1}^{2}+\beta^{2p-2}M_{1}^{p}.
\end{equation*}
Integrating this equation over $[0,\tau]$, multiplying the result by
$\beta^{-1-\kappa}$ and using that $\tilde{\beta}(0)=b(0)$, gives the estimate
\begin{equation*}
\beta^{-1-\kappa}\|\tilde{\beta}-b\|\lesssim \beta^{1-\kappa}
\int_0^\tau\lb \beta+\beta M_{1}(1+
A)+\beta^{2}M_{1}^{2}+\beta^{2p-2}M_{1}^{p}\, \rb ds,
\end{equation*}
where, recall, $\kappa:=\min\{\frac{1}{2}, \frac{p-1}{2}\}<1$.
Hence, by the definition of $\beta$ and $B$ and the facts that
$M_{1}$ and $A$ are increasing functions, \eqref{MajorE} follows.

Define the quantity
$\Gamma:=\frac{1}{2}-a-\frac{2}{p-1}\Tr b$.
 Differentiating $\Gamma$ with respect to $\tau$ and substituting for
$\p_\tau b$ and $a_{\tau}=c_\tau$. From equations
\eqref{eqn:bParameter} and \eqref{eqn:cParameter}, we obtain
\begin{equation*}
\p_\tau\Gamma=-c(\Gamma+{\cal R}_c)-\frac{2}{p-1}\Tr\lb
-\frac{4p}{(p-1)^2}b^2-\frac{2b}{p-1}\Tr b+(c-2a)b+{\cal
R}_{b} \rb.
\end{equation*}
Replacing $b(c-2a)$ by $b\Gamma+\frac{2b}{p-1}\Tr b$ and rearranging the resulting equation gives that
\begin{equation*}
\p_\tau \Gamma+\lsb
a+\frac{1}{2}+\frac{2}{p-1}\Tr b\rsb\Gamma=\frac{8
p}{(p-1)^3}\Tr b^2-(a+\frac{1}{2}){\cal
R}_c-\frac{2}{p-1}{\cal R}_{b}.
\end{equation*}
Let $\mu=\exp\lb \int_{0}^{\tau}\lb
a+\frac{1}{2}+\frac{2}{p-1}\Tr b\rb ds\rb$. We now
integrate the above equation over $[0,\tau]\subseteq [0,T]$. Then
the above equation implies that
\begin{equation*}
\mu(\tau)\Gamma(\tau)-\mu(0)\Gamma(0)=\int_0^\tau\p_\tau(\mu\Gamma)
=\frac{8p}{(p-1)^3}\int_0^\tau \mu \Tr b^2\, ds- \int_0^\tau
(a+\frac{1}{2})\mu{\cal R}_c\, ds- \int_0^\tau
\frac{2}{p-1}\mu{\cal R}_{b}\, ds.
\end{equation*}
Use the inequality $\|b\|\lesssim \beta$ and the estimates of ${\cal
R}_{b_i}$ and ${\cal R}_c$ in Proposition
\ref{Prop:LyapunovSchmidtReduction} to obtain
\begin{equation*}
|\Gamma|\lesssim \mu^{-1}\Gamma(0)+\mu^{-1} \int_0^\tau \mu
\beta^2\, ds+\mu^{-1} \int_0^\tau \mu\lb \beta^3+\beta^3
M_{1}(1+A)+\beta^{4}M_{1}^{2}+\beta^{2p}M_{1}^{p} \rb\, ds.
\end{equation*}
For our purpose, it is sufficient to use the less sharp inequality
\begin{equation*}
|\Gamma|\lesssim\mu^{-1}\Gamma(0)+\lb
1+\beta(0)M_{1}(1+A)+\beta^2(0)M_{1}^{2}+\beta^{2p-2}(0)M_{1}^{p}
\rb \mu^{-1} \int_0^\tau \mu \beta^2\, ds.
\end{equation*}
The assumption that $A(\tau),
B(\tau)\leq\beta^{-\frac{\kappa}{2}}(\tau),$ implies that
$a+\frac{1}{2}+\frac{2}{p-1}\Tr b=1+\O{\beta^2A}\geq
\frac{1}{2}$ and therefore $\beta^{-2}\mu^{-1}\lesssim
\beta^{-2}(0)$ and $ \int_0^\tau \mu(s) \beta^2(s)\, ds\lesssim
\mu(\tau)\beta^2(\tau)$. The last two inequalities and the relation
$\displaystyle\max_{s\leq \tau}\beta^{-2}(s)|\Gamma(s)|=A(\tau)$
lead to \eqref{EstA}.

\section{Rescaling of Fluctuations on a Fixed Time Interval}
\label{Section:Rescaling} The coefficient in front of $|y|^2$ in the
operator $\mathcal{L}_{ab}$, \eqref{eqn:LinOp}, is time dependent,
complicating the estimation of the semigroup generated by this
operator. In this section we introduce new time and space
variables in such a way that the coefficient of $|y|^2$ in the new
operator is constant (cf \cite{BP1, BuSu, DGSW, Per}).

Let $T$ be given and let $t(\tau)$ be the inverse of the function
$\tau(t):= \int_0^t\lambda^2(s)\, ds$.
 We approximate the scaling parameter $\lambda(t)$ over the time
interval $[0,t(T)]$ by a new parameter $\lambda_1(t)$.  We choose
$\lambda_1(t)$ to satisfy for $t\leq t(T)$
\begin{equation*}
\p_t\lb \lambda_1^{-3}\p_t\lambda_1 \rb=0\ \mbox{with} \
\lambda_1(t(T))=\lambda(t(T))\ \mbox{and}\
\p_t\lambda_1(t(T))=\p_t\lambda(t(T)).
\end{equation*}
We define $\alpha:=\lambda_1^{-3}\p_t\lambda_1=a(T)$.  This is an
analog of the parameter $a$ and it is constant.  The last two
conditions imply that $\lambda_1$ is tangent to $\lambda$ at
$t=t(T)$.  Define the new time and space variables as
\begin{equation*}
z=\frac{\lambda_1}{\lambda} y\ \mbox{and}\ \sigma=\sigma(t(\tau))\
\text{with}\ \sigma(t):= \int_0^t \lambda_1^2(s)\, ds
\end{equation*}
where $\tau\leq T$, $\sigma\leq S:=\sigma(T)$ and $\lambda$, $\lambda_{1}$ are functions of $t(\tau).$ Now we introduce the new
function $\eta(z,\sigma)$ by the equality
\begin{equation}\label{NewFun}
\lambda_1^\frac{2}{p-1} \eta(z,\sigma)=\lambda^\frac{2}{p-1}\xi(y,\tau).
\end{equation}

Denote by $t(\sigma)$ the inverse of the function $\sigma(t)$. In
the equation for $\eta(z,\sigma)$ derived below and in what follows
the symbols $\lambda$, $a$ and $b$ stand for $\lambda(t(\sigma)),$
$a(\tau(t(\sigma)))$ and $b(\tau(t(\sigma)))$, respectively.
 Substituting this change of variables into
\eqref{eqn:FluctuationGauged} gives the governing equation for
$\eta$:
\begin{equation}\label{eq:eta}
\p_\sigma\eta=-L_{\alpha}\eta+W(a,b,\alpha)\eta+F(a,b,\alpha)+N(\eta,a,
b,\alpha),
\end{equation}
where
\begin{align}
&L_{\alpha}:=L_0+V,\
L_0:=-\Delta_z+\alpha z\cdot\nabla_z-2\alpha,\
V:=\frac{2 p\alpha}{p-1}-\frac{2p\alpha}{p-1+z\tilde{\beta} z}, \label{eqn:DefnL0V}\\
&W(a,b,\alpha):=\frac{\lambda^2}{\lambda_1^2}\frac{ p
(a+\frac{1}{2})}{p-1+\frac{\lambda^2}{\lambda_1^2}z bz}
-\frac{2 p\alpha}{p-1+z\tilde{\beta} z},\nonumber\\
&F(a,b,\alpha):=\lb\frac{\lambda}{\lambda_1}\rb^\frac{2p}{p-1}
{\cal
F}(a,b,c)\nonumber
\end{align}
and
\begin{align*}
&N(\eta,a,b,\alpha):=\lb\frac{\lambda}{\lambda_1}\rb^\frac{2p}{p-1}
{\cal N}\lb
\lb\frac{\lambda_1}{\lambda}\rb^\frac{2}{p-1} \eta,b,c\rb,
\end{align*}
where, recall, $c$ and $a$ are related as $c=a+\frac{1}{2}$ and
$\beta$ is defined in \eqref{FunBTau}.

In the next statement we prove that the new parameter
$\lambda_{1}(t)$ is a good approximation of the old one,
$\lambda(t)$. The proof is an exact copy of the one in \cite{DGSW}. We reproduce it for completeness. We have
\begin{prop}\label{NewTrajectory}
If $A(\tau)\leq \beta^{-\frac{\kappa}{2}}(\tau)$ and
$\beta(0) \ll 1$, then
\begin{equation}\label{eq:appro}
|\frac{\lambda}{\lambda_{1}}(t(\tau))-1|\lesssim \beta(\tau)\leq
\beta(0).
\end{equation}
\end{prop}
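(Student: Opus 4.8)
The plan is to reduce the statement to a scalar linear ODE for the ratio $\psi(\tau):=\big(\lambda/\lambda_1\big)^2(t(\tau))$ and to solve that ODE explicitly.

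First I would record the structure of $\lambda_1$. Since $\alpha=\lambda_1^{-3}\partial_t\lambda_1=a(T)\in[1/4,1]$ is constant, the function $\lambda_1^{-2}$ is affine in $t$: integrating $\partial_t(\lambda_1^{-2})=-2\alpha$ from $t(T)$ backward, with the terminal data $\lambda_1(t(T))=\lambda(t(T))$, gives $\lambda_1^{-2}(t)=\lambda^{-2}(t(T))+2\alpha\,(t(T)-t)\ge\lambda^{-2}(t(T))>0$ on $[0,t(T)]$, so $\lambda_1$ is well defined and positive there. It then suffices to prove $|\psi(\tau)-1|\lesssim\beta(\tau)$, since $\big|\tfrac{\lambda}{\lambda_1}-1\big|=|\sqrt\psi-1|=\dfrac{|\psi-1|}{1+\sqrt\psi}\le|\psi-1|$, and $\beta(\tau)\le\beta(0)$ because $\beta$ is non-increasing.

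Next, derive the ODE. Using $dt/d\tau=\lambda^{-2}$ together with $a=\lambda^{-3}\partial_t\lambda$ and $\alpha=\lambda_1^{-3}\partial_t\lambda_1$, a one-line computation gives $\frac{d}{d\tau}\log\lambda=a$ and $\frac{d}{d\tau}\log\lambda_1=\alpha\lambda_1^{2}\lambda^{-2}=a(T)/\psi$, hence
$$\psi'(\tau)=2a(\tau)\,\psi(\tau)-2a(T),\qquad \psi(T)=1.$$
Solving this linear equation backward from $\tau=T$ with integrating factor $e^{-2\int_0^\tau a}$, and then subtracting the constant using $E(T)-1=-2\int_\tau^T a(s)E(s)\,ds$ with $E(s):=e^{-2\int_\tau^s a(\sigma)\,d\sigma}$, one obtains the identity
$$\psi(\tau)-1=2\int_\tau^{T}\big(a(T)-a(s)\big)\,e^{-2\int_\tau^s a(\sigma)\,d\sigma}\,ds.$$

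Finally, estimate the two factors. On $[\tau,s]$ one has $a\ge 1/4$, so the exponential weight is $\le e^{-(s-\tau)/2}$. For the first factor I use the a priori control $\|b(\sigma)\|\lesssim\beta(\sigma)$ (available in the regime considered, e.g. whenever $B(\sigma)\le\beta^{-\kappa/2}(\sigma)$, since then $\tilde\beta\lesssim b\lesssim\tilde\beta$) together with the hypothesis $A(\sigma)\le\beta^{-\kappa/2}(\sigma)$; from the definition of $A$,
$$\Big|a(\sigma)-\tfrac12\Big|\le\Big|a(\sigma)-\tfrac12+\tfrac{2\Tr b(\sigma)}{p-1}\Big|+\tfrac{2}{p-1}\|b(\sigma)\|\lesssim\beta^{2}(\sigma)A(\sigma)+\beta(\sigma)\lesssim\beta(\sigma),$$
using $2-\kappa/2\ge1$; hence $|a(T)-a(s)|\le|a(T)-\tfrac12|+|a(s)-\tfrac12|\lesssim\beta(s)\le\beta(\tau)$ since $s\le T$ and $\beta$ is non-increasing. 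Plugging in,
$$|\psi(\tau)-1|\lesssim\beta(\tau)\int_{\tau}^{\infty}e^{-(s-\tau)/2}\,ds\lesssim\beta(\tau),$$
which yields $\big|\tfrac{\lambda}{\lambda_1}(t(\tau))-1\big|\lesssim\beta(\tau)\le\beta(0)$.

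The computation is essentially routine once the linear ODE for $\psi$ is identified; the only genuine input is the bound $|a(\sigma)-1/2|\lesssim\beta(\sigma)$, i.e. the decay of $\Tr b$. The point to watch is that the $s$-integral must be absorbed by the exponential factor rather than by pulling $\beta(s)$ out and integrating over $[\tau,T]$ (whose length is unbounded as $\tau\to$ blowup); the split into $e^{-(s-\tau)/2}$ times a factor $\lesssim\beta(\tau)$ is exactly what makes the constant in $|\psi(\tau)-1|\lesssim\beta(\tau)$ uniform.
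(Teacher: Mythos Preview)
Your proof is correct and in fact slightly cleaner than the paper's. The paper works directly with $\lambda/\lambda_1-1$, which satisfies
\[
\frac{d}{d\tau}\Big(\frac{\lambda}{\lambda_1}-1\Big)=2a\Big(\frac{\lambda}{\lambda_1}-1\Big)+\Gamma,\qquad \Gamma=a-\alpha-a\,\frac{\lambda_1}{\lambda}\Big(\frac{\lambda}{\lambda_1}-1\Big)^{\!2}+(a-\alpha)\Big(\frac{\lambda_1}{\lambda}-1\Big),
\]
a \emph{nonlinear} ODE whose integral representation then has to be closed by a continuity/bootstrap argument to absorb the quadratic term in $\Gamma$. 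By passing to $\psi=(\lambda/\lambda_1)^2$ you land on the genuinely \emph{linear} equation $\psi'=2a\psi-2a(T)$, whose explicit solution yields the closed formula $\psi(\tau)-1=2\int_\tau^T(a(T)-a(s))\,e^{-2\int_\tau^s a}\,ds$ with no bootstrap needed. Both arguments use the same key input $|a(\sigma)-\tfrac12|\lesssim\beta(\sigma)$; you are right to flag that this requires, in addition to $A\le\beta^{-\kappa/2}$, the control $\|b\|\lesssim\beta$ (equivalently a bound on $B$), which the paper invokes implicitly via \eqref{CauchA}. Your remark about absorbing the $s$-integral through the exponential weight rather than the length of $[\tau,T]$ is exactly the point in both proofs.
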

\begin{proof}
Differentiating $\frac{\lambda}{\lambda_1}-1$ with respect to $\tau$
(recall that $\frac{dt}{d\tau}=\frac{1}{\lambda^2}$) gives the
expression
\begin{equation*}
\frac{d}{d\tau}\lb\frac{\lambda}{\lambda_1}-1\rb=\frac{\lambda}{\lambda_1}a-\frac{\lambda_1}{\lambda}\alpha
\end{equation*} or, after some manipulations,
\begin{equation}\label{EstLambda}
\frac{d}{d\tau}\(\frac{\lambda}{\lambda_{1}}-1\)
=2a(\frac{\lambda}{\lambda_{1}}-1)+\Gamma
\end{equation} with
$$\Gamma:=a-\alpha-a\frac{\lambda_{1}}{\lambda}(\frac{\lambda}{\lambda_{1}}-1)^{2}
+(a-\alpha)(\frac{\lambda_{1}}{\lambda}-1).$$
Observe that $\frac{\lambda}{\lambda_{1}}(t(\tau))-1=0$ when
$\tau=T.$ Thus Equation \eqref{EstLambda} can be rewritten as
\begin{equation}
\frac{\lambda}{\lambda_{1}}(t(\tau))-1=- \int_{\tau}^{T}e^{-
\int^{\sigma}_{\tau}2a(\rho)d\rho}\Gamma(\sigma)d\sigma.
\label{eqn:89}
\end{equation}
By the definition of $A(\tau)$ and the definition $\alpha=a(T)$ we
have that, if $A(\tau)\leq \beta^{-\frac{\kappa}{2}}(\tau)$, then
\begin{equation}\label{CauchA}
|a(\tau)-\alpha|,\ |a(\tau)-\frac{1}{2}|\leq 2\beta(\tau)
\end{equation} on the time interval $\tau\in [0,T]$. Thus
\begin{equation}\label{Rem}
|\Gamma|\lesssim
\beta+(1+\frac{\lambda_{1}}{\lambda})(\frac{\lambda}{\lambda_{1}}-1)^{2}+\beta|\frac{\lambda}{\lambda_{1}}-1|.
\end{equation}
which together with \eqref{eqn:89} and \eqref{CauchA} implies
\eqref{eq:appro}.
\end{proof}
\section{Estimate of the Propagators}\label{Section:PropEst}
Let 
$P^{\alpha}$ be the orthogonal projection onto the orthogonal
complement of the space spanned by the eigenvectors of $L_{0}$
corresponding to the smallest three eigenvalues. Denote by
$V_{\alpha}(\tau,\sigma)$ the propagator generated by the
operator $-P^{\alpha}L_{\alpha}P^{\alpha}$ on ${\rm Ran}\,
P^\alpha$, where, recall, the operator $L_{\alpha}$ is defined in
\eqref{eqn:DefnL0V}. The main result of this section is the
following theorem.
\begin{thm}\label{ProP}
For any function $g\in \Ran P^{\alpha}$ and for $c_0:=\alpha -
\epsilon$ with some $\epsilon>0$ small we have
$$\|\langle z\rangle^{-3}V_{\alpha}(\tau,\sigma)g\|_{\infty}\lesssim
e^{-\cO(\tau-\sigma)}\|\langle z\rangle^{-3}g\|_{\infty}.$$
\end{thm}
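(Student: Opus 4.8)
The plan is to prove the weighted decay estimate for the propagator $V_\alpha(\tau,\sigma)$ by combining the known spectral picture of $L_0$ (a harmonic-oscillator-type operator) with a perturbative treatment of the potential $V$ and a weighted (semigroup) argument. First I would recall that $L_0 = -\Delta_z + \alpha z\cdot\nabla_z - 2\alpha$ is self-adjoint on $L^2(\R^n, \e^{-\alpha|z|^2/2}\d z)$ with discrete spectrum $\{\alpha k - 2\alpha : k = 0,1,2,\dots\}$ and Hermite-type eigenfunctions; the three smallest eigenvalues $-2\alpha, -\alpha, 0$ correspond exactly to the modes $1, z_i, z_iz_j$ that were projected out in the Lyapunov--Schmidt reduction. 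Hence on $\Ran P^\alpha$ the operator $P^\alpha L_0 P^\alpha$ has spectrum bounded below by $\alpha$, so the $L^2_{\e^{-\alpha|z|^2/2}}$-semigroup $\e^{-\tau P^\alpha L_0 P^\alpha}$ decays like $\e^{-\alpha\tau}$. Since $V = \frac{2p\alpha}{p-1} - \frac{2p\alpha}{p-1+z\tilde\beta z} \ge 0$ is bounded and nonnegative, adding $V$ only helps: $P^\alpha L_\alpha P^\alpha \ge P^\alpha L_0 P^\alpha + P^\alpha V P^\alpha$ has the same lower bound $\ge \alpha - O(\beta)$ up to the small perturbation coming from the fact that $V$ does not commute with $P^\alpha$ (the off-diagonal pieces $P^\alpha V (1-P^\alpha)$ are $O(\|\tilde\beta\|) = O(\beta)$, which is where the $\epsilon$ loss, $\cO = \alpha - \epsilon$, is absorbed). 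This gives the $L^2$-version of the estimate with rate $\cO$.

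The second, and main, step is upgrading the $L^2$-decay to the weighted $L^\infty$-decay with the weight $\langle z\rangle^{-3}$. Here I would use the standard strategy of writing $V_\alpha(\tau,\sigma)$ via Duhamel against the free propagator $\e^{-(\tau-\sigma)L_0}$, whose kernel is an explicit Mehler-type Gaussian kernel (a rescaled Ornstein--Uhlenbeck kernel). The free kernel maps $\langle z\rangle^3 L^\infty$ to itself with an $O(1)$ norm and, crucially, after projecting off the low modes, gains the factor $\e^{-\cO(\tau-\sigma)}$; the weight $\langle z\rangle^{-3}$ is compatible with the Gaussian because polynomial weights are essentially invariant under the Mehler flow (one checks $\e^{-(\tau-\sigma)L_0}\langle z\rangle^{3} \lesssim \langle z\rangle^{3}$ using that the flow contracts $z \mapsto \e^{-\alpha(\tau-\sigma)}z$ plus a Gaussian spread). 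The potential term $V$ is bounded and decays like $\langle z\rangle^{-2}$, so the Duhamel iteration
\begin{equation*}
V_\alpha(\tau,\sigma) = P^\alpha\e^{-(\tau-\sigma)L_0}P^\alpha - \int_\sigma^\tau P^\alpha\e^{-(\tau-s)L_0}P^\alpha\,(\text{commutator and }V)\,V_\alpha(s,\sigma)\,\d s
\end{equation*}
converges in the norm $\|\langle z\rangle^{-3}\,\cdot\,\|_\infty$ by a Gronwall/bootstrap argument, preserving the exponential rate (possibly after shrinking $\epsilon$ slightly). The interchange of $P^\alpha$ with the weight costs only finitely many explicit integrals against the three low eigenfunctions, which are polynomially bounded and paired against the rapidly decaying Gaussian weight, hence harmless.

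The main obstacle I anticipate is precisely the interaction between the spectral projection $P^\alpha$ and the multiplicative weight $\langle z\rangle^{-3}$: $P^\alpha$ is an integral operator in the Gaussian measure, so $\langle z\rangle^{-3}P^\alpha g$ is not simply $P^\alpha(\langle z\rangle^{-3}g)$, and one must show the difference is controlled in $L^\infty$ without destroying the decay rate. I would handle this by splitting $g = P^\alpha g$ explicitly: writing $P^\alpha = I - \sum_{k\le 2} \Pi_k$ with $\Pi_k$ the rank-(few) projections onto the low Hermite modes, the terms $\langle z\rangle^{-3}\Pi_k \e^{-(\tau-\sigma)L_0}\Pi_k'$ are finite-rank with explicit polynomially-bounded-times-Gaussian kernels and are trivially bounded on $\langle z\rangle^3 L^\infty$, while $\langle z\rangle^{-3}\e^{-(\tau-\sigma)L_0}\langle z\rangle^{3}$ is estimated directly from the Mehler kernel. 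A secondary technical point is that $\alpha = a(T)$ and $\tilde\beta = \tilde\beta(\tau)$ carry weak time-dependence, but since $|\alpha - 1/2| = O(\beta(0))$ and $\|\tilde\beta\| = O(\beta)$ are uniformly small, all constants are uniform and the rate $\cO = \alpha - \epsilon$ is uniform over the relevant range, so this does not affect the argument.
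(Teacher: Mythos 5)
There is a genuine gap, and it sits exactly where the paper itself flags the subtlety. Your plan upgrades the $L^2$ spectral bound to the weighted $L^\infty$ estimate by a Duhamel iteration around $e^{-rL_0}$ with $V$ as the perturbation. But the potential $V=\frac{2p\alpha}{p-1}-\frac{2p\alpha}{p-1+z\tilde\beta z}$ does \emph{not} decay like $\langle z\rangle^{-2}$: it is nonnegative, vanishes at $z=0$, and tends to the constant $\frac{2p\alpha}{p-1}$ as $|z|\to\infty$, so $\|V\|_\infty=\frac{2p\alpha}{p-1}$, which is $O(1)$ (and $>2\alpha$). Feeding $\|V\|_\infty$ into the Gronwall step of your Duhamel bootstrap, using the projected free decay rate $e^{-\alpha(\tau-s)}$, produces a rate $\alpha-C\|V\|_\infty<0$: the iteration gives exponential \emph{growth}, not decay, and shrinking $\epsilon$ cannot fix this. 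Subtracting the constant $V_\infty:=\frac{2p\alpha}{p-1}$ and absorbing it into the free operator does not help either, since $\|V-V_\infty\|_\infty=V_\infty$ is still $O(1)$. Relatedly, your claim $e^{-(\tau-\sigma)L_0}\langle z\rangle^3\lesssim\langle z\rangle^3$ is off: without the projection the free propagator grows like $e^{2\alpha(\tau-\sigma)}$ on the polynomial weight (Lemma \ref{kernelEst}); only $e^{-rL_0}P^\alpha$ decays (Lemma \ref{EstiA}), and establishing that decay is itself nontrivial (it needs the tensor decomposition of the projection in Lemma \ref{IdDecom} and integration by parts for the top mode).

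The paper avoids the large potential entirely by a different decomposition. It first estimates the propagator $U_\alpha$ of the \emph{full} operator $L_\alpha=L_0+V$ composed with $P^\alpha$, using the Feynman--Kac representation $U(x,y)=U_0(x,y)E(x,y)$ (Proposition \ref{propagator}): since $V\ge0$, one has $|E|\le1$, and since $|\nabla_zV|\lesssim\beta^{1/2}$ (not $V$ itself, which is $O(1)$), the Lipschitz estimate $|\partial_y E|\lesssim r\beta^{1/2}$ lets one approximate $E(x,y)$ by its value at box centers and reduce to the free estimate of Lemma \ref{EstiA}, with a controllable $O(r(1+r)\beta^{1/2}e^{2\alpha r})$ error. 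Only then is Duhamel applied — around $U_\alpha$, not around $e^{-rL_0}$ — and the perturbation is the off-diagonal commutator $E_1=\bar P^\alpha L_\alpha P^\alpha+P^\alpha L_\alpha\bar P^\alpha$, which \emph{is} small ($O(\beta)$ in weighted norm because only the $z$-dependent part of the potential fails to commute with $P^\alpha$). You correctly identified that $E_1$ is $O(\beta)$ in your first paragraph, but then your Duhamel display uses the non-small $V$ as the perturbation instead, and omits any estimate of $U_\alpha P^\alpha$, which is where the real work lies. Without an argument of Feynman--Kac type (or an equally effective replacement for handling the $O(1)$ potential), the bootstrap does not close.
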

The proof of this theorem is given after Lemma \ref{LM:FK}. We
observe that in the $L^{2}$-norm $P^{\alpha}L_{\alpha}P^{\alpha}\geq
(-\Delta_z+\alpha z\cdot\nabla_z-2\alpha)P^{\alpha}\geq
\frac{1}{2}\alpha P^{\alpha}$. However, this does not help in
proving the weighted $L^{\infty}$ bound above. Recall the definition
of the operator
$L_{0}:=-\Delta_z+\alpha z\cdot\nabla_z-2\alpha$ in
(\ref{eqn:DefnL0V}) and define $U_0(x,y)$ as the integral kernel of
the operator $e^{-rL_{0}}$. We begin with
\begin{lemma}\label{kernelEst}
For $k=0,1,2,3,4$, any function $g$ and $r>0$ we have that
\begin{equation}
\|\langle z\rangle^{-k}e^{-L_{0}r}g\|_{\infty}\lesssim e^{2\alpha r}\|\langle
z\rangle^{-k}g\|_{\infty}
\label{est:99a}
\end{equation}
 or equivalently
\begin{equation}\label{eq:secondForm}
\int \langle
x\rangle^{-k}U_{0}(x,y)\langle
y\rangle^{k}dy\lesssim e^{2\alpha r}.
\end{equation}
\end{lemma}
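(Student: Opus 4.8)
The plan is to compute $e^{-rL_0}$ explicitly via the Mehler kernel. Since $L_0=-\Delta_z+\alpha z\cdot\nabla_z-2\alpha$, we have $e^{-rL_0}=e^{2\alpha r}T_r$, where $T_r$ is the Ornstein--Uhlenbeck semigroup generated by $\Delta_z-\alpha z\cdot\nabla_z$. This semigroup has the classical representation
$$
(T_rg)(x)=\int_{\R^n}g\big(e^{-\alpha r}x+\sqrt{v_r}\,\omega\big)\,(2\pi)^{-n/2}e^{-|\omega|^2/2}\,d\omega,\qquad v_r:=\frac{1-e^{-2\alpha r}}{\alpha},
$$
which one verifies by Taylor expanding $g$ in its spatial argument (the Gaussian has variance $\sim 2r$ as $r\to 0$, reproducing $\Delta_z$, and the mean $e^{-\alpha r}x$ reproduces $-\alpha z\cdot\nabla_z$). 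Equivalently, $T_r$ has the Gaussian integral kernel $M_r(x,y)=(2\pi v_r)^{-n/2}\exp\!\big(-|y-e^{-\alpha r}x|^2/(2v_r)\big)$, so $U_0(x,y)=e^{2\alpha r}M_r(x,y)$. I would take this formula as the definition of $e^{-rL_0}$ on the weighted $L^\infty$ spaces in use; the Gaussian decay of $M_r$ makes the integral absolutely convergent for $g\in\langle z\rangle^kL^\infty$.

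Next I would prove the $r$-uniform pointwise bound
$$
\int_{\R^n}M_r(x,y)\,\langle y\rangle^k\,dy\le C_{k,n,\alpha}\,\langle x\rangle^k,\qquad r>0 .
$$
In the probabilistic form the left side equals $\int_{\R^n}\langle e^{-\alpha r}x+\sqrt{v_r}\,\omega\rangle^k\,(2\pi)^{-n/2}e^{-|\omega|^2/2}\,d\omega$. Since $\alpha>0$ (recall $\alpha=a(T)\in[1/4,1]$), one has $v_r\le 1/\alpha$ and $e^{-2\alpha r}\le 1$ for all $r>0$, hence
$$
\langle e^{-\alpha r}x+\sqrt{v_r}\,\omega\rangle^2=1+|e^{-\alpha r}x+\sqrt{v_r}\,\omega|^2\le 1+2|x|^2+\tfrac{2}{\alpha}|\omega|^2\le 2\,\langle x\rangle^2\big(1+\tfrac{1}{\alpha}|\omega|^2\big).
$$
Raising to the power $k/2$ and integrating against the standard Gaussian gives the claim with $C_{k,n,\alpha}=2^{k/2}\int_{\R^n}\big(1+\alpha^{-1}|\omega|^2\big)^{k/2}(2\pi)^{-n/2}e^{-|\omega|^2/2}\,d\omega<\infty$. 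Multiplying through by $e^{2\alpha r}\langle x\rangle^{-k}$ and using $U_0=e^{2\alpha r}M_r$ yields exactly \eqref{eq:secondForm}.

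Finally, the equivalence of \eqref{est:99a} and \eqref{eq:secondForm} is immediate from the positivity of the kernel $U_0$: writing $(e^{-L_0r}g)(x)=\int U_0(x,y)g(y)\,dy$ one gets $|\langle x\rangle^{-k}(e^{-L_0r}g)(x)|\le\|\langle\cdot\rangle^{-k}g\|_\infty\int\langle x\rangle^{-k}U_0(x,y)\langle y\rangle^k\,dy$, so \eqref{eq:secondForm}$\Rightarrow$\eqref{est:99a}; conversely, applying \eqref{est:99a} to the locally bounded function $g=\langle\cdot\rangle^k$ (approximated, if one wants to stay within bounded functions, by its truncations $g_R$ with $\{|y|\le R\}$ and passing to the limit by monotone convergence) recovers \eqref{eq:secondForm}. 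I do not expect a genuine obstacle here; the only point requiring care is the uniformity in $r$, which is precisely why the estimate is phrased through the variance $v_r$ and the bound $v_r\le 1/\alpha$ — had $\alpha$ been zero or negative, neither the variance nor the exponential prefactor $e^{2\alpha r}$ would have been controlled in a way compatible with the stated bound.
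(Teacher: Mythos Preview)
Your proof is correct and takes a genuinely different route from the paper's. The paper exploits the fact that $1$ and $\alpha|z|^2-n$ are exact eigenfunctions of $L_0$ (with eigenvalues $-2\alpha$ and $0$), so that $e^{-rL_0}(|z|^2+1)=e^{2\alpha r}(\tfrac{n}{\alpha}+1)+(|z|^2-\tfrac{n}{\alpha})$ is computed explicitly; combined with the positivity of the kernel and the dominance $\|e^{-rL_0}g\|_\infty\le\|f^{-1}g\|_\infty\|e^{-rL_0}f\|_\infty$, this gives $k=2$ directly, with $k=0,4$ handled analogously and $k=1,3$ by interpolation. Your approach instead writes out the Mehler kernel and bounds the convolution integral directly via the uniform variance bound $v_r\le 1/\alpha$. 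The paper's argument is shorter and more algebraic, but yours has the advantage of treating all $k\ge 0$ (including non-integer $k$) uniformly in a single stroke, without needing to identify eigenfunctions or invoke interpolation for the odd cases.
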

\begin{proof} We only prove the case $k=2.$ The cases $k=0,4$ are
similar.  The cases $k=1,3$ follow from $k=0,2,4$ by an
interpolation result. For the case $k=2$, using that the integral
kernel of $e^{-r L_0}$ is positive and therefore $\|e^{-r L_0}
g\|_\infty\le \|f^{-1} g\|_\infty\|e^{-r L_0} f\|_\infty$ for any
$f>0$ and using that $e^{-r L_0} 1=e^{2\alpha r} 1$ and $e^{-r
L_0}(\alpha |z|^2-n)=(\alpha
|z|^2-n)$, we find that
$$
\begin{array}{lll}
\|\langle z\rangle^{-2}e^{-rL_{0}}g\|_{\infty} &\leq &\|\langle z\rangle^{-2}
e^{-rL_{0}}(|z|^{2}+1)\|_{\infty}\|\langle
z\rangle^{-2}g\|_{\infty}\\
&=& \|\langle z\rangle^{-2}[e^{2\alpha
r}(\frac{n}{\alpha}+1)+(|z|^{2}-\frac{n}{\alpha})]\|_{\infty}\|\langle
z\rangle^{-2}g\|_{\infty}\\
&\leq&2(\frac{n}{\alpha}+1)e^{2\alpha r}\|\langle
z\rangle^{-2}g\|_{\infty}.
\end{array}
$$
This implies \eqref{est:99a}.  To prove \eqref{eq:secondForm} we use
that $U_0(x,y)$ is, by definition, the integral kernel of the
operator $e^{-r L_0}$, and take $g(x)=\lra{x}^k$ in
\eqref{est:99a} to obtain \eqref{eq:secondForm}.
\end{proof}

Next we prove a more refined bound on the free evolution
$e^{-rL_0}$.

\begin{lemma}\label{EstiA}
For any function $g$ and positive constant $r$ we have
$$\left\|\langle z \rangle^{-3}e^{-rL_0}P^{\alpha}g\right\|_{\infty} \lesssim
e^{-\alpha r}\left\|\langle z \rangle^{-3}g\right\|_{\infty}.$$
\end{lemma}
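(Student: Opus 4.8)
I would first record that $L_0=-\Delta_z+\alpha z\cdot\nabla_z-2\alpha$ is self-adjoint on $L^2(\RR^n,e^{-\alpha|z|^2/2}\,dz)$, with eigenvalues $(k-2)\alpha$, $k=0,1,2,\dots$, the $(k-2)\alpha$-eigenspace being the span of the generalized Hermite polynomials of degree $k$. Thus $P^\alpha=I-Q^\alpha$ with $Q^\alpha=\Pi_0+\Pi_1+\Pi_2$, $\Pi_k$ the spectral projection onto degree $k$, and $e^{-rL_0}\Pi_k=e^{(2-k)\alpha r}\Pi_k$. Since $\Pi_kg$ is a degree-$k$ polynomial whose coefficients are multiples of $\int g\,(\mathrm{Hermite})\,e^{-\alpha|y|^2/2}\,dy$ and hence $\lesssim\|\langle z\rangle^{-3}g\|_\infty$, one gets $\|\langle z\rangle^{-k}\Pi_kg\|_\infty\lesssim\|\langle z\rangle^{-3}g\|_\infty$; in particular $Q^\alpha$ and $P^\alpha$ are bounded on $\langle z\rangle^3L^\infty$. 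This disposes of $0<r\le1$: by Lemma~\ref{kernelEst}, $\|\langle z\rangle^{-3}e^{-rL_0}(P^\alpha g)\|_\infty\lesssim e^{2\alpha r}\|\langle z\rangle^{-3}P^\alpha g\|_\infty\lesssim\|\langle z\rangle^{-3}g\|_\infty\lesssim e^{-\alpha r}\|\langle z\rangle^{-3}g\|_\infty$, the last step because $e^{-\alpha r}\gtrsim1$ for $r\le1$.

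\textbf{The range $r\ge1$: reduction to a kernel bound.} Put $\theta:=e^{-\alpha r}\in(0,e^{-\alpha}]$; the kernel of $e^{-rL_0}$ is $e^{2\alpha r}M_r(z,y)$ with the Mehler kernel $M_r(z,y)=\big(\tfrac{\alpha}{2\pi(1-\theta^2)}\big)^{n/2}\exp\!\big(-\tfrac{\alpha|y-\theta z|^2}{2(1-\theta^2)}\big)$. Mehler's formula gives $M_r(z,y)=\big(\tfrac{\alpha}{2\pi}\big)^{n/2}e^{-\alpha|y|^2/2}\sum_{k\ge0}\theta^kh_k(z,y)$ with $h_k$ a polynomial of degree $\le k$ in $z$ and in $y$, and $\Pi_kg(z)=\int\big(\tfrac{\alpha}{2\pi}\big)^{n/2}e^{-\alpha|y|^2/2}h_k(z,y)g(y)\,dy$. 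Subtracting the first three Mehler terms from $M_r$ yields $e^{-rL_0}P^\alpha g(z)=e^{2\alpha r}\int R_r(z,y)g(y)\,dy$, where $R_r(z,y):=\big(\tfrac{\alpha}{2\pi}\big)^{n/2}e^{-\alpha|y|^2/2}\sum_{k\ge3}\theta^kh_k(z,y)=M_r(z,y)-\big(\tfrac{\alpha}{2\pi}\big)^{n/2}e^{-\alpha|y|^2/2}\sum_{k=0}^2\theta^kh_k(z,y)$. Since $e^{2\alpha r}=\theta^{-2}$, the lemma follows once one proves
$$\int\langle y\rangle^3\,|R_r(z,y)|\,dy\ \lesssim\ \theta^3\,\langle z\rangle^3\qquad\text{uniformly in }z\in\RR^n,\ \theta\in(0,e^{-\alpha}].$$

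\textbf{Proof of the kernel bound.} I would split on the size of $\theta|z|$. If $\theta|z|\ge1$: bound $|R_r|\le M_r+\big(\tfrac{\alpha}{2\pi}\big)^{n/2}e^{-\alpha|y|^2/2}\big|\sum_{k\le2}\theta^kh_k\big|$ and integrate termwise; since $M_r(z,\cdot)$ is a Gaussian of bounded variance centred at $\theta z$ one has $\int\langle y\rangle^3M_r(z,y)\,dy\lesssim\langle\theta z\rangle^3\lesssim(\theta|z|)^3$, and the polynomial term contributes $\lesssim1+\theta|z|+\theta^2|z|^2\lesssim(\theta|z|)^3$ (using $\theta|z|\ge1$); both are $\lesssim\theta^3\langle z\rangle^3$. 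If $\theta|z|\le1$: view $\phi_z(\theta',y):=\big(\tfrac{\alpha}{2\pi(1-\theta'^2)}\big)^{n/2}\exp\!\big(-\tfrac{\alpha|y-\theta'z|^2}{2(1-\theta'^2)}\big)$ as a function of $\theta'$; by Mehler its Taylor coefficients at $\theta'=0$ are $\big(\tfrac{\alpha}{2\pi}\big)^{n/2}e^{-\alpha|y|^2/2}h_k(z,y)$, so $R_r(z,y)$ is exactly the third-order Taylor remainder and $|R_r(z,y)|\le\tfrac{\theta^3}{2}\sup_{0\le\theta'\le\theta}|\partial_{\theta'}^3\phi_z(\theta',y)|$. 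A direct computation gives $\partial_{\theta'}^3\phi_z=\phi_z\,Q_3(\theta',z,y)$, where the $z$-degree-$(3+j)$ part of $Q_3$ carries the factor $\theta'^{j}$ — equivalently, $h_k$ has $z$-degree $k$ — so $|Q_3|\lesssim(1+\theta'|z|)^3(1+|z|)^3(1+|y|)^{C}$ for some fixed $C$, while $\phi_z(\theta',y)\lesssim e^{-\frac{\alpha}{2}|y-\theta'z|^2}$ for $\theta'\le e^{-\alpha}$; integrating against $\langle y\rangle^3$ and using $\theta'\le\theta$, $\theta|z|\le1$ gives $\int\langle y\rangle^3|R_r(z,y)|\,dy\lesssim\theta^3(1+|z|)^3\lesssim\theta^3\langle z\rangle^3$, as required.

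\textbf{Main obstacle.} The delicate step is the kernel bound in the intermediate regime $1\ll|z|\ll\theta^{-1}$: one must verify that differentiating the translating-and-spreading Gaussian $\phi_z(\theta',\cdot)$ three times in $\theta'$ produces no power of $|z|$ beyond $|z|^3$ that is unaccompanied by a compensating factor of $\theta'$ — precisely the fact that the Mehler/Hermite coefficient $h_k$ has $z$-degree exactly $k$ — since otherwise the surplus $|z|$-powers would defeat the gain once the Taylor remainder is multiplied by $e^{2\alpha r}=\theta^{-2}$.
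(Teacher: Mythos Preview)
Your proof is correct and takes a genuinely different route from the paper's.

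The paper exploits the product structure $L_0=\sum_{k=1}^n L_0^{(k)}-2\alpha$: it proves a combinatorial identity (Lemma~\ref{IdDecom}) that writes $P^\alpha$ as a sum of tensor products $P_{\vec i}=\prod_k P_{i_k}^{(k)}$ of one-dimensional spectral projections with $|\vec i|=3$, then uses $\langle z\rangle^{-3}\le\prod_k\langle z_k\rangle^{-i_k}$ to reduce the estimate to the one-dimensional bound $\|\langle z_k\rangle^{-m}e^{-rL_0^{(k)}}P_m^{(k)}\langle z_k\rangle^m\|_{L^\infty\to L^\infty}\lesssim e^{-m\alpha r}$, which in the nontrivial case $m=3$ is obtained by three integrations by parts against the explicit Mehler kernel. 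Your argument instead works directly in $n$ dimensions: you recognise the kernel of $e^{-rL_0}P^\alpha$ as $e^{2\alpha r}$ times the third-order Taylor remainder in $\theta'=e^{-\alpha r}$ of the moving Gaussian $\phi_z(\theta',y)$, and control this remainder by the elementary dichotomy $\theta|z|\gtrless 1$.

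Two remarks on the write-up. First, the justification you give for the structure of $Q_3$ (``equivalently, $h_k$ has $z$-degree $k$'') is heuristic; the clean way to get it is to pass to the moving frame $u=y-\theta'z$, so that $\partial_{\theta'}|_y=\partial_{\theta'}|_u-z\cdot\nabla_u=:D+Z$ with $[D,Z]=0$, whence $(D+Z)^3G_{\theta'}(u)$ is a polynomial of degree at most $3$ in $z$ with bounded coefficients in $(u,\theta')$; substituting back $u=y-\theta'z$ then gives precisely your claim that extra $z$-powers beyond $3$ come with matching $\theta'$-powers. Second, the exponent on $(1+\theta'|z|)$ that actually falls out is larger than $3$, but this is harmless since you only use it in the regime $\theta|z|\le1$.

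What your approach buys is that it avoids the somewhat involved combinatorics of Lemma~\ref{IdDecom} and the separate $1$-D integration-by-parts computation; what the paper's approach buys is modularity --- the same $1$-D building blocks immediately yield the analogous estimates with other weight exponents (e.g.\ Lemma~\ref{kernelEst}) without any new kernel analysis.
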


\begin{proof}  First, we decompose the projection $P^{\alpha}$ in a
convenient way. We write the operator $L_0$ as
\begin{equation}\label{L0Decom}
L_0 =\sum_{k=1}^{n}L_0^{(k)}-2\alpha,\quad \text{where}\quad
L_0^{(k)}:=-\p_{z_k}^2+\alpha z_k\p_{z_k}.
\end{equation}
The spectra of the operators $L_0^{(k)}$ are:
\begin{align}
\sigma\lb L_0^{(k)}\rb&=\left\{m\alpha |\ m=0,1,2,\ldots\right\}.
\end{align}

Let $P_0^{(k)}$, $P_1^{(k)}$ and $P_2^{(k)}$ be the orthogonal projections onto the eigenspaces of the operator $L_0^{(k)}$ corresponding to the first, the second and third eigenvalues of $L_0^{(k)}$, respectively, and let
$$P_3^{(k)}:=1-P_0^{(k)}-P_1^{(k)}-P_2^{(k)},$$
$$P_{0'}^{(k)}:=1,\ P_{1'}^{(k)}:=1-P_0^{(k)},$$
$$P_{2'}^{(k)}:=1-P_0^{(k)}-P_1^{(k)}.$$
Then for any $k$, we have
\begin{equation}\label{eqn:projectdecom}
\begin{array}{l}
P_0^{(k)}+P_1^{(k)}+P_2^{(k)}+P_3^{(k)}=1,\\
P_{0'}^{(k)}=1,\ P_0^{(k)}+P_{1'}^{(k)}=1,\\
P_0^{(k)}+P_1^{(k)}+P_{2'}^{(k)}=1.
\end{array}
\end{equation}

Let $\vec{i}=(i_1,i_2,\cdots,i_n),\ i_j=0,\ 0',\ 1,\ 1',\ 2,\ 2',\ 3,$  $
 |\vec{i}|=\sum_{j=1}^n i_j,$ where the primed numbers are counted as the usual ones, and
$ P_{\vec{i}}=P_{i_1}^{(1)}P_{i_2}^{(2)}\cdots P_{i_n}^{(n)}$. For every $k\in \{1, \cdots, n\}$,
we introduce the set
 $$I_k\equiv I_k^{(n)}=\{\vec{i}=(i_1,\cdots,i_n)|\,
 \text{either}\,  |\vec{i}|=3 \  \text{and}\, i_k\neq 0',\ 1',\ 2',\    \text{or}\, |\vec{i}|<3\
 \text{and}\, i_j\neq 0',\ 1',\ 2'\  \forall 1\leq j\leq n  \}.$$
Then we have the following lemma, whose proof is given in Appendix 2: 
\begin{lemma}\label{IdDecom}
For any $1\leq k\leq n$, there exists a subset $J_k$ of $I_k$ such
that $1=\sum_{\vec{i}\in J_k}P_{\vec{i}}$.
\end{lemma}

Since for any $k$ $$\sum_{\vec{i}\in I_k, |\vec{i}|=j, i_l\neq 0', 1', 2'\ \forall l}P_{\vec{i}}$$ is the eigenprojection corresponding to the $j-$th eigenvalue of $L_0,\ j=0, 1, 2$,
we have, by the definition of $P^{\alpha}$ and Lemma \ref{IdDecom}, that
\begin{equation}\label{P_alpha}
P^{\alpha}=\sum_{|\vec{i}|=3,
\vec{i}\in J_k}P_{\vec{i}}, \ \forall k\in \{1,2,\cdots,n \}.
\end{equation}
Equations \eqref{L0Decom} and  \eqref{P_alpha} give
\begin{equation}\label{Decomposition}
e^{-rL_0}P^{\alpha}=\sum_{|\vec{i}|=3, \vec{i}\in
J_k}e^{-rL_0}P_{\vec{i}} =e^{2\alpha r}\sum_{|\vec{i}|=3,
\vec{i}\in J_k}\prod_{j=1}^n\lb e^{-rL_0^{(j)}}P_{i_j}^{(j)}\rb .
\end{equation}

In the following, it is convenient to use the notation $z_0:=1$. By the inequality $\langle z \rangle^{3} \lesssim \sum_{k=0}^n|z_k|^3$, we have
\begin{equation}\label{normtoA_k}
\left\|\langle z \rangle^{-3}e^{-rL_0}P^{\alpha}
\langle z \rangle^{3}\right\|_{L^{\infty}\rightarrow L^{\infty}} \lesssim 
\left\|\langle z \rangle^{-3}e^{-rL_0}P^{\alpha}\sum_{k=0}^n|z_k|^3\right\|_{L^{\infty}\rightarrow L^{\infty}}\lesssim
\sum_{k=0}^nA_k,
\end{equation}
where $A_k=\left\|\langle z \rangle^{-3}e^{-rL_0}P^{\alpha}|z_k|^3\right\|_{L^{\infty}\rightarrow L^{\infty}}$ for $0\leq
k\leq n$. Now by \eqref{Decomposition} and $\langle z
\rangle^{-3}\leq \prod_{j=1}^n\langle z_j \rangle^{-i_j}$, we obtain
that for $0\leq k\leq n$,
\begin{align*}
A_k&\leq\sum_{\vec{i}\in J_k, |\vec{i}|=3}\left\|\langle z
\rangle^{-3}
e^{-rL_0}P_{\vec{i}}|z_k|^3
\right\|_{L^{\infty}\rightarrow L^{\infty}}\\
&\lesssim \sum_{\vec{i}\in J_k, |\vec{i}|= 3}\left\|\prod_{j=1}^n\langle z_j \rangle^{-i_j}
e^{-rL_0}P_{\vec{i}}|z_k|^3
\right\|_{L^{\infty}\rightarrow L^{\infty}}\\
&=e^{2\alpha r}\sum_{\vec{i}\in J_k, |\vec{i}|= 3}\left\|
\prod_{j=1}^n\lb \langle z_j \rangle^{-i_j}
e^{-rL_0^{(j)}}P_{i_j}^{(j)}|z_k|^{3\delta_{jk}}
\rb\right\|_{L^{\infty}\rightarrow L^{\infty}}.
\end{align*}

We claim that, if $\vec{i}\in J_k$, then
\begin{equation}\label{x_jnorm}
\left\|\langle z_j \rangle^{-i_j}
e^{-rL_0^{(j)}}P_{i_j}^{(j)}|z_k|^{3\delta_{jk}}
\right\|_{L^{\infty}\rightarrow L^{\infty}}\lesssim
e^{-i_j\alpha r},
\end{equation}
Indeed if $j=k\geq
1$, then $i_k\neq 0',\ 1',\ 2'$. For $i_j=0,\ 1,\,\ \text{or}\,\ 2$
\eqref{x_jnorm} follows from the relation $e^{-rL_0^{(j)}}P_{i_j}^{(j)}=e^{-i_j\alpha
r}P_{i_j}^{(j)}$ which is due to the definition of $P_{i_j}^{(j)}$. For $i_j=3$ it is proved in \cite{DGSW} using
integration by parts (see Appendix 2).
If $j\neq k$ (which is, in particular, the
case when $k=0$), the proof is similar. Then by the above two
inequalities and the relation $\sum_{j=1}^n i_j=|\vec{i}|=3$, we
obtain
\begin{equation}\label{EstiAk}
A_k\lesssim e^{-\alpha r}\  \text{for}\ 0\leq k\leq n.
\end{equation}
Equations \eqref{normtoA_k} and \eqref{EstiAk} imply
the statement of the lemma.
\end{proof}

Next, we estimate the propagator $U_{\alpha }(\tau,\sigma),$
generated by the operator $-L_{\alpha}$.
\begin{prop}\label{propagator}
For any function $g$ and positive constants $\sigma$ and $r$ we have
\begin{equation}\label{EstiAB}
\|\langle z\rangle^{-3}
U_{\alpha}(\sigma+r,\sigma)P^{\alpha}g\|_{\infty} \lesssim
[e^{2\alpha r}r(1+r){\beta^{1/2}(\sigma)}+e^{-\alpha r}]\|\langle
z\rangle^{-3} g\|_{\infty}.
\end{equation}
\end{prop}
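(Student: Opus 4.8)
The idea is to decompose $U_\alpha$ with respect to the splitting of $L^2(\R^n,e^{-\alpha|z|^2/2}dz)$ induced by $L_0$ and to exploit that the coupling between the three lowest eigenspaces of $L_0$ and $\Ran P^\alpha$ passes only through $V$, which is small in the relevant sense. Put $p:=P^\alpha$ and $q:=I-P^\alpha$, so $q$ is the finite-rank projection onto the span of $1,z_i,z_iz_j$ (the $L_0$-eigenvectors with eigenvalue in $\{-2\alpha,-\alpha,0\}$). Since $L_0$ commutes with $p$ and $q$ while $L_\alpha=L_0+V$, the off-block parts of $L_\alpha$ are $pVq$ and $qVp$. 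The smallness input I would use is that, since $\tilde{\beta}$ is decreasing one has $V\lesssim\min(1,\beta(\sigma)\langle z\rangle^2)$ on $[\sigma,\sigma+r]$, which gives $\|V\varphi\|_{L^2(e^{-\alpha|z|^2/2}dz)}\lesssim\beta^{1/2}(\sigma)$ for any polynomial $\varphi$ of degree $\le2$; as $qVp$ and $pVq$ are built from exactly such inner products, $\|qVp\|,\|pVq\|\lesssim\beta^{1/2}(\sigma)$ in the $\langle z\rangle^{-3}$-weighted $L^\infty$ norm.

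First I would establish two ingredients. \emph{(A) Decay of the compressed propagator:} for $h\in\Ran p$, $\|\langle z\rangle^{-3}e^{-r\,pL_\alpha p}h\|_\infty\lesssim e^{-\alpha r}\|\langle z\rangle^{-3}h\|_\infty$, an analogue of Lemma \ref{EstiA}. To prove it, write $L_\alpha=L_1-\tilde{V}$ with $L_1:=L_0+\tfrac{2p\alpha}{p-1}$ and $\tilde{V}:=\tfrac{2p\alpha}{p-1+z\tilde{\beta} z}\ge0$; since $e^{-rL_1}=e^{-\frac{2p\alpha}{p-1}r}e^{-rL_0}$, Lemma \ref{EstiA} gives $\|\langle z\rangle^{-3}e^{-r\,pL_1p}h\|_\infty\lesssim e^{-\frac{\alpha(3p-1)}{p-1}r}\|\langle z\rangle^{-3}h\|_\infty$ on $\Ran p$; running Duhamel off $pL_1p$ and using $\|\tilde{V}\|_\infty=\tfrac{2p\alpha}{p-1}$, the exponents combine to $-\tfrac{\alpha(3p-1)}{p-1}+\tfrac{2p\alpha}{p-1}=-\alpha$. \emph{(B) Growth of the bad block:} since $\Ran q$ is finite-dimensional and $qL_\alpha q\ge qL_0q\ge-2\alpha$, all norms on $\Ran q$ being equivalent, $\|e^{-r\,qL_\alpha q}\|\lesssim e^{2\alpha r}$.

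Then I would run a block Duhamel expansion. Writing $U_\alpha(\sigma+r,\sigma)P^\alpha g=(U_{pp}g_p,U_{qp}g_p)$ with $g_p:=P^\alpha g$, and using $[L_0,p]=[L_0,q]=0$ (so $pL_\alpha q=pVq$, $qL_\alpha p=qVp$), the blocks satisfy
\[
U_{pp}(\sigma+r,\sigma)=e^{-r\,pL_\alpha p}-\int_0^r e^{-(r-s)pL_\alpha p}(pVq)\,U_{qp}(\sigma+s,\sigma)\,ds,\qquad
U_{qp}(\sigma+r,\sigma)=-\int_0^r e^{-(r-s)qL_\alpha q}(qVp)\,U_{pp}(\sigma+s,\sigma)\,ds .
\]
Inserting (A), (B) and $\|pVq\|,\|qVp\|\lesssim\beta^{1/2}(\sigma)$ produces coupled integral inequalities for $A(r):=\|\langle z\rangle^{-3}U_{pp}(\sigma+r,\sigma)g_p\|_\infty$ and $B(r):=\|\langle z\rangle^{-3}U_{qp}(\sigma+r,\sigma)g_p\|_\infty$, of the schematic form $A(r)\lesssim e^{-\alpha r}\|\langle z\rangle^{-3}g\|_\infty+\beta^{1/2}(\sigma)\int_0^r e^{-\alpha(r-s)}B(s)\,ds$ and $B(r)\lesssim\beta^{1/2}(\sigma)\int_0^r e^{2\alpha(r-s)}A(s)\,ds$. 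Since $\beta(\sigma)\ll1$ these solve by iteration, the $s$-integrals producing the resonant factors, giving $A(r)\lesssim(e^{-\alpha r}+\beta(\sigma)r\,e^{2\alpha r})\|\langle z\rangle^{-3}g\|_\infty$ and $B(r)\lesssim\beta^{1/2}(\sigma)\,r(1+r)\,e^{2\alpha r}\|\langle z\rangle^{-3}g\|_\infty$; adding them and using $\|\langle z\rangle^{-3}U_\alpha(\sigma+r,\sigma)P^\alpha g\|_\infty\lesssim A(r)+B(r)$ yields the claim.

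The main obstacle is ingredient (A): getting the decay $e^{-\alpha r}$ rather than mere growth $e^{2\alpha r}$ for the compressed propagator $e^{-r\,P^\alpha L_\alpha P^\alpha}$ in the weighted $L^\infty$ norm. Perturbing $e^{-r\,P^\alpha L_0 P^\alpha}$ (which decays like $e^{-\alpha r}$ by Lemma \ref{EstiA}) by $P^\alpha VP^\alpha$ naively is useless, since $\|V\|_\infty$ is $O(1)$, not small, so Duhamel would consume the entire spectral gap. The rewriting $L_\alpha=L_1-\tilde{V}$ fixes this: the shift $\tfrac{2p\alpha}{p-1}$ raises the gap of $L_1$ on $\Ran P^\alpha$ from $\alpha$ to $\tfrac{\alpha(3p-1)}{p-1}$, exactly enough to absorb $\|\tilde{V}\|_\infty=\tfrac{2p\alpha}{p-1}$ and leave rate $\alpha$. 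Making this quantitative in the weighted $L^\infty$ norm — in particular, keeping the $O(1)$ constants from Lemma \ref{EstiA} and from the finite-rank projection $q$ (which, as one checks, contribute only $O(\beta^{1/2}(\sigma))$ corrections to $\|P^\alpha\tilde{V}P^\alpha\|$) from degrading the exponent — is the delicate part and carries the technical weight; the block algebra and the concluding iteration are routine once (A) is available.
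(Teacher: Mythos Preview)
Your architecture reverses the paper's logic: ingredient (A) --- decay $e^{-\alpha r}$ for the compressed propagator $e^{-r\,P^\alpha L_\alpha P^\alpha}$ in $\langle z\rangle^{-3}L^\infty$ --- is essentially Theorem~\ref{ProP}, which in the paper is \emph{deduced from} Proposition~\ref{propagator}, not used to prove it. More importantly, your proof of (A) has a real gap.

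Your Duhamel runs $e^{-rP^\alpha L_1P^\alpha}$ (rate $\tfrac{\alpha(3p-1)}{p-1}$ on $\Ran P^\alpha$ by Lemma~\ref{EstiA}) against the perturbation $P^\alpha\tilde{V}P^\alpha$ of size $\|\tilde{V}\|_\infty=\tfrac{2p\alpha}{p-1}$, noting that $\tfrac{\alpha(3p-1)}{p-1}-\tfrac{2p\alpha}{p-1}=\alpha$. But Lemma~\ref{EstiA} carries a constant $C_0>1$ (from $\langle z\rangle^3\lesssim\sum_k|z_k|^3$ and the one-dimensional bounds \eqref{x_jnorm}), and Gronwall in a norm with prefactor $C_0$ produces rate $-\tfrac{\alpha(3p-1)}{p-1}+C_0\cdot\tfrac{2p\alpha}{p-1}$, which is $\le-\alpha$ only if $C_0\le1$. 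Your parenthetical about $O(\beta^{1/2})$ corrections from $q$ addresses $q\tilde VP^\alpha$, which is indeed small --- but that is not the obstruction. The obstruction is $C_0$ multiplying the $O(1)$ main term $\|\tilde V\|_\infty$, and there is no smallness anywhere to absorb it. The exponent arithmetic does not close, and no short-time/long-time iteration rescues it (iterating on steps of length $\delta$ accumulates $C_0^{r/\delta}$).

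The paper's proof avoids any Gronwall against an $O(1)$ perturbation. It uses the Feynman--Kac representation $U_\alpha(\sigma+r,\sigma)(x,y)=U_0(x,y)E(x,y)$, where $E$ is the Ornstein--Uhlenbeck expectation of $\exp\bigl(-\int V\bigr)$; the key input is $|\partial_y E|\lesssim r\beta^{1/2}(\sigma)$, which follows from $|\nabla_z V|\lesssim\beta^{1/2}$ --- it is the \emph{gradient} of $V$, not $V$ itself, that is small. One partitions $\R^n$ into boxes of diameter $\lesssim 1+r$, freezes $E(x,\cdot)$ at the box centers, and splits $U_\alpha P^\alpha g=A+B$. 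In $A$ the frozen $E(x,\lambda)$ factors out of the $y$-integral and one is left with $e^{-rL_0}P^\alpha$ acting on a function pointwise dominated by $|g|$, so Lemma~\ref{EstiA} gives the $e^{-\alpha r}$ term directly, with no feedback loop and hence no constant to iterate. The unfreezing error $B$ is controlled by the mean-value theorem together with Lemma~\ref{kernelEst}, yielding the $r(1+r)\beta^{1/2}e^{2\alpha r}$ term.
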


\begin{proof}Let $B_{\lambda}, \lambda\in \frac{R}{2}\mathbb{Z}^n$, be
a collection of semi-open, disjoint boxes centered at $\lambda$, of
sidelength $R$, whose union is $\R^n$. We take $R\leq \frac{1+r}{2}$. Let
$g_{\lambda}(x)=g(x)\chi_{\lambda}(x)$, where $\chi_{\lambda}(x)$ is
the characteristic function of $B_{\lambda}$. Then
$g(x)=\sum_{\lambda}g_{\lambda}(x)$. Let
\begin{equation}\label{FK2}
E(x,y)= \int_{\sigma}^{\sigma+r} e^{
-\int_{\sigma}^{\sigma+r}V(\sigma+s,\omega(s)+\omega_{0}(s))ds}
d\mu(\omega),
\end{equation}
where $d\mu(\omega)$ is an $n$-dimensional harmonic oscillator
(Ornstein-Uhlenbeck) probability measure on the continuous paths
$\omega: [\sigma,\sigma+r]\rightarrow \mathbb{R}$ with the boundary
condition $\omega(\sigma)=\omega(\sigma+r)=0$ and
\begin{equation}
\omega_{0}(s)=e^{\alpha (\tau-s)}\frac{e^{2\alpha\sigma}-e^{2\alpha
s}}{e^{2\alpha\sigma}-e^{2\alpha \tau}}x+e^{\alpha
(\sigma-s)}\frac{e^{2\alpha\tau}-e^{2\alpha s}}{e^{2\alpha
\tau}-e^{2\alpha\sigma}}y. \label{eqn:B0}
\end{equation} It
is shown in the Appendix that
\begin{equation}
|\partial_yE(x,y)|\lesssim r\beta^{\frac{1}{2}}.
\label{EstimatePotential}
\end{equation}
Recall that $U_\alpha(\tau,\sigma)$ is the evolution generated
by $-L_\alpha$.
Let $U(x,y)$ and $U_0(x,y)$ be the integral kernels of the operators
$U_{\alpha}(\sigma+r,\sigma)$
and $e^{-rL_{0}}$, respectively. By Feynmann-Kac formula
\eqref{eqn:BNeg1}, proved in the Appendix, we have that
$U(x,y)=U_0(x,y)E(x,y)$. Then
\begin{align}
U_{\alpha}(\sigma+r,\sigma)P^{\alpha}g(x)&=
\int U_0(x,y)E(x,y)P^{\alpha}g(y)dy \label{FK1}\\
&= \sum_{\lambda}\int
U_0(x,y)E(x,y)
P^{\alpha}g_{\lambda}(y)dy =:A(x)+B(x),
\label{Decompose}
\end{align}
where $$A(x):=\sum_{\lambda}\int
U_0(x,y)E(x,\lambda)
P^{\alpha}g_{\lambda}(y)dy$$ and
$$B(x):=\sum_{\lambda}\int
U_0(x,y)[E(x,y)-E(x,\lambda)]
P^{\alpha}g_{\lambda}(y)dy.$$

First we estimate the function $A$. We rewrite $A(x)=
\int U_0(x,y)P^{\alpha}g_x(y)dy=\lb e^{-rL_0}P^{\alpha}g_x\rb (x)$
with $g_x(y)=\sum_{\lambda}E(x,\lambda)g_{\lambda}(y)$. Now by
Lemma \ref{EstiA} we have
\begin{align*}
\left\|\langle x \rangle^{-3}
A\right\|_{\infty} &=\left\|\langle x
\rangle^{-3}
e^{-rL_0}P^{\alpha}g_x\right\|_{\infty}\\
&\lesssim e^{-\alpha r}\sup_y |\langle y \rangle^{-3}
g_x(y)|.
\end{align*}
Since $|E(x,\lambda)|\leq 1$ and ${g_{\lambda}}$'s have disjoint
supports), we obtain $|g_x(y)|\leqslant
\sum_{\lambda}|g_{\lambda}|=|g|$. The last two inequalities give
\begin{equation}
\left\|\langle x \rangle^{-3}A\right\|_{\infty}\lesssim e^{-\alpha r}\left\|\langle x
\rangle^{-3}g\right\|_{\infty}.
\label{EstimateA}
\end{equation}

Next we estimate the function $B$. Using $U_0(x,y)>0$,
\eqref{EstimatePotential}, Mean Value Theorem and the fact that the
diameters of $B_\lambda$ are not greater than $1+r$, we obtain
\begin{equation}\label{EstiB}
|B(x)|\lesssim r(1+r)\beta^{\frac{1}{2}}
\int U_0(x,y)\sum_{\lambda}|P^{\alpha}g_{\lambda}(y)|dy
=r(1+r)\beta^{\frac{1}{2}}\lb
e^{-rL_0}\sum_{\lambda}|P^{\alpha}g_{\lambda}(y)|\rb(x).
\end{equation}
Thus by \eqref{EstiB}, Lemma \ref{kernelEst} and the relation
$|g|=\sum_{\lambda}|g_{\lambda}|$,
\begin{equation}
\left\|\langle x \rangle^{-3}B\right\|_{\infty}\lesssim
r(1+r)\beta^{\frac{1}{2}}e^{2\alpha r}\left\|\langle x
\rangle^{-3}g\right\|_{\infty}.
\label{EstimateB}
\end{equation}
Combining \eqref{Decompose}, \eqref{EstimateA} and
\eqref{EstimateB}, we obtain the estimate \eqref{EstiAB}. This
proves Proposition \ref{propagator}.
\end{proof}

We will also need the following lemma
\begin{lemma}\label{LM:FK}
\begin{equation}\label{Mehler}
\|\langle z\rangle^{-k}
U_{\alpha}(\tau,\sigma)g\|_{\infty}\leq
e^{2\alpha(\tau-\sigma)}\|\langle z\rangle^{-k}
g\|_{\infty}
\end{equation} with $k=0\ \text{or}\ 3.$
\end{lemma}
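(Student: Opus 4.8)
The plan is to dominate the full propagator pointwise by the free one and then use the elementary mapping properties of $e^{-rL_0}$ that are already available. Recall from \eqref{eqn:DefnL0V} that $L_\alpha=L_0+V$ with $V=\frac{2p\alpha}{p-1}-\frac{2p\alpha}{p-1+z\tilde\beta z}$, and note that $V\ge 0$: indeed $\alpha>0$ and, by \eqref{FunBTau}, $\tilde\beta(\tau)>0$, so $z\tilde\beta z\ge 0$ and $\frac{2p\alpha}{p-1+z\tilde\beta z}\le\frac{2p\alpha}{p-1}$. First I would invoke the Feynman--Kac representation already used in the proof of Proposition \ref{propagator}: the integral kernel of $U_\alpha(\sigma+r,\sigma)$ is $U(x,y)=U_0(x,y)E(x,y)$, with $E$ given by \eqref{FK2} (see \eqref{eqn:BNeg1}) and $U_0$ the kernel of $e^{-rL_0}$. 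Since $V\ge 0$ and $d\mu(\omega)$ is a probability measure, $0\le E(x,y)\le 1$, and since $U_0>0$ this gives the pointwise domination $0\le U(x,y)\le U_0(x,y)$. Hence it suffices to estimate $\langle z\rangle^{-k}e^{-rL_0}|g|$ in terms of $\langle z\rangle^{-k}|g|$, with $r:=\tau-\sigma$.

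Next I would handle the two values of $k$ separately. For $k=0$, the computation $L_0\cdot 1=-2\alpha$ gives $e^{-rL_0}1=e^{2\alpha r}1$, i.e.\ $\int U_0(x,y)\,dy=e^{2\alpha r}$ for every $x$; together with $0\le U(x,y)\le U_0(x,y)$ this yields $|(U_\alpha(\sigma+r,\sigma)g)(x)|\le e^{2\alpha r}\|g\|_\infty$, which is \eqref{Mehler} for $k=0$. For $k=3$, I would use $|g(y)|\le\langle y\rangle^3\,\|\langle z\rangle^{-3}g\|_\infty$ to write
\begin{equation*}
\langle x\rangle^{-3}\,\big|(U_\alpha(\sigma+r,\sigma)g)(x)\big|\le\Big(\langle x\rangle^{-3}\!\int U_0(x,y)\,\langle y\rangle^3\,dy\Big)\|\langle z\rangle^{-3}g\|_\infty,
\end{equation*}
and then invoke Lemma \ref{kernelEst}, i.e.\ \eqref{eq:secondForm} with $k=3$ (equivalently, interpolation between the cases $k=2$ and $k=4$ of \eqref{est:99a}), to bound the parenthesis by $e^{2\alpha r}$ up to a constant. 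The same two lines apply verbatim for every $k\in\{0,1,2,3,4\}$.

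There is no real obstacle here beyond bookkeeping; the one genuine point is the constant in the $k=3$ case. Lemma \ref{kernelEst} only yields $\langle x\rangle^{-3}\int U_0(x,y)\langle y\rangle^3\,dy\le C(n,\alpha)\,e^{2\alpha r}$ with some $C\ge 1$, and this cannot be sharpened to $C=1$: from the Ornstein--Uhlenbeck (Mehler) representation of $e^{-rL_0}$ and Jensen's inequality one gets $(e^{-rL_0}\langle\cdot\rangle^3)(0)>e^{2\alpha r}$ for $r>0$. So \eqref{Mehler} is to be read with this harmless implicit constant; in the later applications, where it is combined with the sharper estimate of Proposition \ref{propagator} and with Theorem \ref{ProP}, only the exponential rate $2\alpha$ is used.
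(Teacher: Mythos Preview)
Your proof is correct and follows essentially the same approach as the paper: use the Feynman--Kac representation $U(x,y)=U_0(x,y)E(x,y)$ together with $V\ge 0$ to get the pointwise domination $0\le U(x,y)\le U_0(x,y)$, then invoke Lemma \ref{kernelEst}. Your observation about the implicit constant for $k=3$ is a valid point that the paper's statement glosses over; as you note, only the exponential rate matters in the applications.
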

\begin{proof}
By Equations (\ref{FK2}) and (\ref{eqn:BNeg1}) we have that
$|U_{\alpha}(\tau,\sigma)|(x,y)\leq e^{-L_{0}(\tau-\sigma)}(x,y).$
Thus we have
\begin{equation}\label{FeyKac}
\begin{array}{lll}
\|\langle z\rangle^{-k}
U_{\alpha}(\tau,\sigma)g\|_{\infty}& \le & \|\langle
z\rangle^{-k}
e^{-L_{0}(\tau-\sigma)}|g|\|_{\infty}.
\end{array}
\end{equation}  Now we use Lemma \ref{kernelEst} to
estimate the right hand side to complete the proof.
\end{proof}

{\textbf{Proof of Theorem \ref{ProP}}}.  Recall that
$\bar{P}_\alpha$ is the projection on the span of the three first
eigenfunctions of the operator $L_0$ and
$P^{\alpha}:=1-\bar{P}^{\alpha}$. We write
\begin{equation}\label{TranForm}
L_{\alpha}=P^\alpha L_{\alpha} P^\alpha+\bar{P}^\alpha L_{\alpha
}\bar{P}^\alpha+E_1,
\end{equation}
where the operator $E_{1}$ is defined as $E_{1}:=\bar{P}^{\alpha}
L_{\alpha }P^\alpha+P^\alpha L_{\alpha }\bar{P}^\alpha.$ Using that
$\bar{P}^\alpha P^\alpha=0$, we transform $E_1$ as
$$
\begin{array}{lll}
E_{1}&=&-\bar{P}^{\alpha}\frac{2p\alpha z\tilde{\beta}z}{(p-1)(p-1+z\tilde{\beta}z)}P^{\alpha}-
P^{\alpha}\frac{2p\alpha z\tilde{\beta}z}{(p-1)(p-1+z\tilde{\beta}z)}\bar{P}^{\alpha}.
\end{array}
$$  This relation implies that
\begin{equation}\label{EstD2}
\|\langle z\rangle^{-3}
E_{1}\eta(\sigma)\|_{\infty}\lesssim
\beta(\tau(\sigma))\|\langle z\rangle^{-3}
\eta(\sigma)\|_{\infty}.
\end{equation}
We use the Duhamel principle to rewrite the propagator
$V_{\alpha}(\sigma_{1},\sigma_{2})$ on ${\rm Ran}\, P^\alpha$
as
\begin{equation}
V_{\alpha}(\sigma_{1},\sigma_{2})P^\alpha=
U_{\alpha}(\sigma_{1},\sigma_{2})P^\alpha-
\int_{\sigma_{2}}^{\sigma_{1}}
U_{\alpha}(\sigma_{1},s)E_{1}V_{\alpha}(s,\sigma_{2}) P^\alpha
ds. \label{eqn:106}
\end{equation}
Let $r=\sigma_{1}-\sigma_{2}$, $g\in \Ran P^{\alpha}$ and
$\eta(\sigma_{1}):=V_{\alpha}(\sigma_{1},\sigma_{2})g$. We
claim that if $e^{ \alpha r}\leq \beta(\tau(\sigma_{2}))^{-1/8}$
then we have
\begin{equation}\label{firstIter}
\|\langle z\rangle^{-3}
\eta(\sigma_{1})\|_{\infty}\lesssim e^{- \alpha
r}\|\langle z\rangle^{-3}
\eta(\sigma_{2})\|_{\infty}.
\end{equation} To prove the claim we compute each term on the right
hand side of \eqref{eqn:106}.
\begin{itemize}
\item[(A)] Notice that $P^{\alpha}\eta(s)=\eta(s)$.  We use
Proposition \ref{propagator} to obtain for $e^{\alpha r}\leq
\beta(\tau(\sigma_{2}))^{-1/8}$ that
\begin{equation}\label{SecTerm} \|\langle
z\rangle^{-3}
U_{\alpha}(\sigma_{1},\sigma_{2})g\|_{\infty}\lesssim
e^{-\alpha r}\|\langle z\rangle^{-3}
g\|_{\infty}.\end{equation}

\item[(B)]
By Lemma \ref{LM:FK} and \eqref{EstD2} we obtain
\begin{equation*}
\begin{array}{lll}
\|\langle z\rangle^{-3}
\int_{\sigma_{2}}^{\sigma_{1}}U_{\alpha}(\sigma_{1},s)E_{1}\eta(s)ds\|_{\infty}
\lesssim
 \int_{\sigma_{2}}^{\sigma_{1}}e^{2\alpha(\sigma_{1}-s)}\beta(\tau(s))^{\frac{1}{2}}\|\langle
z\rangle^{-3} \eta(s)\|ds.
\end{array}
\end{equation*}

Using the condition $e^{\alpha r}\leq \beta(\sigma_{2})^{-1/8}$ and
the relation $\beta(\tau(s))\leq \beta(\tau(\sigma_{2}))$ for $s\geq
\sigma_{2}$ again, we find
\begin{equation}\label{ThirTerm}
\|\langle
z\rangle^{-3}  \int_{\sigma_{2}}^{\sigma_{1}}U_{\alpha}(\sigma_{1},s)E_{1}\eta(s)ds\|_{\infty}\\
\lesssim
 \int_{\sigma_{2}}^{\sigma_{1}}e^{-\alpha(\sigma_{1}-s)}\beta(\tau(s))^{\frac{1}{2}}\|\langle
z\rangle^{-3} \eta(s)\|ds.
\end{equation}
\end{itemize}
Equations \eqref{eqn:106}, \eqref{SecTerm} and \eqref{ThirTerm}
imply for $e^{\alpha r}\leq \beta^{-1/8}(\tau(\sigma_{2}))$ that
(remember that $\eta(\sigma_{2})=g$)
\begin{equation}\label{FinalTerm}
\begin{array}{lll}
\|\langle z\rangle^{-3}
\eta(\tau)\|_{\infty}&\lesssim & e^{-\alpha r}\|\langle
z\rangle^{-3} \eta(\sigma_{2})\|_{\infty}
+
\int_{\sigma_{2}}^{\tau}e^{-\alpha(\tau-s)}\beta(\tau(s))^{\frac{1}{2}}\|\langle
z\rangle^{-3} \eta(s)\|ds.
\end{array}
\end{equation}
Next, we define a function $K(r)$ as
\begin{equation}\label{defineKz}
K(r):=\max_{0\leq s\leq r}e^{\alpha s} \|\langle
z\rangle^{-3} \eta(\sigma_{2}+s)\|.
\end{equation}
Then \eqref{FinalTerm} implies that
$$K(\sigma_{1}-\sigma_{2})\lesssim \|\langle
z\rangle^{-3}
\eta(\sigma_{2})\|_{\infty}+
\int_{\sigma_{2}}^{\sigma_{1}}\beta(\tau(s))^{\frac{1}{2}}ds
K(\sigma_{1}-\sigma_{2}).$$ We observe that
$$ \int_{\sigma_{2}}^{\sigma_{1}}\beta(\tau(s))^{\frac{1}{2}}ds\leq
1/2$$ if $e^{\alpha r}\leq \beta(\tau(\sigma_{2}))^{-1/8}$ and if
$\beta(0)\ll 1$ and, therefore, $\beta(\tau(s))=
\frac{1}{\frac{1}{\beta(0)}+\frac{4p}{(p-1)^{2}} \tau(s)}$ are
small. Thus we have
$$K(\sigma_{1}-\sigma_{2})\lesssim \|\langle
z\rangle^{-3}
\eta(\sigma_{2})\|_{\infty},$$ which together with
Equation \eqref{defineKz} implies \eqref{firstIter}. Writing
$$V_\alpha(\tau,\sigma)=V_\alpha(\sigma_1,\sigma_2)
V_\alpha(\sigma_2,\sigma_3)\cdots
V_\alpha(\sigma_{m-1},\sigma_m)$$ with $\sigma_1=\tau$,
$\sigma_m=\sigma$ and $|\sigma_i-\sigma_{i+1}|=r$ such that
$e^{\alpha r}\leq \beta^{-1/8}(\tau(\sigma_k))\ \forall k$ and iterating
\eqref{firstIter} completes the proof of the theorem.
\begin{flushright}
$\square$
\end{flushright}

\section{Estimate of $M_{1}(\tau)$ (Equation \eqref{M1})}\label{SEC:EstM1}
In this subsection we derive an estimate for $M_{1}(T)$ given in
Equation \eqref{M1}.  Given any time $\tau'$, choose
$T=\tau'$ and pass from the unknown $\xi(y,\tau)$, $\tau\leq T,$
to the new unknown $\eta(z,\sigma),$ $\sigma\leq S,$ given in
\eqref{NewFun}. Now we estimate the latter function. To this end we
use Equation \eqref{eq:eta}. Observe that the function $\eta$ is
not orthogonal to the first three eigenvectors of the operator
$L_{0}$ defined in \eqref{eqn:DefnL0V}. Thus we apply the
projection $P^\alpha$ to Equation \eqref{eq:eta} to get
\begin{equation}\label{EQ:eta2}
\frac{d}{d\sigma}P^{\alpha}\eta=-P^{\alpha}L_{\alpha}P^{\alpha}\eta
+P^{\alpha}\sum_{k=1}^{4}D_{k},
\end{equation} where
we used the fact that $P^\alpha$ are $\tau$-independent and the
functions $D_{k}\equiv D_{k}(\sigma), \ k=1,2,3,4,$ are defined as
$$D_{1}:=-P^{\alpha}V\eta+P^{\alpha}VP^{\alpha}\eta,\ \ \ D_{2}:=W(a,b,\alpha)\eta,$$
$$D_{3}:=F(a,b,\alpha),\ \  D_{4}:=N(\eta,a,b,\alpha),$$ recall the definitions
of the functions $V$, $W$, $F$ and $N$ after \eqref{eqn:DefnL0V}.
\begin{lemma}\label{LM:EstDs} If $A(\tau),B_i(\tau)\leq
\beta^{-\frac{\kappa}{2}}(\tau)$ for $\tau\leq T$ and
$\|b_0\|\ll 1$, then we have
\begin{equation}\label{eq:estD1}
\|\langle z\rangle^{-3}D_{1}(\sigma)\|_{\infty}\lesssim
\beta^{5/2}(\tau(\sigma))M_{1}(T),
\end{equation}
\begin{equation}\label{eq:Remainder3}
\|\langle z\rangle^{-3}D_{2}(\sigma)\|_{\infty}\lesssim{\beta^{2+\frac{\kappa}{2}}(\tau
(\sigma))}M_{1}(T),
\end{equation}
\begin{equation}\label{eq:Festimate2}
\|\lra{z}^{-3} D_{3}(\sigma)\|_\infty
\lesssim\beta^{\min\{5/2,2p\}}(\tau(\sigma))[1+ M_1(T)(1+ A(T))+
M_1^2(T)+ M_1^{p}(T)],
\end{equation}
\begin{equation}\label{nonlinearity3}
\begin{array}{ll}
\|\langle z\rangle^{-3}D_{4}\|_{\infty}&\lesssim
\beta^{2}(\tau(\sigma))M_{1}(T)[\beta^{1/2}(\tau(\sigma))M_{1}(T)+M_{2}(T)\\
&+\beta^{\frac{p-1}{2}}(\tau(\sigma))M_{1}^{p-1}(T)+M_{2}^{p-1}(T)].
\end{array}
\end{equation}
\end{lemma}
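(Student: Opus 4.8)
The plan is to estimate $D_1,\dots,D_4$ separately, in each case pulling the bound back to the fluctuation $\xi$ through the relation \eqref{NewFun}. The preparatory input is Proposition~\ref{NewTrajectory}: on $[0,T]$ we have $\lambda/\lambda_1 = 1+\O{\beta}$, hence $\langle z\rangle\sim\langle y\rangle$ under $z=(\lambda_1/\lambda)y$, and consequently $\|\langle z\rangle^{-3}\eta(\cdot,\sigma)\|_\infty\lesssim\beta^2(\tau(\sigma))M_1(T)$ and $\|\eta(\cdot,\sigma)\|_\infty\lesssim M_2(T)$, directly from the definitions of $M_1,M_2$ and the hypothesis $B(\tau)\le\beta^{-\kappa/2}(\tau)$.

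For $D_1$ the key observation is the identity $D_1=-P^\alpha V\eta+P^\alpha VP^\alpha\eta=-P^\alpha V\bar P^\alpha\eta$, where $\bar P^\alpha:=1-P^\alpha$ projects onto the first three eigenspaces of $L_0$; only that finite-dimensional piece of $\eta$ enters. Since those eigenfunctions are products of Hermite polynomials of total degree $\le 2$, $\bar P^\alpha\eta$ is a polynomial of degree $\le 2$ whose coefficients are weighted $L^2$-pairings of $\eta$ against polynomials and hence $\O{\|\langle z\rangle^{-3}\eta\|_\infty}$; thus $\|\langle z\rangle^{-2}\bar P^\alpha\eta\|_\infty\lesssim\|\langle z\rangle^{-3}\eta\|_\infty$. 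Writing $V=\frac{2p\alpha}{p-1}\,\frac{z\tilde\beta z}{p-1+z\tilde\beta z}$ and using $z\tilde\beta z\le\beta|z|^2$ gives $\|\langle z\rangle^{-1}V\|_\infty\lesssim\beta^{1/2}$ (the supremum is at $|z|\sim\beta^{-1/2}$). Since $P^\alpha=1-\bar P^\alpha$ is bounded on $\langle z\rangle^3L^\infty$, these combine to
\[
\|\langle z\rangle^{-3}D_1\|_\infty\lesssim\|\langle z\rangle^{-1}V\|_\infty\,\|\langle z\rangle^{-2}\bar P^\alpha\eta\|_\infty\lesssim\beta^{1/2}\|\langle z\rangle^{-3}\eta\|_\infty\lesssim\beta^{5/2}M_1(T).
\]

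For $D_2=W(a,b,\alpha)\eta$ I would extract a hidden smallness of $W$. Write $W/p=\frac{\nu_1}{d_1}-\frac{\nu_2}{d_2}$ with $\nu_1=\frac{\lambda^2}{\lambda_1^2}(a+\tfrac12)$, $\nu_2=2\alpha$, $d_1=p-1+\frac{\lambda^2}{\lambda_1^2}zbz$, $d_2=p-1+z\tilde\beta z$, and regroup as $\frac{\nu_1-\nu_2}{d_1}+\nu_2\frac{d_2-d_1}{d_1d_2}$. Here $\nu_1-\nu_2=\O{\beta}$, because $a-\tfrac12=\O{\beta}$ (from $A(\tau)\le\beta^{-\kappa/2}$ and $\Tr b=\O{\beta}$), $\alpha-\tfrac12=\O{\beta}$, and $\lambda/\lambda_1=1+\O{\beta}$; and $d_2-d_1=z(\tilde\beta-b)z+\O{\beta}\,zbz$ with $\|\tilde\beta-b\|\lesssim\beta^{1+\kappa/2}$ from $B(\tau)\le\beta^{-\kappa/2}$. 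Using $d_1,d_2\gtrsim 1+z\tilde\beta z$ to absorb the quadratic growth of the numerators yields $\|W\|_\infty\lesssim\beta+\beta^{\kappa/2}\lesssim\beta^{\kappa/2}$, whence $\|\langle z\rangle^{-3}D_2\|_\infty\le\|W\|_\infty\|\langle z\rangle^{-3}\eta\|_\infty\lesssim\beta^{2+\kappa/2}M_1(T)$.

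The term $D_3=(\lambda/\lambda_1)^{\frac{2p}{p-1}}{\cal F}(a,b,c)$ is immediate: $(\lambda/\lambda_1)^{\frac{2p}{p-1}}=1+\O{\beta}$ and $\langle z\rangle\sim\langle y\rangle$, so \eqref{eq:Festimate2} is the Corollary following Proposition~\ref{Prop:ForcingBounds} (with $M_1,A$ evaluated at $T$, using monotonicity). For $D_4=(\lambda/\lambda_1)^{\frac{2p}{p-1}}{\cal N}\big((\lambda_1/\lambda)^{\frac{2}{p-1}}\eta,b,c\big)$, \eqref{eqn:69a} gives $|D_4|\lesssim|\eta|^2+|\eta|^p$, and I would bound $\|\langle z\rangle^{-3}\eta^q\|_\infty$ for $q=2,p$ by splitting $\R^n$ at $|z|\sim\beta^{-1/2}$: on $\{|z|\le\beta^{-1/2}\}$ write $\langle z\rangle^{-3}|\eta|^q=\langle z\rangle^{3(q-1)}(\langle z\rangle^{-3}|\eta|)^q\lesssim\beta^{-\frac{3(q-1)}{2}}\|\langle z\rangle^{-3}\eta\|_\infty^q$, and on $\{|z|>\beta^{-1/2}\}$ use $\langle z\rangle^{-3}|\eta|^q\le\|\langle z\rangle^{-3}\eta\|_\infty\|\eta\|_\infty^{q-1}$; substituting $\|\langle z\rangle^{-3}\eta\|_\infty\lesssim\beta^2M_1(T)$ and $\|\eta\|_\infty\lesssim M_2(T)$ reproduces exactly the four terms in \eqref{nonlinearity3}. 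I expect the $D_2$ step to be the main obstacle: the smallness of $W$ is not visible term-by-term and must be read off from the near-coincidence of the two ``potentials'', with careful bookkeeping of the denominators so that the quadratic growth in $z$ of the numerators does not spoil the uniform bound.
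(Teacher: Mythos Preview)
Your proof is correct and follows essentially the same approach as the paper's: the same preliminary comparison $\langle z\rangle\sim\langle y\rangle$ and $\|\langle z\rangle^{-3}\eta\|_\infty\lesssim\beta^2 M_1$, the same identity $D_1=-P^\alpha V\bar P^\alpha\eta$ combined with $\|\langle z\rangle^{-1}V\|_\infty\lesssim\beta^{1/2}$ and $\|\langle z\rangle^{-2}\bar P^\alpha\eta\|_\infty\lesssim\|\langle z\rangle^{-3}\eta\|_\infty$, the same term-by-term extraction of smallness $\|W\|_\infty\lesssim\beta^{\kappa/2}$ for $D_2$, and the direct appeal to \eqref{eq:Festimate} for $D_3$. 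The only organizational difference is in $D_4$: the paper first proves the auxiliary bound $\|\xi\|_\infty\lesssim\beta^{1/2}M_1+M_2$ by splitting at $|y|\sim\beta^{-1/2}$ and then factorizes $\langle y\rangle^{-3}|\xi|^q\le\|\langle y\rangle^{-3}\xi\|_\infty\|\xi\|_\infty^{q-1}$, whereas you split $\langle z\rangle^{-3}|\eta|^q$ directly at $|z|\sim\beta^{-1/2}$; the two routes are algebraically equivalent and yield the same four terms.
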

\begin{proof}
In what follows we use the following estimates, implied by
\eqref{eq:appro},
\begin{equation}\label{eq:compare2}
\frac{\lambda_{1}}{\lambda}(t(\tau))-1=O(\beta(\tau)), \
\text{thus}\ \frac{\lambda_{1}}{\lambda}(t(\tau)),\
\frac{\lambda}{\lambda_{1}}(t(\tau))\leq 2,\ \langle
z\rangle^{-3}\lesssim \langle y\rangle^{-3}
\end{equation} where, recall that $z:=\frac{\lambda_{1}}{\lambda}y.$
We start with proving the following two estimates which will be used
frequently below
\begin{equation}\label{Compare0}
\|  \eta(\sigma)\|_{\infty}\lesssim
\beta^{1/2}(\tau(\sigma))M_{1}(\tau(\sigma))+M_{2}(\tau(\sigma))\leq
\beta^{1/2}(\tau(\sigma))M_{1}(T)+M_{2}(T),
\end{equation}
\begin{equation}\label{Compare3}
\|\langle z\rangle^{-3}
\eta(\sigma)\|_{\infty}\lesssim
\beta^{2}(\tau(\sigma))M_{1}(\tau(\sigma))\leq
\beta^{2}(\tau(\sigma))M_{1}(T).
\end{equation}
Denote by $\chi_{\geq D}$ and $\chi_{\leq D}$ the characteristic
functions of the sets $\{|x|\geq D\}$ and $\{|x|\leq D\}:$
\begin{equation}\label{cutoff} \chi_{\geq D}(x):=
\left\{\begin{array}{lll}
1,\, \,  \text{if}\ |x|\geq D\\
0,\, \,  \text{otherwise}
\end{array}\right.
\ \text{and}\ \chi_{\leq D}:=1-\chi_{\geq D}.
\end{equation} We take $D:=\frac{C}{\sqrt{\beta}}$ where
$C$ is a large constant. Writing  $1= 1 - \chi_{\geq D} + \chi_{\geq
D}$ and using the inequality $1-\chi_{\geq D}\lesssim
\beta^{-3/2}(\tau)\langle y\rangle^{-3}$, the relation between $\xi$
and $\eta$, see \eqref{NewFun}, and Estimate \eqref{eq:compare2}
we find
\begin{equation}\label{eq:keyEst}
\begin{array}{lll}
\|\eta(\sigma)\|_{\infty}\lesssim
\|\xi(\tau(\sigma))\|_{\infty}\lesssim
 \beta^{-3/2}(\tau(\sigma))\|\langle
y\rangle^{-3}\xi(\tau(\sigma))\|_{\infty}\\+
\|\chi_{\geq D}\xi(\tau)\|_{\infty}\ \leq
\beta^{1/2}(\tau(\sigma))M_{1}(\tau(\sigma))+M_{2}(\tau(\sigma))
\end{array}
\end{equation}
which is \eqref{Compare0}. Similarly recall that
$z=\frac{\lambda_{1}}{\lambda}y$ which together with \eqref{NewFun}
and \eqref{eq:compare2} yields
$$\|\langle z\rangle^{-3}\eta(\sigma)\|_{\infty}\lesssim
\|\langle
y\rangle^{-3}\xi(\tau(\sigma))\|_{\infty}\lesssim
\beta^{2}(\tau(\sigma))M_{1}(\tau(\sigma))\leq\beta^{2}(\tau(\sigma))M_{1}(T).$$
Thus we have \eqref{Compare3}.

Now we proceed directly to proving the lemma. First we rewrite
$D_{1}$ as
$$D_{1}(\sigma)=-P^{\alpha}\frac{2p\alpha z \tilde{\beta}(\tau(\sigma))z}
{(p-1)(p-1+z \tilde{\beta}(\tau(\sigma))z)}
(1-P^{\alpha})\eta(\sigma).$$ Now, using that $\langle
z\rangle^{-1}\frac{zbz}{p-1+zbz}\lesssim
\|b\|^{1/2}$ and that $\|b\|\lesssim \beta$, we obtain
$$
\begin{array}{lll}
\|\langle z\rangle^{-3}
D_{1}(\sigma)\|_{\infty}&\lesssim& \beta^{1/2}(\tau)|\|
\langle z\rangle^{-2}
(1-P^{\alpha})\eta(\sigma)\|_{\infty}.
\end{array}
$$
Next, because of the explicit form of $\bar{P}^{\alpha}:=1-P^{\alpha}$,
i.e. $\bar{P}^{\alpha}=\displaystyle|\phi_{0,\alpha} \rangle \langle
\phi_{0,\alpha}|+\sum_{i=1}^n|\phi_{1,\alpha}^{(i)} \rangle \langle
\phi_{1,\alpha}^{(i)}|+\sum_{i=1}^n|\phi_{2,\alpha}^{(i)} \rangle
\langle \phi_{2,\alpha}^{(i)}|+\sum_{1\leq i\neq j\leq
n}|\phi_{2,\alpha}^{(ij)} \rangle \langle \phi_{2,\alpha}^{(ij)}|,$
where $\phi_{m,\alpha}$ are the normalized eigenfunctions of the
operator $L_0:=-\Delta_z+\alpha z\cdot\p_z-2\alpha$, we have for any
function $g$
\begin{equation}\label{eq:estPro} \|\langle
z\rangle^{-2}\bar{P}^{\alpha}
g\|_{\infty}\lesssim \|\langle z\rangle^{-3}
g\|_{\infty}.
\end{equation} Collecting
the estimates above and using \eqref{Compare3}, we arrive at
$$\|\langle z\rangle^{-3}
D_{1}(\sigma)\|_{\infty}\lesssim
\beta^{1/2}(\tau(\sigma))\|\langle z\rangle^{-3}
\eta(\sigma)\|_{\infty} \lesssim
\beta^{5/2}(\tau(\sigma))M_{1}(T).$$

To prove \eqref{eq:Remainder3} we recall the definition of $D_{2}$
and rewrite it as
\begin{align*}
D_{2}=&\{[\frac{\lambda^{2}}{\lambda_{1}^{2}}-1]\frac{p(a+\frac{1}{2})}{p-1+yby}
+\frac{p(a-\alpha)}{p-1+yby}
+\frac{p(\frac{\lambda_{1}^{2}}{\lambda^{2}}-1)yby}
{(p-1+zbz)(p-1+yby)}  \\
&+\frac{p(\alpha-\frac{1}{2})}{p-1+yby}
-\frac{2p(\alpha-\frac{1}{2})}{p-1+z\tilde{\beta}z}+\frac{p z(\tilde{\beta}-b)z}{(p-1+zbz)(p-1+z\tilde{\beta}z)}\}\eta.
\end{align*}
Then Equations \eqref{eq:appro}, \eqref{CauchA} and the definition
of $B$ in \eqref{majorants} imply
$$\|\langle z\rangle^{-3}D_{2}(\sigma)\|_{\infty}\leq
\beta^{\frac{\kappa}{2}}(\tau(\sigma))\|\langle
z\rangle^{-3}\eta(\sigma)\|_{\infty}.$$
Using \eqref{Compare3} we obtain \eqref{eq:Remainder3} (recall
$\kappa:=\min\{\frac{1}{2},\frac{p-1}{2}\}$).

Now we prove \eqref{eq:Festimate2}. By \eqref{eq:compare2} and the
relation between $D_{3}$, $F$ and $\mathcal{F}$ we have
$$\|\langle z\rangle^{-3} D_{3}(\sigma)\|_\infty \lesssim
\|\langle y\rangle^{-3}
\mathcal{F}(a,b,c)(\tau(\sigma))\|_\infty$$ which together with
\eqref{eq:Festimate} implies \eqref{eq:Festimate2}.

Lastly we prove \eqref{nonlinearity3}. By the relation between
$D_{4}$, $N$ and $\mathcal{N}$ and the estimate in \eqref{eqn:69a}
we have
$$
\begin{array}{lll}
\|\langle z\rangle^{-3}D_{4}(\sigma)\|_{\infty}&\lesssim &\|\langle
y\rangle^{-3}\mathcal{N}(\xi(\tau(\sigma)),b(\tau(\sigma)),c(\tau(\sigma)))\|_{\infty}\\
&\lesssim &\|\langle y\rangle^{-3}\xi(\tau(\sigma))\|_{\infty}[\xi(\tau(\sigma))\|_{\infty}+\xi(\tau(\sigma))\|_{\infty}^{p-1}].
\end{array}
$$ Using \eqref{eq:keyEst} and the definition of $M_{1}$ we complete
the proof.
\end{proof}

Below we will need the following lemma. Recall that
$S:=\sigma(t(T))$.
\begin{lemma}\label{Bridge}  If $A(\tau)\leq
\beta^{-\frac{\kappa}{2}}(\tau)$, then for any $c_{1},c_{2}>0$ there
exists a constant $c(c_{1},c_{2})$ such that
\begin{equation}\label{INT}
 \int_{0}^{S}e^{-c_{1}(S-\sigma)}\beta^{c_{2}}(\tau(t(\sigma)))d\sigma\leq
c(c_{1},c_{2})\beta^{c_{2}}(T).
\end{equation}
\end{lemma}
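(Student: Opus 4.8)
The plan is to push the integral back to the original rescaled time variable $\tau$ and then use the explicit formula for $\beta$.

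First I would invoke Proposition \ref{NewTrajectory} (whose hypotheses hold, since $A(\tau)\le\beta^{-\frac{\kappa}{2}}(\tau)$ and $\beta(0)\ll1$): it gives $|\tfrac{\lambda}{\lambda_1}(t(\tau))-1|\lesssim\beta(\tau)\le\beta(0)\ll1$, hence $\tfrac14\le\lambda_1^2(t)/\lambda^2(t)\le4$ for all physical times $t\in[0,t(T)]$. Regarding $\sigma$ as a function of $\tau$ through $\sigma=\sigma(t(\tau))$, we have $\tfrac{d\sigma}{d\tau}=\lambda_1^2(t(\tau))/\lambda^2(t(\tau))\in[\tfrac14,4]$, with $\sigma=0\leftrightarrow\tau=0$ and $\sigma=S\leftrightarrow\tau=T$. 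Consequently $S-\sigma=\int_\tau^T\tfrac{\lambda_1^2}{\lambda^2}\,d\tau'\ge\tfrac14(T-\tau)$, so that $e^{-c_1(S-\sigma)}\le e^{-\frac{c_1}{4}(T-\tau)}$, and, since the integrand is nonnegative,
\[
\int_0^S e^{-c_1(S-\sigma)}\beta^{c_2}(\tau(t(\sigma)))\,d\sigma
\le 4\int_0^T e^{-\frac{c_1}{4}(T-\tau)}\beta^{c_2}(\tau)\,d\tau .
\]

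Next I would use the explicit expression $\beta(\tau)=\bigl(\beta(0)^{-1}+\tfrac{4p}{(p-1)^2}\tau\bigr)^{-1}$ for the largest eigenvalue of $\tilde\beta(\tau)$ from \eqref{FunBTau}. For $0\le\tau\le T$ this yields
\[
\frac{\beta(\tau)}{\beta(T)}
=\frac{\beta(0)^{-1}+\tfrac{4p}{(p-1)^2}T}{\beta(0)^{-1}+\tfrac{4p}{(p-1)^2}\tau}
=1+\frac{\tfrac{4p}{(p-1)^2}(T-\tau)}{\beta(0)^{-1}+\tfrac{4p}{(p-1)^2}\tau}
\le 1+C_*(T-\tau),\qquad C_*:=\tfrac{4p}{(p-1)^2}\beta(0),
\]
so $\beta^{c_2}(\tau)\le\beta^{c_2}(T)\bigl(1+C_*(T-\tau)\bigr)^{c_2}$. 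Substituting $s=T-\tau$ and enlarging the range of integration,
\[
\int_0^T e^{-\frac{c_1}{4}(T-\tau)}\beta^{c_2}(\tau)\,d\tau
\le \beta^{c_2}(T)\int_0^\infty e^{-\frac{c_1}{4}s}\bigl(1+C_*s\bigr)^{c_2}\,ds,
\]
the last integral being finite for every $c_1,c_2>0$ because the polynomial factor is dominated by the exponential. Combining this with the previous display proves \eqref{INT} with $c(c_1,c_2)=4\int_0^\infty e^{-\frac{c_1}{4}s}(1+C_*s)^{c_2}\,ds$, a constant independent of $T$ (note $C_*\le\tfrac{4p}{(p-1)^2}$).

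The only delicate point is the change of variables in the first step: one must check that the comparison between $\sigma$ and $\tau$ — equivalently the bounds on $\lambda_1^2/\lambda^2$ — holds with constants uniform in $T$, which is exactly the content of Proposition \ref{NewTrajectory}. Everything else is elementary; in particular the (possibly non-integer) exponent $c_2$ causes no trouble, since $e^{-\frac{c_1}{4}s}(1+C_*s)^{c_2}$ remains integrable on $[0,\infty)$.
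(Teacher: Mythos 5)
Your proof is correct and takes essentially the same approach as the paper's: both invoke Proposition \ref{NewTrajectory} to get the two-sided comparison $\tfrac14\le\lambda_1^2/\lambda^2\le 4$ (hence $\sigma\sim\tau$ and $S\sim T$ up to factors of $4$) and then use the explicit algebraic decay of $\beta$ to bound the exponentially weighted integral. The only cosmetic difference is that you change variables $\sigma\to\tau$ and make the bound $\beta(\tau)\le\beta(T)\bigl(1+C_*(T-\tau)\bigr)$ explicit, whereas the paper stays in the $\sigma$ variable, substitutes $\beta(\tau(\sigma))\lesssim(\beta(0)^{-1}+\sigma)^{-1}$, and leaves the resulting integral bound as a ``direct computation.''
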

\begin{proof}
We use the shorthand $\tau(\sigma)\equiv \tau(t(\sigma)),$ where,
recall, $t(\sigma)$ is the inverse of $\sigma(t)=
\int_{0}^{t}\lambda_{1}^{2}(k)dk$ and $\tau(t)=
\int_{0}^{t}\lambda^{2}(k)dk.$ By Proposition \ref{NewTrajectory}
we have that $\frac{1}{2}\leq \frac{\lambda}{\lambda_{1}}\leq 2$
provided that $A(\tau)\leq \beta^{-\frac{\kappa}{2}}(\tau)$. Hence
\begin{equation}\label{TauS1}
\frac{1}{4}\sigma\leq \tau(\sigma)\leq 4\sigma
\end{equation} which
implies
$\frac{1}{\frac{1}{\beta(0)}+\frac{4p}{(p-1)^{2}}\tau(\sigma)}\lesssim
\frac{1}{\frac{1}{\beta(0)}+\sigma}$.  By a direct computation we have
\begin{equation}\label{InI2}
 \int_{0}^{S}e^{-c_{1}(S-\sigma)}\beta^{c_{2}}(\tau(\sigma))d\sigma\leq
c(c_{1},c_{2})\frac{1}{(\frac{1}{\beta(0)}+\frac{4p}{p-1}S)^{c_{2}}}.
\end{equation}
Using \eqref{TauS1} again we obtain $4S\geq \tau(S)=T\geq \frac{1}{4}S$
which together with \eqref{InI2} implies \eqref{INT}.
\end{proof}

Recall that $V_{\alpha}(t,s)$ is the propagator generated by
the operator $-P^{\alpha} L_{\alpha}P^{\alpha}$. To estimate the
function $P^{\alpha}\eta$ we rewrite Equation \eqref{EQ:eta2} as
$$
P^{\alpha}\eta(S)=V_{\alpha}(S,0)P^{\alpha}\eta(0)+\displaystyle\sum_{n=1}^{4}
\int_{0}^{S}V_{\alpha}(S,\sigma)P^{\alpha}D_{n}(\sigma)d\sigma
$$ which implies
\begin{equation}\label{eq:estEsta}
\|\langle z\rangle^{-3}P^{\alpha}\eta(S)\|_{\infty}\leq K_{1}+K_{2}
\end{equation} with $$K_{1}:=\|\langle
z\rangle^{-3}V_{\alpha}(S,0)P^{\alpha}\eta(0)\|_{\infty};$$
$$K_{2}:=\|\langle z\rangle^{-3}\displaystyle\sum_{n=1}^{4}
\int_{0}^{S}V_{\alpha}(S,\sigma)P^{\alpha}D_{n}(\sigma)d\sigma\|_{\infty}.$$

Using Theorem \ref{ProP}, equation \eqref{Compare3} and the
slow decay of $\beta(\tau)$ we obtain
\begin{equation}
K_{1}\lesssim e^{-\cO S}\|\langle z\rangle^{-3}
P^{\alpha}\eta(0)\|_{\infty}\lesssim e^{-\cO S}\|\langle
z\rangle^{-3}\eta(0)\|_{\infty}\lesssim
\beta^{2}(T)M_{1}(0).
\end{equation}

By Theorem \ref{ProP}, equations \eqref{eq:estD1}-\eqref{nonlinearity3} and $
\int_{0}^{S}e^{-\cO(S-\sigma)}\beta^{2}(\tau(\sigma))d\sigma\lesssim
\beta^{2}(T)$ (see Lemma \ref{Bridge}) we have
\begin{equation}\label{M1Ge}
\begin{array}{lll}
K_{2}&\lesssim &\beta^{2}(T)
\{\beta^{\frac{\kappa}{2}}(0)[1+M_{1}(T)A(T)+M_{1}^{2}(T)+M_{1}^{p}(T)]\\
&&+[M_{2}(T)M_{1}(T)+M_{1}(T)M_{2}^{p-1}(T)]\}.
\end{array}
\end{equation}

Equation \eqref{NewFun} and the definitions of $S$ and $T$ imply
that $\lambda_{1}(t(S))=\lambda(t(T))$, $z=y$, $\eta(S)=\xi(T)$, and
$P^{\alpha}\xi=\xi$, consequently
\begin{equation}\label{eq:ini3}
\|\langle z\rangle^{-3}
P^{\alpha}\eta(S)\|_{\infty}=\|\langle
y\rangle^{-3}\xi(T)\|_{\infty}.
\end{equation}

Collecting the estimates \eqref{eq:estEsta}-\eqref{eq:ini3} and
using the definition of $M_{1}$ in \eqref{majorants} we have
$$
\begin{array}{lll}
M_{1}(T)&:=&\displaystyle\sup_{\tau\leq T}\beta^{-2}(\tau)\|\langle
y\rangle^{-3}\xi(\tau)\|_{\infty}\\
&\lesssim & M_{1}(0)+\beta^{\frac{\kappa}{2}}(0)[1+M_{1}(T)A(T)+M_{1}^{2}(T)+M_{1}^{p}(T)]\\
&&+M_{2}(T)M_{1}(T)+M_{1}(T)M_{2}^{p-1}(T)
\end{array}
$$
which together with the fact that $T$ is arbitrary implies Equation
\eqref{M1}.
\begin{flushright}
$\square$
\end{flushright}

\section{Estimate of $M_2$ (Equation \eqref{M2})}\label{SEC:EstM2}
The following lemma is proven similarly to the corresponding parts
of Lemma \ref{LM:EstDs} and therefore it is presented without a
proof.
\begin{lemma}\label{LM:keyest}
If $A(\tau), B(\tau)\leq \beta^{-\frac{\kappa}{2}}(\tau)$ and
$b_0\ll 1$ and $D_{k}(\sigma)$, $k=2,3,4$, are the same as in Lemma
\ref{LM:EstDs}, then
\begin{equation}\label{eq:Remainder0}
\|D_{2}(\sigma)\|_{\infty}\lesssim{\beta^{\frac{\kappa}{2}}(\tau
(\sigma))}[\beta^{1/2}(\tau(\sigma))M_{1}(T)+M_{2}(T)];
\end{equation}
\begin{equation}\label{eq:estF0}
\|D_{3}(\sigma)\|_\infty\lesssim
\beta^{\min\{1,2p-1\}}(\tau(\sigma))[1+ M_1(T)(1+ A(T))+ M_1^2(T)+
M_1^{p}(T)];
\end{equation}
\begin{equation}\label{nonlinearity0}
\|D_{4}(\sigma)\|_{\infty}\lesssim
\beta(\tau(\sigma))M_{1}^{2}(T)+M_{2}^{2}(T)+\beta^{p/2}(\tau(\sigma))M_{1}^{p}(T)+M_{2}^{p}(T).
\end{equation}
\end{lemma}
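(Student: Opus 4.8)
The plan is to follow the proof of Lemma \ref{LM:EstDs} line by line, dropping the weight $\langle z\rangle^{-3}$ throughout and, correspondingly, replacing every use of the weighted bound \eqref{Compare3} on $\eta$ by the unweighted bound \eqref{Compare0}. Indeed \eqref{eq:Remainder0}, \eqref{eq:estF0} and \eqref{nonlinearity0} are precisely the unweighted counterparts of \eqref{eq:Remainder3}, \eqref{eq:Festimate2} and \eqref{nonlinearity3}, so no new idea is needed; only the bookkeeping changes.

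For $D_2$, I would reuse the explicit decomposition of $W(a,b,\alpha)\eta$ written out in the proof of \eqref{eq:Remainder3}: each of the bracketed coefficients multiplying $\eta$ is $\O{\beta^{\kappa/2}(\tau(\sigma))}$ by \eqref{eq:appro}, \eqref{CauchA} and the definition of $B$ in \eqref{majorants}. Hence $\|D_2(\sigma)\|_\infty \lesssim \beta^{\kappa/2}(\tau(\sigma))\,\|\eta(\sigma)\|_\infty$, and inserting \eqref{Compare0} gives \eqref{eq:Remainder0}.

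For $D_3 = F(a,b,\alpha) = (\lambda/\lambda_1)^{2p/(p-1)}\mathcal{F}(a,b,c)$, I would use $\lambda/\lambda_1 \le 2$ from \eqref{eq:compare2} to reduce to $\|D_3(\sigma)\|_\infty \lesssim \|\mathcal{F}(a,b,c)(\tau(\sigma))\|_\infty$, and then apply the $m=0$ case of \eqref{eq:Festimate} (so that the relevant exponent is $k_0 = \min\{1,2p-1\}$), which yields \eqref{eq:estF0}. For $D_4 = N(\eta,a,b,\alpha)$, note that $(\lambda_1/\lambda)^{2/(p-1)}\eta = \xi$ by \eqref{NewFun}, so the pointwise bound \eqref{eqn:69a} together with $\lambda/\lambda_1 \le 2$ gives $\|D_4(\sigma)\|_\infty \lesssim \|\xi(\tau(\sigma))\|_\infty^2 + \|\xi(\tau(\sigma))\|_\infty^p$; then \eqref{eq:keyEst} bounds $\|\xi(\tau(\sigma))\|_\infty$ by $\beta^{1/2}(\tau(\sigma))M_1(T) + M_2(T)$ (using that $M_1, M_2$ are nondecreasing), and expanding the square and the $p$-th power, via $(x+y)^p \lesssim x^p + y^p$ for $x,y \ge 0$, produces the four terms displayed in \eqref{nonlinearity0}.

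I do not expect a genuine obstacle here, since the statement is a routine variant of Lemma \ref{LM:EstDs}. The one point needing care is the bookkeeping of powers of $\beta$: in the unweighted norm the bound on $\xi$ (and $\eta$) carries the term $M_2(T)$ with no factor of $\beta$ in front, which is exactly why \eqref{nonlinearity0} must retain the $\beta$-free terms $M_2^2(T)$ and $M_2^p(T)$. These are precisely the terms that later drive the bootstrap inequality \eqref{M2} for $M_2(\tau)$, so one must be careful not to absorb them into $\beta$-weighted terms through over-optimistic estimation.
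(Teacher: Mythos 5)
Your proposal is correct and is precisely the argument the paper intends: the paper omits the proof with the remark that it ``is proven similarly to the corresponding parts of Lemma \ref{LM:EstDs},'' and your line-by-line transcription — dropping the weight $\langle z\rangle^{-3}$, replacing \eqref{Compare3} by \eqref{Compare0}, using the $m=0$ case of \eqref{eq:Festimate} for $D_3$, and expanding $(\beta^{1/2}M_1 + M_2)^2$ and $(\beta^{1/2}M_1 + M_2)^p$ for $D_4$ — is exactly that argument. Your closing caution about not absorbing the $\beta$-free $M_2^2$ and $M_2^p$ terms is also well placed, since those are the terms that survive into \eqref{M2} and drive the $M_2$-bootstrap.
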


To estimate $M_2$ it is convenient to treat the $z$-dependent part
of the potential in \eqref{eqn:DefnL0V} as a perturbation.  Let the
operator $L_0$ be the same as in \eqref{eq:eta}. Rewrite
\eqref{eq:eta} to have
\begin{equation}
\eta(S)=e^{-(L_{0}+\frac{2p\alpha}{p-1}) S}\eta(0)+
\int_{0}^{S}e^{-(L_{0}+\frac{2p\alpha}{p-1})(S-\sigma)}(V_{2}\eta(\sigma)
+\displaystyle\sum_{k=2}^{4}D_{k}(\sigma))d\sigma,
\end{equation} where, recall $S:=\sigma(t(T)),$ $V_{2}$ is the operator given by
$$V_{2}:=\frac{2p\alpha}{p-1+z\tilde{\beta}(\tau(\sigma))z},$$ and the terms
$D_{n},\ n=2,3,4,$ are the same as in \eqref{EQ:eta2}. Lemma \ref{kernelEst}
implies that
$$\|e^{-(L_{0}+\frac{2p\alpha}{p-1}) s}g\|_{\infty}
=e^{-\frac{2p\alpha}{p-1}s}\|e^{-L_{0}s}g\|_{\infty} \lesssim
e^{-\frac{2\alpha}{p-1}s}\|g\|_{\infty}$$ for any function $g$ and time $s\geq 0.$
Hence we have
\begin{equation}\label{K123s}
\begin{array}{lll}
\|\eta(S)\|_{\infty}\lesssim
 K_{0}+K_{1}+K_{2}
\end{array}
\end{equation} where the functions $K_{i}$ are given by
$$K_{0}:=e^{-\frac{2\alpha}{p-1} S}\|
\eta(0)\|_{\infty};$$
$$K_{1}:= \int_{0}^{S}e^{-\frac{2\alpha}{p-1}(S-\sigma)}
\|V_{2}\eta(\sigma)\|_{\infty}d\sigma,$$
$$K_{2}:=\sum_{n=2}^{4} \int_{0}^{S}e^{-\frac{2\alpha}{p-1}(S-\sigma)}
\|D_{n}\|_{\infty}d\sigma.$$\\
We estimate the $K_{n}$'s, $n=0,1,2.$
\begin{itemize}
\item[(K0)] We start with $K_{0}$.
By \eqref{Compare0} and the decay of $e^{-\frac{2\alpha}{p-1} S}$ we have
\begin{equation}\label{eq:estK0}
K_{0}\lesssim M_{2}(0)+\beta^{1/2}(0)M_{1}(0).
\end{equation}
\item[(K1)] By the definition of $V_{2}$ we have
$$\|V_{2}\eta(\sigma)\|_{\infty}\lesssim
\|\frac{1}{\beta(\tau(\sigma))}\langle z\rangle^{-2}
\eta(\sigma)\|_{\infty}.$$ Moreover by the relation
between $\xi$ and $\eta$ in Equation \eqref{NewFun} and Proposition
\ref{NewTrajectory} we have
$$
\begin{array}{lll}
\displaystyle\max_{0\leq \sigma\leq S} \|
V_{2}\eta(\sigma)\|_{\infty} &\lesssim
\displaystyle\max_{0\leq \tau\leq T}\|\frac{1}{\beta} \langle
y\rangle^{-2}\xi(\tau)\|_{\infty}\\
&\leq \displaystyle\max_{0\leq \tau\leq T}\frac{1}{\beta}(\| \langle
y\rangle^{-3}\xi(\tau)\|_{\infty})^{\frac{2}{3}}
(\xi(\tau)\|_{\infty})^{\frac{1}{3}}\\
&\leq\beta^{\frac{1}{3}}(0)M_1^{\frac{2}{3}}(T)M_2^{\frac{1}{3}}(T).
\end{array}
$$
Therefore we obtain
\begin{equation}\label{EstK1}
\begin{array}{lll}
K_{1}&\lesssim & \displaystyle\max_{0\leq \sigma\leq
S}\|V_{2}\eta(\sigma)\|_{\infty}
\int_{0}^{S}e^{-\frac{2\alpha}{p-1}(S-\sigma)}d\sigma\\
& \lesssim &
\beta^{\frac{1}{3}}(0)M_1^{\frac{2}{3}}(T)M_2^{\frac{1}{3}}(T).
\end{array}
\end{equation}
\item[(K2)] By the definitions of $D_{k}, \ k=2,3,4,$ and
Equations \eqref{eq:Remainder0}-\eqref{nonlinearity0} we have
\begin{equation*}
\begin{array}{lll}
\sum_{k=2}^{4}\|D_{k}(\sigma)\|_\infty & \lesssim &
\beta^{\frac{\kappa}{2}}(\tau(\sigma))[1+M_{2}(T)+M_{1}(T)A(T)
+M_{1}^{2}(T)+M_{1}^{p}(T)]\\
&&+M_{2}^{2}(T)+M_{2}^{p}(T)
\end{array}
\end{equation*}
and consequently
\begin{equation}\label{EstK2}
\begin{array}{lll}
K_{2} & \lesssim &
\beta^{\frac{\kappa}{2}}(0)[1+M_{2}(T)+M_{1}(T)A(T)+M_{1}^{2}(T)
+M_{1}^{p}(T)]\\
&&+M_{2}^{2}(T)+M_{2}^{p}(T).
\end{array}
\end{equation}
\end{itemize}
Collecting the estimates \eqref{K123s}-\eqref{EstK2} we have
\begin{equation}\label{FinalStep}
\begin{array}{lll}
\|\eta(S)\|_{\infty} &\lesssim&
M_{2}(0)+\beta^{1/2}(0)M_{1}(0)
+\beta^{\frac{1}{3}}(0)M_1^{\frac{2}{3}}(T)M_2^{\frac{1}{3}}(T)\\
& &+\beta^{\frac{\kappa}{2}}(0)[1+M_{2}(T)+M_{1}(T)A(T)+M_{1}^{2}(T)
+M_{1}^{p}(T)]+M_{2}^{2}(T)+M_{2}^{p}(T).
\end{array}
\end{equation}
The relation between $\xi$ and $\eta$ in Equation \eqref{NewFun}
implies
\begin{equation*}
\|\xi(T)\|_{\infty} =\|
\eta(S)\|_{\infty}
\end{equation*}
which together with \eqref{FinalStep} gives
$$
\begin{array}{lll}
M_{2}(T)&\lesssim & M_{2}(0)+\beta^{1/2}(0)M_{1}(0)
+\beta^{\frac{1}{3}}(0)M_1^{\frac{2}{3}}(T)M_2^{\frac{1}{3}}(T)
+M_{2}^{2}(T)+M_{2}^{p}(T)\\
& &+\beta^{\frac{\kappa}{2}}(0)[1+M_{2}(T)+M_{1}(T)A(T)+M_{1}^{2}(T)+M_{1}^{p}(T)].
\end{array}
$$
Since $T$ is an arbitrary time, the proof of the estimate \eqref{M2}
for $M_2$ is complete.


\section*{Appendix 1: Feynmann-Kac Formula}\label{Sec:Trotter}
In this appendix we present, for the reader's convenience, a proof
of the Feynmann-Kac formula $U(x,y)=U_0(x,y)E(x,y)$ and the estimate
\eqref{EstimatePotential} used in section \ref{Section:PropEst} (cf.
\cite{BrKu, DGSW}). For stochastic calculus proofs of similar formulae see
\cite{Du, GlJa, Hida, KaSh, Simon}.

Let $L_{0}:=-\Delta_y+\frac{\alpha^{2}}{4}|y|^{2}-\frac{\alpha}{2}$
and $L:=L_{0}+V$ where $V$ is a multiplication operator by a
function $V(y,\tau)$, which is bounded and Lipschitz continuous in
$\tau$. Let $U(\tau,\sigma)$ and $U_{0}(\tau,\sigma)$ be the
propagators generated by the operators $-L$ and $-L_{0},$
respectively. The integral kernels of these operators will be
denoted by $U(\tau,\sigma)(x,y)$ and $U_0(\tau,\sigma)(x,y)$.
\begin{thm}\label{THM:trotter}
The integral kernel of $U(\tau,\sigma)$ can be represented as
\begin{equation}
U(\tau,\sigma)(x,y)=U_{0}(\tau,\sigma)(x,y) \int e^{
\int_{\sigma}^{\tau}V(\omega_{0}(s)+\omega(s),s)ds}d\mu(\omega)
\label{eqn:BNeg1}\end{equation}
where $d\mu(\omega)$ is a
probability measure (more precisely, a conditional harmonic
oscillator, or Ornstein-Uhlenbeck, probability measure) on the
continuous paths $\omega: [\sigma,\tau]\rightarrow\R^n$ with
$\omega(\sigma)=\omega(\tau)=0$, and $\omega_{0}(\cdot)$ is the path
defined as
\begin{equation}
\omega_{0}(s)=e^{\alpha (\tau-s)}\frac{e^{2\alpha\sigma}-e^{2\alpha
s}}{e^{2\alpha\sigma}-e^{2\alpha \tau}}x+e^{\alpha
(\sigma-s)}\frac{e^{2\alpha\tau}-e^{2\alpha s}}{e^{2\alpha
\tau}-e^{2\alpha\sigma}}y. \label{eqn:B0}
\end{equation}
\end{thm}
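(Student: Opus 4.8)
The plan is to read off \eqref{eqn:BNeg1} from the Duhamel (Dyson) expansion of the perturbed propagator $U(\tau,\sigma)$ around the free propagator $U_0(\tau,\sigma)$, using that $U_0$ is explicit. First I would recall the Mehler formula: since $\alpha$ is constant here, $U_0(\tau,\sigma)=e^{-(\tau-\sigma)L_0}$ with $L_0=-\Delta_y+\frac{\alpha^2}{4}|y|^2-\frac{\alpha}{2}$ has a strictly positive Gaussian integral kernel $U_0(\tau,\sigma)(x,y)$, it obeys the Chapman--Kolmogorov identity $\int U_0(\tau,s)(x,\zeta)U_0(s,\sigma)(\zeta,y)\,d\zeta=U_0(\tau,\sigma)(x,y)$, and the path $\omega_0$ of \eqref{eqn:B0} is exactly the solution of the ``classical'' (imaginary-time) equation $\ddot\omega=\alpha^2\omega$ with $\omega_0(\sigma)=y$, $\omega_0(\tau)=x$. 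The Gaussian reproducing structure of $U_0$ then says precisely that, at ordered intermediate times $\sigma<s_1<\cdots<s_k<\tau$, the product $U_0(\tau,s_k)(x,\zeta_k)\cdots U_0(s_1,\sigma)(\zeta_1,y)$ equals $U_0(\tau,\sigma)(x,y)$ times the finite-dimensional density, in the variables $\zeta_i-\omega_0(s_i)$, of the zero-pinned Ornstein--Uhlenbeck process whose law is the measure $d\mu(\omega)$ in \eqref{eqn:BNeg1}; the OU bridge from $y$ at time $\sigma$ to $x$ at time $\tau$ is thus $\omega_0+\omega$.

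With this in hand, since $V$ is bounded I would iterate the Duhamel identity $U(\tau,\sigma)=U_0(\tau,\sigma)-\int_\sigma^\tau U_0(\tau,s)V(s)U(s,\sigma)\,ds$; the resulting Dyson series
\begin{equation*}
U(\tau,\sigma)=\sum_{k\ge 0}(-1)^k\int_{\sigma\le s_1\le\cdots\le s_k\le\tau}U_0(\tau,s_k)V(s_k)\cdots V(s_1)U_0(s_1,\sigma)\,ds_1\cdots ds_k
\end{equation*}
converges in operator norm, uniformly for $\sigma,\tau$ in a bounded interval. Passing to integral kernels and inserting the Chapman--Kolmogorov factorization, the $k$-th term becomes $U_0(\tau,\sigma)(x,y)$ times $\frac{(-1)^k}{k!}$ times the $\mu$-expectation of $\big(\int_\sigma^\tau V(\omega_0(s)+\omega(s),s)\,ds\big)^k$: one first gets a time-ordered Riemann sum $\sum_i(s_{i+1}-s_i)V(\omega_0(s_i)+\omega(s_i),s_i)$ and passes to the integral using almost-sure continuity of $\omega$ together with boundedness and Lipschitz-in-time continuity of $V$. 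Summing over $k$ (justified by dominated convergence, since the integrand is bounded) yields $U(\tau,\sigma)(x,y)=U_0(\tau,\sigma)(x,y)\int e^{-\int_\sigma^\tau V(\omega_0(s)+\omega(s),s)\,ds}\,d\mu(\omega)$, which is \eqref{eqn:BNeg1}. An alternative is the Trotter product formula $e^{-t(L_0+V)}=\lim_n(e^{-\frac{t}{n}L_0}e^{-\frac{t}{n}V})^n$, identifying the limiting kernel with the same path integral; I would use whichever is cleaner to present.

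For the estimate \eqref{EstimatePotential} I would note that in \eqref{FK2} only $\omega_0$ depends on $y$, and from \eqref{eqn:B0},
\begin{equation*}
\partial_y\omega_0(s)=e^{\alpha(\sigma-s)}\,\frac{e^{2\alpha\tau}-e^{2\alpha s}}{e^{2\alpha\tau}-e^{2\alpha\sigma}},
\end{equation*}
which for $\sigma\le s\le\tau$ (with $\alpha>0$) is a factor in $[0,1]$ times $e^{\alpha(\sigma-s)}\le 1$, so $|\partial_y\omega_0(s)|\le 1$. Differentiating $E(x,y)$ under the expectation and using $0<E\le 1$ --- valid since the $V$ of Section \ref{Section:PropEst}, namely $V=\frac{2p\alpha}{p-1}-\frac{2p\alpha}{p-1+z\tilde{\beta}z}$, is nonnegative --- gives $|\partial_yE(x,y)|\lesssim\int_\sigma^\tau\|\nabla_z V(\cdot,s)\|_\infty\,ds$. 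Since $\nabla_z V=\frac{4p\alpha\,\tilde{\beta}z}{(p-1+z\tilde{\beta}z)^2}$ and $\|\tilde{\beta}\|=\beta$, the elementary bounds $\frac{|\tilde{\beta}z|}{p-1+z\tilde{\beta}z}\lesssim\|\tilde{\beta}\|^{1/2}=\beta^{1/2}$ and $(p-1+z\tilde{\beta}z)^{-1}\lesssim 1$ give $\|\nabla_z V\|_\infty\lesssim\beta^{1/2}$, and integrating over the interval of length $r=\tau-\sigma$ gives $|\partial_yE(x,y)|\lesssim r\beta^{1/2}$.

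I expect the main obstacle to be the probabilistic identification in the second paragraph: constructing the pinned OU measure $\mu$ (via Kolmogorov extension from the Gaussian marginals supplied by Chapman--Kolmogorov, or by quoting the standard OU-bridge construction) and rigorously justifying the passage from the time-ordered Riemann sums to $\int_\sigma^\tau V\,ds$ inside the $\mu$-expectation. The rest --- the Mehler formula, operator-norm convergence of the Dyson series for $V\in L^\infty$, and the gradient bound --- is routine bookkeeping.
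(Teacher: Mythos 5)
Your route is mathematically sound but different from the paper's. The paper does not expand a Dyson series; it proves a time-dependent Kato--Trotter formula $U(\tau,0)\xi=\lim_m\prod_k U_0(\tfrac{(k+1)\tau}{m},\tfrac{k\tau}{m})\,e^{\int V\,ds}\xi$ (with the Lipschitz-in-time continuity of $V$ used to control the commutator-type error $\mathcal K(\sigma,\delta)$ in operator norm), then writes down the explicit finite-dimensional kernel $G_m(x,y)$ of the $m$-step approximant, factors out $U_\tau(x,y)$, identifies the remaining finite-dimensional Gaussian integral as the cylindrical-set approximation to the conditional OU measure $d\mu_{xy}$, and passes to the limit. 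Your Dyson-series route instead matches each order $k$ with the $k$-th moment $\tfrac{(-1)^k}{k!}\,\mathbb E_\mu\!\big[(\int_\sigma^\tau V\,ds)^k\big]$ and sums; both approaches hinge on the same Chapman--Kolmogorov identity, i.e.\ the fact that $\prod_i U_0(s_{i+1},s_i)(\zeta_{i+1},\zeta_i)=U_0(\tau,\sigma)(x,y)\times(\text{finite-dim.\ density of the bridge at the }s_i)$. What Kato--Trotter buys is that the cylindrical-measure identification is done once, with the convergence $G_m\to U$ established in operator norm; what Dyson buys is that no Trotter-error lemma is needed (boundedness of $V$ already gives normal convergence of the series) at the price of justifying, at each order $k$, the passage from the time-ordered iterated Riemann sums to $\mathbb E_\mu[(\int V)^k]$ --- a step you correctly flag as the main remaining work. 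The paper also routes through the un-recentered measure $\mu_{xy}$ (mean $\omega_0$) and then performs the translation $\omega\mapsto\omega_0+\tilde\omega$ to reach the pinned $\mu$; you go directly to $\mu$, which is equivalent. Your treatment of \eqref{EstimatePotential} is essentially the paper's Corollary on $\partial_y E$, and the explicit bound $|\partial_y\omega_0(s)|\le 1$ you compute is exactly what makes $|\partial_y V(\omega_0+\omega,s)|\lesssim\beta^{1/2}$ usable.

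One point to fix: you end with $e^{-\int_\sigma^\tau V\,ds}$ and assert this ``is \eqref{eqn:BNeg1}''; as printed, \eqref{eqn:BNeg1} has $e^{+\int V}$, so the two do not literally agree. Your sign is the correct one for a propagator generated by $-L=-(L_0+V)$ (Duhamel gives $U=U_0-\int U_0VU$, hence $(-1)^k$ in the Dyson series), and it is consistent with \eqref{FK2} in the body of the paper, which also carries the $e^{-\int V}$. The $+$ in the appendix theorem (and in the appendix's Duhamel and Trotter displays) is a sign slip carried consistently through that section; the corollary on $|\partial_y E|$ even assumes $V\le 0$ precisely to make $e^{+\int V}\le 1$, whereas the $V$ of Section \ref{Section:PropEst} satisfies $V\ge 0$, matching $e^{-\int V}\le 1$. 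You should state explicitly that you are proving the formula with $e^{-\int V}$ and note the discrepancy with the statement as printed, rather than silently equating the two.
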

\begin{remark}
\label{remark:OrnsteinUhlenbeck} $d\mu(\omega)$ is the Gaussian
measure with mean zero and covariance $(-\p_s^2+\alpha^2)^{-1}$,
normalized to 1. The path $\omega_0(s)$ solves the boundary value
problem
\begin{equation}
(-\p_s^2+\alpha^2)\omega_0=0\ \mbox{with}\ \omega(\sigma)=y\
\mbox{and}\ \omega(\tau)=x. \label{eqn:133a}
\end{equation}
Below we will also deal with the normalized Gaussian measure
$d\mu_{x y}(\omega)$ with mean $\omega_0(s)$ and covariance
$(-\p_s^2+\alpha^2)^{-1}$. This is a conditional Ornstein-Uhlenbeck
probability measure on continuous paths
$\omega:[\sigma,\tau]\rightarrow\R^n$ with $\omega(\sigma)=y$ and
$\omega(\tau)=x$ (see e.g.\ \cite{GlJa, Hida, Simon}).
\end{remark} Now, assume in addition that the function $V(y,\tau)$ satisfies the
estimates \begin{equation}\label{eq:diffeV} V\leq 0\ \text{and}\
|\partial_{y}V(y,\tau)|\lesssim
\beta^{\frac{1}{2}}(\tau)\end{equation} where $\beta(\tau)$ is a
positive function. Then Theorem \ref{THM:trotter} implies Equation
(\ref{EstimatePotential}) by the following corollary.
\begin{cor}\label{cor:diff} Under (\ref{eq:diffeV}),
$$|\partial_{y}  \int e^{  \int_{\sigma}^{\tau}V(\omega_{0}(s)+\omega(s),s)ds}d\mu(\omega)|\lesssim
|\tau-\sigma|\sup _{\sigma\leq s\leq \tau} \beta^{\frac{1}{2}}(\tau) $$
\end{cor}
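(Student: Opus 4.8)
The plan is to differentiate under the integral sign. The key observation is that the only $y$-dependence of the integrand $\exp\big(\int_\sigma^\tau V(\omega_0(s)+\omega(s),s)\,ds\big)$ enters through the deterministic path $\omega_0$, which by \eqref{eqn:B0} is \emph{affine} in $y$, with
\[
\partial_y\omega_0(s)=e^{\alpha(\sigma-s)}\,\frac{e^{2\alpha\tau}-e^{2\alpha s}}{e^{2\alpha\tau}-e^{2\alpha\sigma}}.
\]
First I would note that for $\alpha>0$ and $\sigma\le s\le\tau$ each of the two factors on the right lies in $[0,1]$ (for the second one, the numerator is nonnegative since $s\le\tau$, and bounded by the denominator since $s\ge\sigma$), so $|\partial_y\omega_0(s)|\le 1$ uniformly in $s\in[\sigma,\tau]$.

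Next, since $V(\cdot,s)$ is bounded with $\partial_yV$ bounded by hypothesis \eqref{eq:diffeV}, for each fixed path $\omega$ the map $y\mapsto \exp\big(\int_\sigma^\tau V(\omega_0(s)+\omega(s),s)\,ds\big)$ is differentiable, with derivative
\[
\Big(\int_\sigma^\tau \partial_yV(\omega_0(s)+\omega(s),s)\,\partial_y\omega_0(s)\,ds\Big)\exp\Big(\int_\sigma^\tau V(\omega_0(s)+\omega(s),s)\,ds\Big).
\]
Using $V\le 0$, so that the exponential factor is $\le 1$, together with $|\partial_y\omega_0(s)|\le 1$ and $|\partial_yV(\cdot,s)|\lesssim\beta^{1/2}(s)$, this derivative is bounded in absolute value by $|\tau-\sigma|\sup_{\sigma\le s\le\tau}\beta^{1/2}(s)$, \emph{uniformly in the path} $\omega$. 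This uniform bound is exactly what is needed to interchange $\partial_y$ with the probability integral $\int(\cdot)\,d\mu(\omega)$ by dominated convergence.

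Finally, integrating the pointwise-in-$\omega$ bound against $d\mu$ and using that $d\mu$ is a probability measure (total mass $1$) gives
\[
\Big|\partial_y\int e^{\int_\sigma^\tau V(\omega_0(s)+\omega(s),s)\,ds}\,d\mu(\omega)\Big|\le\int\int_\sigma^\tau|\partial_yV(\omega_0(s)+\omega(s),s)|\,ds\,d\mu(\omega)\lesssim|\tau-\sigma|\sup_{\sigma\le s\le\tau}\beta^{1/2}(s),
\]
which is the assertion (and, combined with Theorem~\ref{THM:trotter}, yields \eqref{EstimatePotential}). The only mildly delicate step is the legitimacy of differentiating under the path-space integral, but the uniform-in-$\omega$ derivative bound above takes care of it; everything else is elementary.
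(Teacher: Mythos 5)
Your proof is correct and follows essentially the same route as the paper's: differentiate under the path integral, use $V\le 0$ to bound the exponential by $1$ and $|\partial_y V|\lesssim\beta^{1/2}$ to bound the derivative of the inner integral, then integrate against the probability measure $d\mu$. The one thing you do more carefully than the paper is to make the chain-rule factor $\partial_y\omega_0(s)$ explicit and verify $|\partial_y\omega_0(s)|\le 1$ for $\sigma\le s\le\tau$, a step the paper silently absorbs into the displayed bound.
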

\begin{proof}
By Fubini's theorem $$\partial_{y}  \int e^{
\int_{\sigma}^{\tau}V(\omega_{0}(s)+\omega(s),s)ds}d\mu(\omega)=
\int
\partial_{y}[ \int_{0}^{\tau}V(\omega_{0}(s)+\omega(s),s)ds]e^{ \int_{\sigma}^{\tau}V(\omega_{0}(s)+\omega(s),s)ds}d\mu(\omega)$$
Equation (\ref{eq:diffeV}) implies$$ |\partial_{y}
\int_{\sigma}^{\tau}V(\omega_{0}(s)+\omega(s),s)ds|\leq
|\tau-\sigma| \sup _{\sigma\leq s\leq \tau}
\beta^{\frac{1}{2}}(\tau) ,\ \text{and}\ e^{
\int_{\sigma}^{\tau}V(\omega_{0}(s)+\omega(s),s)ds}\leq 1.$$ Thus
$$|\partial_{y}  \int
e^{
\int_{\sigma}^{\tau}V(\omega_{0}(s)+\omega(s),s)ds}d\mu(\omega)|\lesssim
|\tau-\sigma| \sup _{\sigma\leq s\leq \tau}
\beta^{\frac{1}{2}}(\tau)  \int d\mu(\omega)=|\tau-\sigma| \sup
_{\sigma\leq s\leq \tau} \beta^{\frac{1}{2}}(\tau)
$$ to complete the proof.
\end{proof}

\begin{proof}[Proof of Theorem \ref{THM:trotter}]
We begin with the following extension of the Ornstein-Uhlenbeck
process-based Feynman-Kac formula to time-dependent potentials:
\begin{equation}
U(\tau,\sigma)(x,y)=U_0(\tau,\sigma)(x,y) \int e^{ \int_\sigma^\tau
V(\omega(s),s)\, ds}d\mu_{x y}(\omega). \label{eqn:B2}
\end{equation}
where $d\mu_{x y}(w)$ is the conditional Ornstein-Uhlenbeck
probability measure described in Remark
\ref{remark:OrnsteinUhlenbeck} above. This formula can be proven in
the same way as the one for time independent potentials (see
\cite{GlJa}, Equation (3.2.8)), i.e. by using the Kato-Trotter
formula and evaluation of Gaussian measures on cylindrical sets.
Since its proof contains a slight technical wrinkle, for the
reader's convenience we present it below.

Now changing the variable of integration in \eqref{eqn:B2} as
$\omega=\omega_0+\tilde{\omega}$, where $\tilde{\omega}(s)$ is a
continuous path with boundary conditions
$\tilde{\omega}(\sigma)=\tilde{\omega}(\tau)=0$, using the
translational change of variables formula $ \int f(\omega)\,
d\mu_{xy}(\omega)= \int f(\omega_0+\tilde{\omega})\,
d\mu(\tilde{\omega})$, which can be proven by taking
$f(\omega)=e^{i\langle\omega,\zeta\rangle}$ and using
\eqref{eqn:133a} (see \cite{GlJa}, Equation (9.1.27)) and omitting
the tilde over $\omega$ we arrive at \eqref{eqn:BNeg1}.
\end{proof}
There are at least three standard ways to prove \eqref{eqn:B2}: by
using the Kato-Trotter formula, by expanding both sides of the
equation in $V$ and comparing the resulting series term by term and
by using Ito's calculus (see \cite{KaSh, Simon,RSII,GlJa}). The
first two proofs are elementary but involve tedious estimates while
the third proof is based on a fair amount of stochastic calculus.
For the reader's convenience, we present the first elementary proof
of \eqref{eqn:B2}.

Before starting proving \eqref{eqn:B2} we establish an auxiliary
result. We define the operator $\mathcal{K}$ as
\begin{equation}\label{eqn:defK}
\mathcal{K}(\sigma,\delta):=
\int_{0}^{\delta}U_{0}(\sigma+\delta,\sigma+s)V(\sigma+s,\cdot)U_{0}(\sigma+s,\sigma)ds-U_{0}(\sigma+\delta,\sigma)
\int_{0}^{\delta} V(\sigma+s,\cdot)ds
\end{equation}

\begin{lemma}For any $\xi\in\mathcal{C}_{0}^{\infty}$ we
have, as $\delta\rightarrow 0^+$,
\begin{equation}\label{eqn:appro}
\sup_{0\leq \sigma\leq
\tau}\|\frac{1}{\delta}\mathcal{K}(\sigma,\delta)U(\sigma,0)\xi\|_{2}\rightarrow
0.
\end{equation}
\end{lemma}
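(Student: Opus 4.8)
The plan is to rewrite $\mathcal{K}(\sigma,\delta)$ as a single time integral of a commutator and then exploit the small-time smoothing of the Mehler kernel of $e^{-rL_{0}}$. Since $L_{0}$ is time-independent in this appendix, $U_{0}(\tau,\sigma)=e^{-(\tau-\sigma)L_{0}}$, and the semigroup identity $e^{-\delta L_{0}}=e^{-(\delta-s)L_{0}}e^{-sL_{0}}$ applied to the definition \eqref{eqn:defK} gives, for $0\le s\le\delta$,
\begin{equation*}
\mathcal{K}(\sigma,\delta)=\int_{0}^{\delta}e^{-(\delta-s)L_{0}}\,\bigl[V(\sigma+s,\cdot),\,e^{-sL_{0}}\bigr]\,ds .
\end{equation*}
Because $L_{0}=-\Delta_{y}+\frac{\alpha^{2}}{4}|y|^{2}-\frac{\alpha}{2}\ge\frac{(n-1)\alpha}{2}\ge 0$, the operator $e^{-rL_{0}}$ is an $L^{2}$-contraction, so with $\psi:=U(\sigma,0)\xi$,
\begin{equation*}
\Bigl\|\tfrac{1}{\delta}\mathcal{K}(\sigma,\delta)\psi\Bigr\|_{2}\le\tfrac{1}{\delta}\int_{0}^{\delta}\bigl\|[V(\sigma+s,\cdot),e^{-sL_{0}}]\bigr\|_{L^{2}\to L^{2}}\,ds\;\|\psi\|_{2}.
\end{equation*}

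The crux is the estimate $\|[V(\tau',\cdot),e^{-sL_{0}}]\|_{L^{2}\to L^{2}}\lesssim\sqrt{s}$, uniformly in $\tau'\in[0,\tau]$ and $s\in(0,\delta_{0}]$ for some fixed small $\delta_{0}$. The integral kernel of this commutator is $M_{s}(x,y)\bigl(V(\tau',x)-V(\tau',y)\bigr)$, where $M_{s}\ge0$ is the (symmetric) Mehler kernel of $e^{-sL_{0}}$. Using that $\nabla_{y}V$ is bounded uniformly in time (cf.\ \eqref{eq:diffeV}), $|V(\tau',x)-V(\tau',y)|\lesssim|x-y|$, so the kernel is dominated by $M_{s}(x,y)|x-y|$. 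For $s\le\delta_{0}$ one has $M_{s}(x,y)\lesssim s^{-n/2}e^{-c|x-y|^{2}/s}$ for some $c>0$ (the harmonic term only improves the Gaussian decay at small $s$), hence $\sup_{x}\int M_{s}(x,y)|x-y|\,dy\lesssim\sqrt{s}$, and by symmetry of $M_{s}$ the same bound holds with $x$ and $y$ interchanged; Schur's test then yields the claimed operator-norm bound. (Alternatively, one may write $[V(\tau',\cdot),e^{-sL_{0}}]=\int_{0}^{s}e^{-(s-r)L_{0}}[L_{0},V(\tau',\cdot)]e^{-rL_{0}}\,dr$ with $[L_{0},V]=-(\Delta V)-2\nabla V\cdot\nabla$ a first-order operator with bounded coefficients, and use $\|\nabla e^{-rL_{0}}\|_{L^{2}\to L^{2}}\lesssim r^{-1/2}$.)

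Combining the two displays gives $\bigl\|\frac{1}{\delta}\mathcal{K}(\sigma,\delta)\psi\bigr\|_{2}\lesssim\sqrt{\delta}\,\|\psi\|_{2}$. It then remains to note that $\|\psi\|_{2}=\|U(\sigma,0)\xi\|_{2}\le\bigl(\sup_{0\le\sigma\le\tau}\|U(\sigma,0)\|_{L^{2}\to L^{2}}\bigr)\|\xi\|_{2}$, which is finite: since $V\le0$ is bounded, the elementary energy identity $\frac{d}{dt}\|u(t)\|_{2}^{2}=-2\langle L_{0}u,u\rangle-2\langle Vu,u\rangle\le2\|V\|_{\infty}\|u(t)\|_{2}^{2}$ gives $\|U(\sigma,0)\|_{L^{2}\to L^{2}}\le e^{\|V\|_{\infty}\sigma}$, which is bounded on the compact interval $[0,\tau]$. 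Taking the supremum over $\sigma\in[0,\tau]$ and letting $\delta\to0^{+}$ yields \eqref{eqn:appro}.

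\textbf{Main obstacle.} The only non-routine ingredient is the uniform $O(\sqrt{s})$ bound on the commutator norm, which rests on small-time Gaussian control of the Mehler kernel of $e^{-sL_{0}}$ (or, in the alternative route, on the $r^{-1/2}$ gradient bound for $e^{-rL_{0}}$). If one wishes to rely only on the bare hypotheses ``$V$ bounded and Lipschitz in $\tau$'', the quantitative bound can be replaced by a soft argument: for fixed $\psi$, $\|[V(\sigma+s,\cdot),e^{-sL_{0}}]\psi\|_{2}\le\|V\|_{\infty}\|(e^{-sL_{0}}-I)\psi\|_{2}+\|(e^{-sL_{0}}-I)V(\sigma+s,\cdot)\psi\|_{2}\to0$ as $s\to0$, and the convergence is uniform in $\sigma$ because $\{U(\sigma,0)\xi:\sigma\in[0,\tau]\}$ and $\{V(\sigma+s,\cdot)U(\sigma,0)\xi:\sigma\in[0,\tau],\,0\le s\le\delta_{0}\}$ are relatively compact in $L^{2}$ (continuity of $\sigma\mapsto U(\sigma,0)\xi$ and of $\sigma\mapsto V(\sigma,\cdot)$ in operator norm) while $e^{-sL_{0}}\to I$ strongly; averaging an $o(1)$ integrand over $[0,\delta]$ still gives $o(1)$. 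Note in particular that $\xi\in\mathcal{C}_{0}^{\infty}$ enters only through $\|\xi\|_{2}<\infty$.
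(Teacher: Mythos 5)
Your proposal is correct and self-aware, but the operative argument is your ``soft'' fallback, and that is what should be compared to the paper. The exact identity $\mathcal{K}(\sigma,\delta)=\int_0^\delta e^{-(\delta-s)L_0}\bigl[V(\sigma+s,\cdot),e^{-sL_0}\bigr]\,ds$ is a clean replacement for the paper's hand-made splitting $\mathcal{K}=\mathcal{K}_1+\mathcal{K}_2$ (where $\mathcal{K}_2$ isolates the Lipschitz-in-time increments of $V$). Your primary estimate $\|[V,e^{-sL_0}]\|_{L^2\to L^2}\lesssim\sqrt s$, however, uses spatial Lipschitz continuity of $V$; that is the \emph{extra} assumption \eqref{eq:diffeV}, introduced only after Theorem \ref{THM:trotter} for Corollary \ref{cor:diff}, and not available in the proof of the Feynman--Kac formula itself, where $V$ is merely bounded and Lipschitz in $\tau$ (the alternative route you sketch via $[L_0,V]$ needs even more, namely $\Delta V$ bounded). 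Your soft argument is the one that actually closes the lemma under the stated hypotheses: it uses the compactness in $L^2$ of $\{U(\sigma,0)\xi:\sigma\in[0,\tau]\}$ and of $\{V(\sigma',\cdot)U(\sigma,0)\xi:\sigma\in[0,\tau],\,\sigma'\in[0,\tau+\delta_0]\}$ (continuity of $\sigma\mapsto U(\sigma,0)\xi$ plus Lipschitz-in-$\tau$ of $V$), together with the strong continuity of $e^{-sL_0}$ at $s=0$ and the $L^2$-contractivity of $e^{-rL_0}$. This route is genuinely different from, and somewhat more robust than, the paper's: the paper freezes $V$ at time $\sigma$ in $\mathcal{K}_1$, invokes $\xi_0\in\mathcal{C}_0^\infty$ to place both $\xi(\sigma)$ \emph{and} $V(\sigma)\xi(\sigma)$ in the domain of $L_0$, and applies the generator limit $s^{-1}(e^{-sL_0}-1)g\to -L_0g$; the second domain membership tacitly requires spatial regularity of $V$ beyond what the theorem hypothesizes (harmless in the actual application, where $V$ is smooth, but not granted by the lemma's assumptions). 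Your soft argument needs only $\xi\in L^2$, as you observe, avoids any domain questions for $V\xi(\sigma)$, and so establishes the lemma at the advertised level of generality.
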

\begin{proof}
If the potential term, $V$, is independent of $\tau$, then the proof
is standard (see, e.g.\ \cite{RSII}). We use the property that the
function $V$ is Lipschitz continuous in time $\tau$ to prove
(\ref{eqn:appro}). The operator $\mathcal{K}$ can be further
decomposed as
$$\mathcal{K}(\sigma,\delta)=\mathcal{K}_{1}(\sigma,\delta)+\mathcal{K}_{2}(\sigma,\delta)$$ with
$$\mathcal{K}_{1}(\sigma,\delta):= \int_{0}^{\delta}U_{0}(\sigma+\delta,\sigma+s)
V(\sigma,\cdot)U_{0}(\sigma+s,\sigma)ds-\delta
U_{0}(\sigma+\delta,\sigma) V(\sigma,\cdot)$$ and
$$\mathcal{K}_{2}(\sigma,
\delta):= \int_{0}^{\delta}U_{0}(\sigma+\delta,\sigma+s)
[V(\sigma+s,\cdot)-V(\sigma,\cdot)]U_{0}(\sigma+s,\sigma)ds-U_{0}(\sigma+\delta,\sigma)
 \int_{0}^{\delta} [V(\sigma+s,\cdot)-V(\sigma,\cdot)]ds.$$

Since $U_{0}(\tau,\sigma)$ are uniformly $L^{2}$-bounded and $V$ is
bounded, we have $U(\tau,\sigma)$ is uniformly $L^2$-bounded. This
together with the fact that the function $V(\tau,y)$ is Lipschitz
continuous in $\tau$ implies that
$$\|\mathcal{K}_{2}(\sigma,\delta)\|_{L^{2}\rightarrow L^{2}}\lesssim 2 \int_{0}^{\delta}sds=\delta^{2}.$$

We rewrite $\mathcal{K}_{1}(\sigma,\delta)$ as
$$\mathcal{K}_{1}(\sigma,\delta)= \int_{0}^{\delta} U_{0}
(\sigma+\delta,\sigma+s)\{V(\sigma,\cdot)[U_{0}(\sigma+s,\sigma)-1]-
[U_{0}(\sigma+s,\sigma)-1]V(\sigma,\cdot)\}ds.$$  Let
$\xi(\sigma)=U(\sigma,0)\xi$.  We claim that for a fixed $\sigma\in
[0,\tau]$,
\begin{equation}
\|\mathcal{K}_{1}(\sigma,\delta)\xi(\sigma)\|_{2}=o(\delta).
\label{eqn:142a}
\end{equation}
 Indeed, the fact
$\xi_{0}\in \mathcal{C}_{0}^{\infty}$ implies that
$L_{0}\xi(\sigma),\ L_{0}V(\sigma)\xi(\sigma)\in L^{2}.$
Consequently (see \cite{RSI})
$$\lim_{s\rightarrow
0^+}\frac{(U_{0}(\sigma+s,\sigma)-1)g}{s}\rightarrow L_{0}g, $$ for
$g=\xi(\sigma)\ \text{or}\ V(\sigma,y)\xi(\sigma)$ which implies our
claim. Since the set of functions $\{\xi(\sigma)|\sigma\in
[0,\tau]\}\subset L_{0}L^{2}$ is compact and
$\|\frac{1}{\delta}K_{1}(\sigma,\delta)\|_{L^2\rightarrow L^2}$ is
uniformly bounded, we have \eqref{eqn:142a} as $\delta\rightarrow 0$
uniformly in $\sigma\in [0,\tau]$.

Collecting the estimates on the operators $\mathcal{K}_{i},\ i=1,2$,
we arrive at (\ref{eqn:appro}).
\end{proof}
\begin{lemma}
Equation \eqref{eqn:B2} holds.
\end{lemma}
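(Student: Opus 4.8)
The plan is to prove \eqref{eqn:B2} by the Kato--Trotter product formula, exactly as in the time-independent case (\cite{GlJa}, Eq.~(3.2.8)), with the auxiliary Lemma on $\mathcal{K}$ furnishing the single extra estimate needed to control the time ordering. Fix $\sigma<\tau$ and $\xi\in\mathcal{C}_0^\infty$, let $n\in\NN$, $\delta:=(\tau-\sigma)/n$, $\sigma_j:=\sigma+(j-1)\delta$, and write $\xi(\sigma'):=U(\sigma',0)\xi$. The first step is the one-step estimate
\begin{equation*}
U(\sigma_j+\delta,\sigma_j)\,\xi(\sigma_j)=U_0(\sigma_j+\delta,\sigma_j)\,e^{\int_0^\delta V(\sigma_j+s,\cdot)\,ds}\,\xi(\sigma_j)+o(\delta),
\end{equation*}
with the $o(\delta)$ uniform in $j$. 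Indeed, Duhamel's formula gives $U(\sigma_j+\delta,\sigma_j)=U_0(\sigma_j+\delta,\sigma_j)+\int_0^\delta U_0(\sigma_j+\delta,\sigma_j+s)V(\sigma_j+s,\cdot)U(\sigma_j+s,\sigma_j)\,ds$; since $V$ is bounded and $U,U_0$ are uniformly $L^2$-bounded on $[\sigma,\tau]$ (the latter because $\|e^{-rL_0}\|\le e^{2\alpha r}$), replacing $U(\sigma_j+s,\sigma_j)$ by $U_0(\sigma_j+s,\sigma_j)$ costs $O(\delta^2)$; the resulting first-order term differs from $U_0(\sigma_j+\delta,\sigma_j)\int_0^\delta V(\sigma_j+s,\cdot)\,ds$ by exactly $\mathcal{K}(\sigma_j,\delta)$ applied to the relevant vector, which is $o(\delta)$ uniformly by \eqref{eqn:appro}; and finally $U_0(\sigma_j+\delta,\sigma_j)\bigl(I+\int_0^\delta V(\sigma_j+s,\cdot)\,ds\bigr)=U_0(\sigma_j+\delta,\sigma_j)e^{\int_0^\delta V(\sigma_j+s,\cdot)\,ds}+O(\delta^2)$.

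Next I would iterate this estimate over $j=n,n-1,\dots,1$. Telescoping the product of exact one-step propagators against the product of the approximate ones, and using that each factor $U_0(\sigma_j+\delta,\sigma_j)e^{\int_0^\delta V(\sigma_j+s,\cdot)\,ds}$ is uniformly $L^2$-bounded, the accumulated error is $n\cdot o(\delta)=o(1)$, so
\begin{equation*}
U(\tau,\sigma)\xi=\lim_{n\to\infty}\Bigl(\prod_{j=n}^{1}U_0(\sigma_j+\delta,\sigma_j)\,e^{\int_0^\delta V(\sigma_j+s,\cdot)\,ds}\Bigr)\xi
\end{equation*}
in $L^2$ (a Chernoff/Trotter-type argument). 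The integral kernel of $U_0(\sigma_j+\delta,\sigma_j)e^{\int_0^\delta V(\sigma_j+s,\cdot)\,ds}$ is $U_0(\sigma_j+\delta,\sigma_j)(x,y)\exp\!\bigl(\int_0^\delta V(\sigma_j+s,y)\,ds\bigr)$, so by Chapman--Kolmogorov for $U_0$ together with the Ornstein--Uhlenbeck bridge identity — i.e.\ the Markov decomposition of $d\mu_{xy}$ over the intermediate times $\sigma_j$, which is precisely the statement that $U_0$ is the propagator of the Ornstein--Uhlenbeck semigroup (see Remark~\ref{remark:OrnsteinUhlenbeck} and \eqref{eqn:133a}) — the kernel of the $n$-fold product equals
\begin{equation*}
U_0(\tau,\sigma)(x,y)\int\exp\!\Bigl(\sum_{j=1}^{n}\int_0^\delta V\bigl(\sigma_j+s,\omega(\sigma_j)\bigr)\,ds\Bigr)\,d\mu_{xy}(\omega).
\end{equation*}
This is a cylinder-set (Riemann-sum) approximation of the right-hand side of \eqref{eqn:B2}: as $n\to\infty$ the exponent converges to $\int_\sigma^\tau V(s,\omega(s))\,ds$ for $\mu_{xy}$-a.e.\ path $\omega$ (by continuity of $V$ in $s$ and of $\omega$), the integrands are uniformly bounded since $V$ is bounded, and the convergence of cylinder-set averages of the Gaussian measure $d\mu_{xy}$ is the classical fact exploited in \cite{GlJa}. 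Dominated convergence then identifies the kernel of $U(\tau,\sigma)$ with $U_0(\tau,\sigma)(x,y)\int e^{\int_\sigma^\tau V(s,\omega(s))\,ds}\,d\mu_{xy}(\omega)$, which is \eqref{eqn:B2}.

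I expect the main obstacle to be the convergence of the product in the displayed limit: one must verify that the $o(\delta)$ per-step error is uniform in the base time $\sigma_j\in[\sigma,\tau]$ — this is exactly why the $\mathcal{K}$-Lemma is stated with $\sup_{0\le\sigma\le\tau}$ — and that, after the finitely many telescoping substitutions, it survives multiplication by the (uniformly $L^2$-bounded) remaining factors, so that $n$ copies contribute $o(1)$ rather than accumulating. Everything else is the classical time-independent Feynman--Kac argument transcribed essentially verbatim, the only genuinely new input being the Lipschitz-in-time control of $V$, which has already been used in the proof of \eqref{eqn:appro}.
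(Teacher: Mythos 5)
Your proposal is correct and follows essentially the same route as the paper: a Kato--Trotter product formula proved by Duhamel expansion plus the $\mathcal{K}$-lemma to control the one-step error, followed by writing the kernel of the finite product as a cylinder-set (Riemann-sum) approximation of the Ornstein--Uhlenbeck bridge expectation and passing to the limit by dominated convergence. The packaging differs superficially — you phrase the first step as a uniform one-step estimate iterated Chernoff-style, whereas the paper writes the discrepancy directly as a telescoping sum $\sum_j(\prod\cdots)A_j U(\tfrac{j}{m}\tau,0)$ and shows $\|A_j+\mathcal{K}\|\lesssim m^{-2}$ — but the mathematical content is identical.
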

\begin{proof} In order to simplify our notation, in the proof that follows we assume,
without losing generality, that $\sigma=0$.  We divide the proof
into two parts. First we prove that for any fixed $\xi\in
\mathcal{C}_{0}^{\infty}$ the following Kato-Trotter type formula
holds
\begin{equation}\label{eqn:trotter}
U(\tau,0)\xi=\lim_{m\rightarrow \infty}\prod_{0\leq k\leq
m-1}U_{0}(\frac{k+1}{m}\tau, \frac{k}{m}\tau)e^{
\int_{\frac{k\tau}{m}}^{\frac{(k+1)\tau}{m}}V(y,s)ds}\xi
\end{equation} in the $L^{2}$ space.
We start with the formula
$$
\begin{array}{lll}
& &U(\tau,0)-\displaystyle\prod_{0\leq k\leq m-1}U_{0}(\frac{k+1}{m}\tau, \frac{k}{m}\tau)e^{ \int_{\frac{k\tau}{m}}^{\frac{(k+1)\tau}{m}}V(y,s)ds}\\
&=&\displaystyle\prod_{0\leq k\leq m-1}U(\frac{k+1}{m}\tau, \frac{k}{m}\tau)-\prod_{0\leq k\leq m-1}U_{0}(\frac{k+1}{m}\tau, \frac{k}{m}\tau)e^{ \int_{\frac{k\tau}{m}}^{\frac{(k+1)\tau}{m}}V(y,s)ds}\\
&=&\displaystyle\sum_{0\leq j\leq m}\prod_{j\leq k\leq
m-1}U_{0}(\frac{k+1}{m}\tau, \frac{k}{m}\tau)e^{
\int_{\frac{k\tau}{m}}^{\frac{(k+1)\tau}{m}}V(y,s)ds}A_{j}U(\frac{j}{m}\tau,
0)
\end{array}
$$ with the operator $$A_{j}:=U_{0}(\frac{j+1}{m}\tau, \frac{j}{m}\tau)e^{ \int_{\frac{j\tau}{m}}^{\frac{(j+1)\tau}{m}}V(y,s)ds}-U(\frac{j+1}{m}\tau, \frac{j}{m}\tau).$$

We observe that $\|U_{0}(\tau,\sigma)\|_{L^{2}\rightarrow L^{2}}\leq
1$, and moreover, by the boundness of $V,$ the operator
$U(\tau,\sigma)$ is uniformly bounded in $\tau$ and $\sigma$ in any
compact set. Consequently
\begin{equation}\label{eq:b4}
\begin{array}{lll}
& &\|[U(\tau,0)-\displaystyle\prod_{0\leq k\leq m-1}U_{0}(\frac{k+1}{m}\tau, \frac{k}{m}\tau)e^{ \int_{\frac{k\tau}{m}}^{\frac{(k+1)\tau}{m}}V(y,s)ds}]\xi\|_{2}\\
&\leq &\displaystyle\max_{j}m\|\displaystyle\prod_{j\leq k\leq m-1}U_{0}(\frac{k+1}{m}\tau, \frac{k}{m}\tau)e^{ \int_{\frac{k\tau}{m}}^{\frac{(k+1)\tau}{m}}V(y,s)ds}A_{j}U(\frac{j}{m}\tau, 0)\xi\|_{2}\\
&\lesssim
&m\displaystyle\max_{j}\|A_{j}+\mathcal{K}(\frac{k}{m}\tau,
\frac{1}{m}\tau)\|_{L^{2}\rightarrow
L^{2}}+\displaystyle\max_{j}m\|\mathcal{K}(\frac{j}{m}\tau,\frac{1}{m}\tau)U(\frac{j}{m},0)\xi\|_{2}
\end{array}
\end{equation}
where, recall the definition of $\mathcal{K}$ from
(\ref{eqn:defK}). Now we claim that
\begin{equation}\label{eqn:assum}
\|A_{j}+\mathcal{K}(\frac{k}{m}\tau,
\frac{1}{m}\tau)\|_{L^2\rightarrow L^2}\lesssim \frac{1}{m^{2}}.
\end{equation}

Indeed, by the Duhamel principle we have
$$U(\frac{j+1}{m}\tau,\frac{j}{m}\tau)=U_{0}(\frac{j+1}{m}\tau,\frac{j}{m}\tau)+ \int_{\frac{j}{m}\tau}^{\frac{j+1}{m}\tau}U_{0}(\frac{j+1}{m}\tau,s)V(y,s)U(s,\frac{j}{m}\tau)ds.$$
Iterating this equation on $U(s,\frac{k}{m}\tau)$ and using the fact
that $U(s,t)$ is uniformly bounded if $s,t$ is on a compact set, we
obtain
$$\|U(\frac{j+1}{m}\tau,\frac{j}{m}\tau)-U_{0}(\frac{j+1}{m}\tau,\frac{j}{m}\tau)- \int_{0}^{\frac{1}{m}\tau}U_{0}(\frac{j+1}{m}\tau,s)V(y,s)U_{0}(s,\frac{j}{m}\tau)ds\|_{L^{2}\rightarrow L^{2}}\lesssim \frac{1}{m^{2}}.$$
On the other hand we expand $e^{
\int_{\frac{j\tau}{m}}^{\frac{(j+1)\tau}{m}}V(y,s)ds}$ and use the
fact that $V$ is bounded to get
$$\|U_{0}(\frac{j+1}{m}\tau,\frac{j}{m}\tau)e^{ \int_{\frac{j\tau}{m}}^{\frac{(j+1)\tau}{m}}V(y,s)ds}-U_{0}(\frac{j+1}{m}\tau,\frac{j}{m}\tau)-
U_{0}(\frac{j+1}{m}\tau,\frac{j}{m}\tau)
\int_{\frac{j\tau}{m}}^{\frac{(j+1)\tau}{m}}V(y,s)ds\|_{L^{2}\rightarrow L^{2}}\lesssim \frac{1}{m^{2}}.$$
By the definition of $\mathcal{K}$ and $A_{j}$ we complete the proof
of (\ref{eqn:assum}). Equations (\ref{eqn:appro}), (\ref{eq:b4})
and (\ref{eqn:assum}) imply (\ref{eqn:trotter}). This completes
the first step.

In the second step we compute the integral kernel, $G_{m}(x,y)$, of
the operator $$G_{m}:=\displaystyle\prod_{0\leq k\leq
m-1}U_{0}(\frac{k+1}{m}\tau, \frac{k}{m}\tau)e^{
\int_{\frac{k\tau}{m}}^{\frac{(k+1)\tau}{m}}V(\cdot,s)ds}$$ in
(\ref{eqn:trotter}). By the definition, $G_{m}(x,y)$ can be written
as
\begin{equation}
G_{m}(x,y)= \int\cdot\cdot\cdot \int \prod_{0\leq k\leq
m-1}U_{\frac{\tau}{m}}(x_{k+1},x_{k})e^{
\int_{\frac{k\tau}{m}}^{\frac{(k+1)\tau}{m}}V(x_{k},s)ds}dx_{1}\cdot\cdot\cdot
dx_{m-1} \label{eqn:TrotterAsterik}
\end{equation}
 with $x_{m}:=x,\ x_{0}:=y$ and $U_\tau(x,y)\equiv U_0(0,\tau)(x,y)$
 is the integral kernel of the operator $U_0(\tau,0)=e^{-L_0\tau}$.
We rewrite \eqref{eqn:TrotterAsterik} as
\begin{equation}
 G_m(x,y)=U_\tau(x,y) \int e^{\sum_{k=0}^{m-1} \int_{\frac{k\tau}{m}}
 ^{\frac{(k+1)\tau}{m}} V(x_k,s)\, ds}\, d\mu_m(x_1,\ldots, x_m),
\label{eqn:TrotterDoubAsterik1}
\end{equation}
where
\begin{equation*}
d\mu_m(x_1,\ldots, x_m):=\frac{\prod_{0\le k\le
m-1}U_{\frac{\tau}{m}}(x_{k+1},x_k)}{U_\tau(x,y)}dx_1\ldots
dx_{k-1}.
\end{equation*}
Since $G_m(x,y)|_{V=0}=U_\tau(x,y)$ we have that $ \int
d\mu_m(x_1,\ldots,x_m)=1$.  Let
$\Delta:=\Delta_1\times\ldots\times\Delta_m$, where $\Delta_j$ is an
interval in $\R$.  Define a cylindrical set
\begin{equation*}
P^m_\Delta:=\{ \omega:[0,\tau]\rightarrow\R^n\ |\ \omega(0)=y,\
\omega(\tau)=x,\ \omega(k\tau/m)\in \Delta_k,\ 1\le k\le m-1 \}.
\end{equation*}
By the definition of the measure $d\mu_{xy}(\omega)$, we have
$\mu_{xy}(P^m_\Delta)= \int_{\Delta} d\mu_m(x_1,\ldots, x_m)$. Thus,
we can rewrite \eqref{eqn:TrotterDoubAsterik1} as
\begin{equation}
 G_m(x,y)=U_\tau(x,y) \int e^{\sum_{k=0}^{m-1}
  \int_{\frac{k\tau}{m}}^{\frac{(k+1)\tau}{m}} V(\omega(\frac{k\tau}{m}),s)\, ds}\, d\mu_{xy}(\omega),
\label{eqn:TrotterDoubAsterik2}
\end{equation}
By the dominated convergence theorem the integral on the right hand
side of \eqref{eqn:TrotterDoubAsterik2} converges in the sense of
distributions as $m\rightarrow\infty$ to the integral on the right
hand side of \eqref{eqn:B2}.  Since the left hand side of
\eqref{eqn:TrotterDoubAsterik2} converges to the left hand side of
\eqref{eqn:B2}, also in the sense of distributions (which follows
from the fact that $G_m$ converges in the operator norm on $L^2$ to
$U(\tau,\sigma)$), \eqref{eqn:B2} follows.
\end{proof}

Note that on the level of finite dimensional approximations the
change of variables formula can be derived as follows.  It is
tedious, but not hard, to prove that
$$\prod_{0\leq k\leq m-1}U_{m}(x_{k+1},x_{k})=e^{-\alpha\frac{(x-e^{-\alpha \tau}y)^{2}}{2(1-e^{-2\alpha \tau})}}\prod_{0\leq k\leq m-1}U_{m}(y_{k+1},y_{k})$$
with $y_{k}:=x_{k}-\omega_{0}(\frac{k}{m}\tau)$. By the definition
of $\omega_{0}(s)$ and the relations $x_{0}=y$ and $x_{m}=x$ we have
\begin{equation}\label{eq:gnzy}
G_{m}(x,y)=U_\tau(x,y)G^{(1)}_{m}(x,y)
\end{equation}
where
\begin{equation}\label{eq:g1xy}
G^{(1)}_{m}(x,y):=\frac{1}{4\pi \sqrt{\alpha}(1-e^{-2\alpha \tau})}
\int\cdot\cdot\cdot \int\prod_{0\leq k\leq
m-1}U_{m}(y_{k+1},y_{k})e^{
\int_{\frac{k\tau}{m}}^{\frac{(k+1)\tau}{m}}V(y_{k}+\omega_{0}(\frac{k\tau}{m}),s)ds}dy_{1}\cdot\cdot\cdot
dy_{k-1}.\end{equation} Since $\displaystyle\lim_{m\rightarrow
\infty}G_{m}\xi$ exists by (\ref{eqn:appro}), we have
$\displaystyle\lim_{m\rightarrow\infty}G^{(1)}_{m}\xi$ (in the weak
limit) exists also. As shown in \cite{GlJa},
$\displaystyle\lim_{m\rightarrow\infty}G^{(1)}_{m}= \int e^{
\int_{0}^{\tau}V(\omega_{0}(s)+\omega(s),s)ds}d\mu(\omega)$ with
$d\mu$ being the (conditional) Ornstein-Uhlenbeck measure on the set
of path from $0$ to $0.$ This completes the derivation of the change
of variables formula.
\begin{remark}
In fact, Equations \eqref{eqn:trotter}), \eqref{eq:gnzy} and
\eqref{eq:g1xy}) suffice to prove the estimate in Corollary
\ref{cor:diff}.
\end{remark}

\section*{Appendix 2: Computations and Proofs}\label{sec:comp}

\paragraph{Equation \eqref{eqn:A_1}: Computation of $A_1$.}
Here through some examples we show how to compute the matrix $A_1$.
We have
\begin{equation*}
\begin{array}{lll}
\ip{\p_a V_{\mu}}{\varphi_{az}^{ij}}&=&\lambda^{-n+\frac{2}{p-1}}\ip{\p_a V_{ab}}{\phi_a^{(ij)}}\\
&=&\frac{\lambda^{-n+\frac{2}{p-1}}}{p-1}(\frac{a+\frac{1}{2}}{p-1})^{\frac{1}{p-1}}\frac{1}{a+\frac{1}{2}}
\int e^{-\frac{a}{4}|y|^2}\phi_a^{(ij)}dy + O(\|b\|)\\
&=&\frac{\lambda^{-n+\frac{2}{p-1}}}{p-1}(\frac{a+\frac{1}{2}}{p-1})^{\frac{1}{p-1}}
\frac{1}{a+\frac{1}{2}}(\frac{2\pi}{a})^{n/2}\delta_{ij}
+O(\|b\|),
\end{array}
\end{equation*}
\begin{equation*}
\begin{array}{lll}
\ip{\p_{b_{ii}} V_{\mu}}{\varphi_{az}^{jj}}&=&\lambda^{-n+\frac{2}{p-1}}\ip{\p_{b_{ii}} V_{ab}}{e^{-\frac{a}{4}|y|^2}\phi_a^{(jj)}}\\
&=&-\frac{\lambda^{-n+\frac{2}{p-1}}}{(p-1)^2}(\frac{a+\frac{1}{2}}{p-1})^{\frac{1}{p-1}}
\int y_i^2e^{-\frac{a}{4}|y|^2}\phi_a^{(jj)}dy + O(\|b\|)\\
&=&\left\{
\begin{array}{l}
-\frac{\lambda^{-n+\frac{2}{p-1}}}{(p-1)^2}(\frac{a+\frac{1}{2}}{p-1})^{\frac{1}{p-1}}
\frac{3}{a}(\frac{2\pi}{a})^{n/2}
+O(\|b\|),\ \text{if}\ i=j,\\
-\frac{\lambda^{-n+\frac{2}{p-1}}}{(p-1)^2}(\frac{a+\frac{1}{2}}{p-1})^{\frac{1}{p-1}}
\frac{1}{a}(\frac{2\pi}{a})^{n/2}
+O(\|b\|),\ \text{if}\ i\neq j.
\end{array}
\right.
\end{array}
\end{equation*}
Similarly we can compute all the other entries.

\paragraph{Derivation of Equation \eqref{eqn:FluctuationGauged}-\eqref{eqn:G_1}.}
Let $v=V_{ab}+\xi$, then we have
\begin{equation}\label{eqn:v_derivative}
\begin{array}{lll}
\p_\tau v&=&\frac{1}{p-1}(\frac{c}{p-1+yby})^{\frac{1}{p-1}-1}
\frac{c_\tau(p-1+yby)-c y b_\tau y}{(p-1+yby)^2}+\xi_\tau,\\
\p_{y_i}v&=&-\frac{1}{p-1}(\frac{c}{p-1+yby})^{\frac{1}{p-1}}\frac{2\sum_j b_{ij}y_j}{p-1+yby}
+\p_{y_i}\xi,\\
\p_{y_i}^2 v&=&\frac{1}{(p-1)^2}(\frac{c}{p-1+yby})^{\frac{1}{p-1}}(\frac{2\sum_j b_{ij}y_j}{p-1+yby})^2
-\frac{1}{p-1}(\frac{c}{p-1+yby})^{\frac{1}{p-1}}\frac{2b_{ii}(p-1+yby)-(2\sum_j b_{ij}y_j)^2}{(p-1+yby)^2}\\
&&+\p_{y_i}^2\xi.
\end{array}
\end{equation}
Plugging \eqref{eqn:v_derivative} into \eqref{eqn:BVNLH} we obtain
\begin{equation*}
\begin{array}{ll}
&\p_\tau \xi+\frac{c_\tau(p-1+yby)-c yb_\tau y}{(p-1)(p-1+yby)c}V_{ab}\\
=&\frac{4\sum_i(\sum_j b_{ij}y_j)^2}{(p-1)^2(p-1+yby)^2}V_{ab}-\sum_i \frac{2b_{ii}(p-1+yby)-4(\sum_j b_{ij}y_j)^2}{(p-1)(p-1+yby)^2}V_{ab}\\
&+\Delta\xi+\frac{2a yby}{(p-1)(p-1+yby)}V_{ab}- a\sum_i y_i\p_{y_i}\xi-\frac{2a}{p-1}V_{ab}-\frac{2a}{p-1}\xi+|V_{ab}+\xi|^{p-1}(V_{ab}+\xi).
\end{array}
\end{equation*}
It follows that
\begin{equation*}
\begin{array}{lll}
\p_\tau \xi &=& (\Delta-ay\cdot\p_y-\frac{2a}{p-1}+\frac{pc}{p-1+yby})\xi+
|V_{ab}+\xi|^{p-1}(V_{ab}+\xi)-V_{ab}^p-pV_{ab}^{p-1}\xi\\
&&+(\frac{c}{p-1+yby}-\frac{2a}{p-1}+\frac{2a yby}{(p-1)(p-1+yby)}+\frac{4p\sum_i (\sum_j b_{ij}y_j)^2}{(p-1)^2(p-1+yby)^2}
-\frac{2\sum_i b_{ii}}{(p-1)(p-1+yby)}-\frac{c_\tau/c}{p-1}+\frac{yb_\tau y}{(p-1)(p-1+yby)})V_{ab}.
\end{array}
\end{equation*}
Rearranging the terms on the r.h.s.
we obtain the equations \eqref{eqn:FluctuationGauged}-\eqref{eqn:G_1} for $\xi$.

\paragraph{Proof of Lemma \ref{IdDecom}.}
We prove this result by induction in the dimension $n$. For $n=1$,
the result is straightforward since
$1=P_0^{(1)}+P_1^{(1)}+P_2^{(1)}+P_3^{(1)}$.

Assume the statement of the lemma is true for all dimensions $m\leq n-1$ and we will
prove it for dimension $n$. By symmetry we only need to prove it for the
case $k$. We have by assumption
\begin{equation} \label{Decom1}
1=\sum_{\vec{i'}\in J_1^{'}}P_{\vec{i'}}
\end{equation}
where $J_1^{'}\subset I_1^{(n-1)}.$
We claim the following relations
\begin{equation}
\begin{array}{lll}
P_0^{(n)}&=&\sum_{\vec{i'}\in J_1^{'}}P_{\vec{i'}}P_0^{(n)}, \label{P0}
\end{array}
\end{equation}
\begin{equation}\label{P1}
\begin{array}{lll}
P_1^{(n)} & = &P_{0}^{(1)}\cdots
P_{0}^{(n-1)}P_1^{(n)}+\sum_{j=1}^{n-1}P_{0'}^{(1)}\cdots P_{0'}^{(j-1)}P_{1'}^{(j)}
P_0^{(j+1)}\cdots P_0^{(n-1)}P_1^{(n)}\\
&=& P_0^{(1)}\cdots P_0^{(n-1)}P_1^{(n)} +
\sum_{k<l} P_{0'}^{(1)}\cdots P_{0'}^{k-1}P_{1'}^{(k)}P_0^{(k+1)}\cdots P_0^{(l-1)}
P_{1'}^{(l)}P_0^{(l+1)}\cdots P_0^{(n-1)}P_1^{(n)} \\
&&+ \sum_{k=1}^{n-1}\sum_{l=1,2'} P_0^{(1)}\cdots P_0^{(k-1)}P_{l}^{(k)}P_0^{(k+1)}\cdots P_0^{(n-1)}P_1^{(n)},
\end{array}
\end{equation}
\begin{equation}We
P_2^{(n)}
=P_{0}^{(1)}\cdots
P_{0}^{(n-1)}P_2^{(n)}+\sum_{j=1}^{n-1}P_{0'}^{(1)}\cdots P_{0'}^{(j-1)}P_{1'}^{(j)}
P_0^{(j+1)}\cdots P_0^{(n-1)}P_2^{(n)},\label{P2}
\end{equation}
\begin{equation}
\begin{array}{lll}
P_3^{(n)}&=&P_{0'}^{(1)}\cdots P_{0'}^{(n-1)}P_3^{(n)}.
\label{P3}
\end{array}
\end{equation}
In fact, \eqref{P0} follows directly from \eqref{Decom1}, and \eqref{P3} is trivial. Moreover, using the second relation in \eqref{eqn:projectdecom} we obtain
\begin{equation}\label{eqn:decompose1}
\begin{array}{lll}
&&1=P_{0'}^{(1)}\cdots P_{0'}^{(n-1)}=P_{0'}^{(1)}\cdots P_{0'}^{(n-2)}(P_{0}^{(n-1)}+P_{1'}^{(n-1)})\\
&=&P_{0'}^{(1)}\cdots P_{0'}^{(n-2)}P_{0}^{(n-1)} + P_{0'}^{(1)}\cdots P_{0'}^{(n-2)}P_{1'}^{(n-1)}\\
&=&P_{0'}^{(1)}\cdots P_{0'}^{(n-3)}(P_{0}^{(n-2)}+P_{1'}^{(n-2)})P_{0}^{(n-1)} +P_{0'}^{(1)}\cdots P_{0'}^{(n-2)}P_{1'}^{(n-1)}\\
&=& \cdots \cdots\\
&=& P_0^{(1)}\cdots P_0^{(n-1)}+\sum_{j=1}^{n-1}P_{0'}^{(1)}\cdots P_{0'}^{(j-1)}P_{1'}^{(j)}P_{0}^{(j+1)}\cdots P_{0}^{(n-1)},
\end{array}
\end{equation}
and
\begin{equation}\label{eqn:decompose2}
\begin{array}{lll}
&& P_{0'}^{(1)}\cdots P_{0'}^{(j-1)}P_{1'}^{(j)}=P_{0'}^{(1)}\cdots P_{0'}^{(j-2)}(P_{0}^{(j-1)}+P_{1'}^{(j-1)})P_{1'}^{(j)}\\
&=&P_{0'}^{(1)}\cdots P_{0'}^{(j-2)}P_{0}^{(j-1)}P_{1'}^{(j)}+ P_{0'}^{(1)}\cdots P_{0'}^{(j-2)}P_{1'}^{(j-1)}P_{1'}^{(j)}\\
&=&P_{0'}^{(1)}\cdots P_{0'}^{(j-3)}(P_{0}^{(j-2)}+P_{1'}^{(j-2)})P_{0}^{(j-1)}P_{1'}^{(j)}+
P_{0'}^{(1)}\cdots P_{0'}^{(j-2)}P_{1'}^{(j-1)}P_{1'}^{(j)}\\
&=& \cdots \cdots \\
&=& P_{0}^{(1)}\cdots P_{0}^{(j-1)}P_{1'}^{(j)}+
\sum_{k<j}P_{0'}^{(1)}\cdots P_{0'}^{(k-1)}P_{1'}^{(k)}P_{0}^{(k+1)}\cdots P_{0}^{(j-1)}P_{1'}^{(j)}.
\end{array}
\end{equation}
\eqref{P2} follows readily from \eqref{eqn:decompose1}. Finally, using \eqref{eqn:decompose1} and
\eqref{eqn:decompose2} we arrive at \eqref{P1}.
Thus by \eqref{P0}-\eqref{P3} and the relation $1=P_0^{(n)}+P_1^{(n)}+P_2^{(n)}+P_3^{(n)}$ we find
\begin{equation*}
\begin{array}{lll}
1 &=& \sum_{\vec{i'}\in
J_1^{'}}P_{\vec{i'}}P_0^{(n)}+P_0^{(1)}\cdots P_0^{(n-1)}P_1^{(n)} +
\sum_{k<l} P_{0'}^{(1)}\cdots P_{0'}^{k-1}P_{1'}^{(k)}P_0^{(k+1)}\cdots P_0^{(l-1)}
P_{1'}^{(l)}P_0^{(l+1)}\cdots P_0^{(n-1)}P_1^{(n)} \\
&&+ \sum_{k=1}^{n-1}\sum_{l=1,2'} P_0^{(1)}\cdots P_0^{(k-1)}P_{l}^{(k)}P_0^{(k+1)}\cdots P_0^{(n-1)}P_1^{(n)}+P_{0}^{(1)}P_{0}^{(2)}\cdots
P_{0}^{(n-1)}P_2^{(n)}\\
&&+\sum_{j=1}^{n-1}P_{0'}^{(1)}\cdots P_{0'}^{(j-1)}P_{1'}^{(j)}
P_0^{(j+1)}\cdots P_0^{(n-1)}P_2^{(n)}+P_{0'}^{(1)}\cdots
P_{0'}^{(n-1)}P_3^{(n)}.
\end{array}
\end{equation*}
Therefore we obtain $1=\sum_{\vec{i}\in J_n}P_{\vec{i}}$, where
\begin{equation*}
\begin{array}{ll}
J_n= & \{\vec{i}=(i_1,\cdots,i_{n-1},0)| (i_1,\cdots,i_{n-1})\in
J_1^{'}\}\bigcup \{(0,\cdots,0,k), (0',\cdots,0',3): k=1, 2\}\\
& 
\bigcup_{1 \le k<l \le n-1}\{(i_1, \cdots, i_{n-1}, 1): i_k=i_l=1', i_j=0'\ \text{if}\ j<k,
i_j=0\ \text{if}\ k<j<l\ \text{or}\ l<j<n\}\\ 
& 
\bigcup_{k=1}^{n-1} \{(i_1, \cdots, i_{n-1}, 1): i_j=l\delta_{jk} \forall 1\le j\le n-1, l=1,2'\}\\ 
& 
\bigcup_{k=1}^{n-1} \{(i_1, \cdots, i_{n-1}, 2): i_k=1', i_j=0'\ \text{if}\ j<k,
i_j=0\ \text{if}\ k<j<n\}. 
\end{array}
\end{equation*}
Obviously this $J_n$ is a subset of $I_n$. This proves Lemma
\ref{IdDecom}. $\Box$

\paragraph{Proof of \eqref{x_jnorm} in the case $i_j=3$.}
Let $L_0=-\Delta+\alpha x\p_x$. We want to show
\begin{align*}
\norm{\langle x\rangle^{-3}\e^{-rL_0}P_3|x|^3}_{L_\infty\to L_\infty}\lesssim \e^{-3\alpha r}\,.
\end{align*}
Let $U_0(x,y)$ be the integral kernel
of $U_\alpha :=\e^{\frac{\alpha x^2}{4}}\e^{-rL_0}\e^{-\frac{\alpha x^2}{4}}$.
By a standard formula (see \cite{Simon, GlJa}) we have
\begin{equation*}
U_{0}(x,y)=4\pi (1-e^{-2\alpha
r})^{-\frac{1}{2}}\sqrt{\alpha}e^{2\alpha
r}e^{-\alpha\frac{(x-e^{-\alpha r}y)^{2}}{2(1-e^{-2\alpha r})}}.
\label{eqn:96a}
\end{equation*}
Define a new function $f:=e^{-\frac{\alpha y^{2}}{2}}P_3 g$.
The definitions above imply
\begin{equation}\label{FK1}
e^{\frac{\alpha x^{2}}{2}}U_{\alpha}(\sigma+r,\sigma)e^{-\frac{\alpha x^{2}}{2}}P_3 g= e^{-\frac{\alpha x^{2}}{2}}\int U_{0}(x,y)f(y)dy.
\end{equation}
Integrate by parts on the right hand side of \eqref{FK1} to obtain
\begin{equation}\label{Estimateta}
\begin{array}{lll}
e^{\frac{\alpha x^{2}}{2}}U_{\alpha}(\sigma+r,\sigma)e^{-\frac{\alpha x^{2}}{2}}P_3 g
&=&e^{\frac{\alpha x^{2}}{2}}\int \partial_{y}^{3}U_{0}(x,y)f^{(-3)}(y)dy
\end{array}
\end{equation}
where
$f^{(-m-1)}(x):= \int_{-\infty}^{x}f^{(-m)}(y)dy$ and $f^{(-0)}:=f.$
Because $P_3 g\perp y^{m}e^{-\frac{\alpha
y^{2}}{2}},\ m=0,1,2,$ we have that $f\perp 1,\ y,\ y^{2}.$ Therefore by integration by parts we have
$$f^{(-m)}(y)= \int_{-\infty}^{y}f^{(-m+1)}(x)dx=- \int_{y}^{\infty}f^{(-m+1)}(x)dx,\ m=1,2,3.$$
Moreover, by the definition of $f^{(-m)}$ and the equation above we
have
$$|f^{(-m)}(y)|\lesssim
e^{-\frac{\alpha y^{2}}{2}}\langle y\rangle^{3-m}\|\langle
y\rangle^{-3}P_3 g\|_{\infty}.$$
Using the explicit formula for $U_0(x,y)$ given above we find $$|\partial^{k}_{y}U_{0}(x,y)|\lesssim \frac{e^{-\alpha
kr}}{(1-e^{-2\alpha r})^{k}}(|x|+|y|+1)^{k}U_{0}(x,y).$$We

Collecting the estimates above and using Equation
\eqref{Estimateta}, we have the following result
$$
\begin{array}{lll}
& &\langle x\rangle^{-3}|e^{\frac{\alpha x^{2}}{2}} U_{\alpha}(\sigma+r,\sigma)e^{-\frac{\alpha x^{2}}{2}}P_3g(x)|\\
&\lesssim & \frac{1}{(1-e^{-2\alpha r})^{3}}\langle
x\rangle^{-3}e^{\frac{\alpha x^{2}}{2}} \int (|x|+|y|+1)^{3}e^{-3\alpha
r}U_{0}(x,y)|f^{(-3)}(y)|dy\\
&\lesssim &\frac{e^{-3\alpha
r}}{(1-e^{-2\alpha r})^{3}}
e^{\frac{\alpha x^{2}}{2}}\int \langle
x\rangle^{-3}U_{0}(x,y)e^{-\frac{\alpha}{2}y^{2}}\langle
y\rangle^{3}dy \|\langle
y\rangle^{-3}P_3g\|_{\infty}.
\end{array}
$$
$\Box$

\def\cprime{$'$} \def\cprime{$'$}

\end{document}